\NewDocumentCommand{\mynewtheorem}{momo}{%
	\IfValueTF{#4}
	{\newtheorem{#1}{#3}[#4]}
	{%
		\IfValueTF{#2}
		{%
			\AddToHook{env/#1/begin}{\zcsetup{countertype={#2=#1}}}%
			\newtheorem{#1}[#2]{#3}%
		}
		{\newtheorem{#1}{#3}}%
	}%
}
\theoremstyle{plain}
\theoremstyle{definition}
\theoremstyle{remark}
\newtheorem*{remark*}{Remark}
\newtheorem*{notation*}{Notation and Terminology}
\numberwithin{equation}{section}
\DeclareMathOperator{\Ann}{Ann}
\DeclareMathOperator{\Span}{Span}
\DeclareMathOperator{\rk}{rk}
\DeclareMathOperator{\coker}{coker}
\DeclareMathOperator{\Ext}{Ext}
\DeclareMathOperator{\Fit}{Fit}
\DeclareMathOperator{\Hom}{Hom}
\DeclareMathOperator{\SEnd}{\mathcal{E}\!\mathit{nd}}
\DeclareMathOperator{\SExt}{\mathcal{E}\!\mathit{xt}}
\DeclareMathOperator{\SHom}{\mathcal{H}\!\mathit{om}}
\DeclareMathOperator{\id}{id}
\DeclareMathOperator{\Bl}{Bl}
\DeclareMathOperator{\Exc}{Exc}
\DeclareMathOperator{\Gr}{Gr}
\DeclareMathOperator{\Supp}{Supp} 
\DeclareMathOperator{\ch}{ch}
\DeclareMathOperator{\NS}{NS}
\DeclareMathOperator{\CDiv}{CDiv}
\DeclareMathOperator{\Hilb}{Hilb}
\DeclareMathOperator{\Quot}{Quot}
\newcommand{\Romannum}[1]{\uppercase\expandafter{\romannumeral #1}}
\newcommand{\hooklongrightarrow}{\lhook\joinrel\longrightarrow}
\renewcommand{\div}{\operatorname{div}}
\DeclareMathOperator{\red}{red}
\DeclarePairedDelimiter{\abs}{\lvert}{\rvert}
\DeclarePairedDelimiterX{\pair}[2]{\langle}{\rangle}{#1,#2}
\title{Sheaf stable pairs on projective surfaces and birational geometry}
\author{Caucher Birkar}
\address{Yau Mathematical Sciences Center,
	Jingzhai, Tsinghua University, Haidian District,
Beijing, China, 100084}
\email{birkar@tsinghua.edu.cn}
\author{Jia Jia}
\address{Yau Mathematical Sciences Center,
	Jingzhai, Tsinghua University, Haidian District,
Beijing, China, 100084}
\email{jia\_jia@u.nus.edu,mathjiajia@tsinghua.edu.cn}
\author{Artan Sheshmani}
\address{Beijing Institute of Mathematical Sciences and Applications,
	No. 544, Hefangkou Village, Huaibei Town, Huairou District,
Beijing, China, 101408}
\email{artan@bimsa.cn}
\address{Massachusetts Institute of Technology (MIT), IAiFi Institute, 182 Memorial Drive, Cambridge, MA 02139, USA}
\email{artan@mit.edu}
\author{Chengxi Wang}
\address{Yau Mathematical Sciences Center,
	Jingzhai, Tsinghua University, Haidian District,
Beijing, China, 100084}
\email{chxwang@tsinghua.edu.cn}
\date{\today}
\subjclass[2020]{
	14J10, 
	14E30, 
	14H60. 
}
\keywords{sheaf stable pair, moduli space, Quot-scheme, stable minimal model, projective surface}
\begin{document}

\begin{abstract}
	We study moduli space of higher rank marginally stable pairs
	\(\mathcal{E},s\coloneqq (s_1,\cdots,s_r)\) consisting of torsion free coherent sheaf \(\mathcal{E}\) of rank \(r\)
	and \(r\) sections \(s_1, \dots, s_r\) on a smooth projective surface.
	Having fixed the Chern character of \(\mathcal{E}\), the resulting moduli space is isomorphic to some subscheme of
	the Quot-scheme parametrising quotient sheaves of appropriate Chern character.
	We establish a connection between moduli space of higher rank stable pairs and  stable minimal models
	induced by the sheaf \(\mathcal{E}\) and sections \(s_i\) and the relative lc model of base surface,
	and use birational geometry of minimal models to analyse in detail the components of the fibre of the
	Hilbert-Chow morphism from the moduli space to the Hilbert scheme of effective Cartier divisors on the
	base surface.
\end{abstract}

\maketitle

\setcounter{tocdepth}{1}
\tableofcontents

\section{Introduction}

The classification of geometric objects is a central theme in algebraic geometry,
and moduli theory provides a systematic framework for addressing such classification problems,
offering solutions to the geometric classification problems.
Early in the 1960s, Seshadri introduces the notion of \(S\)-equivalence and constructs a projective moduli
space for semi-stable vector bundles on a smooth curve \cite{Seshadri1967},
which gives a compactification for Mumford's moduli space of stable bundles \cite{Mumford1962}.
Around the 70s of the 20th century,
the moduli spaces of sheaves on higher dimensional varieties have been constructed by
Gieseker~\cite{Gieseker1977} for surfaces and Maruyama~\cite{Maruyama1977,Maruyama1978} in general.
The moduli space of decorated sheaves, i.e., sheaves with additional structures or data attached to them,
plays a significant role in modern algebraic geometry.
As a realisation of the stable pairs in the sense of Thaddeus \cite{Thaddeus1994},
Le Potier introduces a type of decoration, \emph{coherent systems}, and establishes the moduli spaces under an
appropriate stability condition \cite{lepotier1993systemes}.
A coherent system is a coherent sheaf on a projective variety decorated with a subspace inside the space of
global sections of that sheaf.
Extensive applications of coherent systems have been made in Brill-Noether theory \cite{Brambila-Paz2008}.
As for higher dimensional contexts, coherent systems are used in surface cases,
as well as in enumerative geometry such as curve counting on Calabi-Yau threefolds
by Pandharipande and Thomas \cite{pandharipande2009curve}, where they were called \emph{stable pairs}.
The new stable pair invariants are in close relation to Gromov-Witten, Donaldson-Thomas and physicists' BPS invariants.
In this paper, we study the moduli space of a kind of coherent systems which are called \emph{sheaf stable pairs}
\zcref{def:sheaf_stable_pairs},
which is introduced by \cite{sheshmani2016higher} as a higher rank analogue of the theory of stable pairs in
\cite{pandharipande2009curve}.

\smallskip

The points of a moduli space represent geometric objects with some fixed invariants,
or isomorphism classes of some fixed kind.
Analysing the geometric structure of moduli spaces produces effective tools and methods to elucidate the
difference and connection between geometric objects.
In \cite{lepotier1993faisceaux}, Le Potier explored the moduli spaces of semi-stable sheaves on the projective
plane and on the projective space \(\mathbb{P}^3\) with fixed Chern classes or fixed Hilbert polynomial.
He provided geometric description of the Barth morphisms from moduli space of semi-stable sheaves to projective
space parametrising quartic curves, and further studied the irreducible components of the moduli space.
An often instrumental approach in studying the moduli spaces is by connecting them to other parameter spaces.

\smallskip

Varieties of general type, Calabi-Yau varieties and Fano varieties are three fundamental building blocks in
complex algebraic varieties.
In the language of differential geometry, they roughly correspond to negative,
zero and positive curvature spaces.
The moduli theories for varieties of
general type and for Fano varieties have been established and relatively well-understood.

\smallskip

Early in the 1960s, the moduli spaces of smooth projective curves of fixed genus \(g\geq 2\) were compactified
by introducing stable nodal curves as good limits \cite{DeligneMumford1969} to get the coarse moduli space
\(\overline{M_g}\) of Deligne-Mumford stable curves.
The theory was extended to surfaces of general type in \cite{KollarShepherd-Barron1988}.
Then Alexeev \cite{Alexeev1996} constructed analogues of \(M_{g,n}\) and \(M_{g,n}(W)\) for surfaces,
where the \(M_{g,n}\) parametrize stable \(n\)-pointed curves and
\(M_{g,n}(W)\) is a moduli of stable maps from reduced curves to a variety \(W\)
studied by Kontsevich in enumerative geometry of curves \cite{Kontsevich1995}.
Starting from pairs \((X,B)\) with surface \(X\) and a divisor \(B\) such that \(K_X+B\) is ample,
and then applying Log Minimal Model Program in dimension \(3\) and semi-stable reduction,
one may get semi-log canonical singularities.
Then the so-called KSBA-stable pairs of general type, as well as stable maps of pairs,
are defined \cite[Definition~2.1.]{Alexeev1996}.
A KSBA-stable pair of general type consists of connected projective surface \(X\) and a divisor \(B\) with
reduced but not necessarily irreducible components such that \((X,B)\) has semi-log canonical singularities and
\(K_X+B\) is ample.
Heavily based on progress in birational geometry, especially, on existence of minimal models
\cite{BirkarCasciniHaconMcKernan2010} and boundedness of varieties of general type \cite{HaconMcKernanXu2018},
this moduli theory has been generalized to higher dimension completely due to a lot of people
(see \cite{kollar2022,Alexeev96}).

\smallskip

In \cite{birkar2022moduli}, the first author defines the stable varieties of arbitrary non-negative Kodaira
dimension that generalises both KSBA-stable varieties of general type and polarised Calabi-Yau varieties,
establish their boundedness under natural assumptions, and then construct their projective coarse moduli spaces.
The stable objects defined in \cite{birkar2022moduli} are called \emph{stable minimal models}
(see \zcref{def:lc_stable_minimal_model}) which contain various kinds of geometric
objects since they involve stable varieties of arbitrary non-negative Kodaira dimensions.

\medskip

It is therefore natural to seek relations between moduli spaces parametrising different types of geometric objects.
In \cite{birkar2024sheaf}, a bridge is established between stable sheaf pairs and stable minimal models.
In the present paper we study this relation in detail on smooth projective surfaces.
The explicit construction of this bridge involves birational operations such as running the minimal model program,
taking log canonical models, and performing Maruyama’s elementary transformations (\zcref{sec:geometric_treatment}).
This allows us to analyse the global geometry of the moduli space of sheaf stable pairs,
describe its components, interpret it as a fibre bundle, and study its fibres
(\zcref{sec:pic_one_surfaces,sec:negative_curves}).

Since sheaf stable pairs supported on surfaces have been extensively investigated in enumerative geometry,
the results of this paper suggest further connections between enumerative and birational geometry.
Our work provides a new perspective and lays the groundwork for a series of subsequent developments.

\medskip

We now recall the basic notions and state the main results.
We begin with the definition of sheaf stable pairs.
This notion arises as a special case of marginal stability for a parameter-dependent stability condition
associated to a sheaf pair (cf.~\cite[Section~3]{birkar2024sheaf}).

\begin{definition}[{\cite[Definitions~1.1, 3.5]{birkar2024sheaf}}]\label{def:sheaf_stable_pairs}
	Let \(Z\) be an algebraic variety.
	A \emph{sheaf stable pair} \(\mathcal{E},s\) on \(Z\) consists of a torsion-free sheaf \(\mathcal{E}\) of rank \(r>0\)
	and a morphism of \(\mathcal{O}_{Z}\)-modules \(\mathcal{O}_{Z}^r\xrightarrow{s}\mathcal{E}\)
	such that \(\dim\Supp\coker(s)<\dim Z\).
\end{definition}

Two stable pairs \(\mathcal{E},s\) and \(\mathcal{G},t\) are said to be equivalent if there exists a commutative diagram
\[
	\begin{tikzcd}
		\mathcal{O}_{Z}^r \ar[r,"s"] \ar[d,equal] & \mathcal{E} \ar[d,"\cong"]\\
		\mathcal{O}_{Z}^r \ar[r,"t"'] & \mathcal{G}
	\end{tikzcd}
\]
The equivalence class of \(\mathcal{E},s\) is denoted by \([\mathcal{E},s]\).

For a smooth projective variety \(Z\),
we denote by \(M_Z(\ch)\) the moduli space of stable pair classes \([\mathcal{E},s]\)
with Chern character \(\ch(\mathcal{E})=\ch\) on \(Z\).
It is known that \(M_Z(\ch)\) is a projective scheme (\cite{lepotier1993faisceauxb} and
\cite[Lemma~3.7]{sheshmani2016higher}).

\smallskip

Fix a smooth projective variety \(Z\).
Let \(\CDiv(Z)\) be the subscheme of the Hilbert scheme \(\Hilb(Z)\) parametrizing effective Cartier divisors on \(Z\)
(cf.~\cite[Chapter I, Definition~1.12 and Theorem~1.13]{kollar1996rational}).
There is a set-theoretic map
\[
	M_Z(\ch)\longrightarrow\CDiv(Z)
\]
sending a sheaf stable pair \(\mathcal{E},s\) to the effective  Cartier divisor \(\mathcal{Z}_f(\coker s)\)
(see \zcref{def:annihilator_support,cor:fitting_support_cartier}).
When this map is a well-defined morphism of projective schemes,
we can study the geometry of \(M_Z(\ch)\) via the fibres of this Hilbert-Chow morphism.

\begin{remark}
	Let \(Z\) be a smooth projective surface of Picard number \(1\).
	Let \(H\) be an ample generator of the N\'eron-Severi group of \(Z\).
	For any Cartier divisor \(D\) on \(Z\) we define its \emph{degree} by
	\[
		\deg(D)\coloneqq D\cdot H.
	\]
	In particular, for any \(d\geq 1\),
	the complete linear system \(\abs{dH}\) consists of all effective divisors of degree \(dH^2\).
	Therefore, \(\CDiv^d(Z)=\mathbb{P}(H^{0}(Z,\mathcal{O}_{Z}(dH)))\) is smooth.

	By contrast, let \(Z'\) be another smooth projective surface
	and let \(C\subseteq Z'\) be an irreducible curve.
	Fix a very ample divisor \(H'\) and let
	\[
		P(m)=\chi(\mathcal{O}_{nC}(mH'))
	\]
	be the Hilbert polynomial of the structure sheaf of the multiple \(nC\) with respect to \(H'\).
	Here \(nC\) is the subscheme of \(Z'\) with ideal sheaf \(\mathcal{O}_{Z'}(-nC)\).
	If \(C\) is a \emph{negative curve} (i.e., \(C^2<0\)),
	then the normal bundle of the multiple \(nC\) satisfies
	\[
		H^0(Z',\mathcal{N}_{nC/Z'})=0,
	\]
	so the Zariski tangent space to the scheme \(\CDiv^P(Z')\) at \([nC]\) is trivial.
	Consequently, the irreducible component of \(\CDiv^P(Z')\) containing \([nC]\) consists of a single point.
\end{remark}

In this paper we mainly consider sheaf stable pairs on smooth projective surfaces of Picard number \(1\)
in \zcref{sec:pic_one_surfaces} and
sheaf stable pairs whose first Chern class is a multiple of a negative rational curve
on a smooth projective surface in \zcref{sec:negative_curves}.

\medskip

We first describe the moduli space in the Picard number one case by analysing the fibres of the Hilbert–Chow morphism.

\begin{theorem}\label{thm:moduli_space_picard_one}
	Let \(Z\) be a smooth projective surface of Picard number one
	and let \(H\) be an ample generator of \(\NS(Z)\).
	Consider an element \([\mathcal{E},s]\in M_Z(r,dH,d^2H^2/2)\),
	i.e., a sheaf stable pair \(\mathcal{E},s\) with rank \(\ch(\mathcal{E})=(r,dH,d^2H^2/2)\).
	Then \(\mathcal{E}\) is locally free.
	Moreover, there exists a Hilbert-Chow morphism
	\[
		h\colon M_Z(r,dH,d^2H^2/2)\longrightarrow \CDiv^d(Z)
	\]
	whose fiber over any point \([n_iC_i]\) is isomorphic to \(\mathbb{P}^{r-1}\).
\end{theorem}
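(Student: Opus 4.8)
The plan is to first pin down the individual objects (showing $\mathcal{E}$ is locally free), then construct the morphism via Fitting ideals, and finally compute the fibre by passing to the dual, which identifies it with a fibre of the Quot-scheme description. Throughout, the hypothesis $\ch(\mathcal{E})=(r,dH,d^2H^2/2)$ is equivalent to $c_2(\mathcal{E})=0$, and the Picard number one assumption will enter through positivity of intersection numbers of effective classes.

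\textbf{Local freeness.} Since $\dim\Supp\coker(s)\le1$ and $\mathcal{O}_Z^r,\mathcal{E}$ both have rank $r$, the map $s$ is injective with torsion cokernel $\mathcal{Q}$, giving $0\to\mathcal{O}_Z^r\to\mathcal{E}\to\mathcal{Q}\to0$. Embedding $\mathcal{E}\hookrightarrow\mathcal{E}^{**}$ (reflexive, hence locally free on a smooth surface) with $T=\mathcal{E}^{**}/\mathcal{E}$ of finite length $\ell$, the standard formula gives $c_2(\mathcal{E}^{**})=c_2(\mathcal{E})-\ell=-\ell\le0$. I would then establish the reverse inequality $c_2(\mathcal{E}^{**})\ge0$ by induction on the rank: the first section cuts out a line subbundle $L_1\subseteq\mathcal{E}^{**}$ with effective $c_1$, the remaining $r-1$ sections remain generically independent in $\mathcal{E}^{**}/L_1$, and Whitney's formula $c_2(\mathcal{E}^{**})=c_2(\mathcal{E}^{**}/L_1)+c_1(L_1)\cdot c_1(\mathcal{E}^{**}/L_1)$ reduces matters to lower rank; here Picard number one guarantees that the two effective classes are nonnegative multiples of $H$, so their product is $\ge0$. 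Hence $\ell=0$ and $\mathcal{E}=\mathcal{E}^{**}$ is locally free.

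\textbf{The morphism and the fibre.} With $\mathcal{E}$ locally free, $\det(s)\colon\mathcal{O}_Z\to\det\mathcal{E}$ has zero scheme cut out by $\Fit_0(\mathcal{Q})$, which by \zcref{cor:fitting_support_cartier} is an effective Cartier divisor $D$ in the class $dH$, so the assignment lands in $\CDiv^d(Z)=|dH|$; to make it a morphism I would take the relative zeroth Fitting ideal of the universal cokernel over the moduli space and use compatibility of Fitting ideals with base change. For the fibre over a fixed $D=\sum_i n_iC_i$, dualizing the sequence and using that $\mathcal{E}^{*}$ is locally free produces a surjection $\mathcal{O}_Z^r\twoheadrightarrow\mathcal{C}$ with $\mathcal{C}$ pure of dimension one (purity from $\SExt^2(\mathcal{C},\mathcal{O}_Z)\cong\SExt^1(\mathcal{E}^{*},\mathcal{O}_Z)=0$), $\Fit_0(\mathcal{C})=\mathcal{I}_D$, and $\ch(\mathcal{C})=\ch(\mathcal{O}_D)$; this is precisely the Quot-scheme model. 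Since $\mathcal{I}_D$ annihilates $\mathcal{C}$, it is a globally generated pure $\mathcal{O}_D$-module of generic length $n_i$ along $C_i$ with $\chi(\mathcal{C})=\chi(\mathcal{O}_D)$, and on the connected divisor $D$ such a sheaf must be $\mathcal{O}_D$. As $H^0(D,\mathcal{O}_D)$ is one-dimensional, $\Hom(\mathcal{O}_Z^r,\mathcal{O}_D)$ is $r$-dimensional, a homomorphism is surjective exactly when its $r$ coordinates are not all zero, and two define the same quotient iff they are proportional; so the fibre is $\mathbb{P}^{r-1}$, an identification I would make scheme-theoretic via the universal quotient. (Concretely $\mathcal{E}\cong\mathcal{O}_Z^{r-1}\oplus\mathcal{O}_Z(dH)$ on the whole fibre.)

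\textbf{Main obstacle.} The delicate step is the fibre computation, namely proving $\mathcal{C}\cong\mathcal{O}_D$ uniformly for \emph{all} $D$, including non-reduced and reducible ones: I must exclude other globally generated pure sheaves sharing the Fitting support and Euler characteristic, and verify that $H^0(D,\mathcal{O}_D)$ is one-dimensional for every multiple curve $D=\sum_i n_iC_i$. This is exactly where Picard number one is indispensable, since it forces the connectivity of $D$ and the vanishing $H^0(C_i,\mathcal{O}_{C_i}(-jC_i))=0$ for $j\ge1$ that controls $H^0(\mathcal{O}_D)$.
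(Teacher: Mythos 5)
Your argument for local freeness is a genuine alternative to the paper's: the paper never argues directly on $\mathcal{E}$, but instead deduces local freeness from a complete classification of the quotients $\mathcal{O}_Z^r\to\mathcal{L}$ in the Quot-scheme (\zcref{prop:moduli_quot_isomorphism} shows every such $\mathcal{L}$ is $\mathcal{O}_C$, hence pure, and then \zcref{rmk:locally_free_equivalence} gives local freeness). Your route via $c_2(\mathcal{E}^{**})=-\ell\le 0$ together with the inductive Whitney-formula bound $c_2(\mathcal{E}^{**})\ge 0$ (using saturation of the first section and the fact that on a Picard-number-one surface effective classes are non-negative multiples of $H$) is correct and arguably more elementary; the Kodaira-vanishing observation that $h^0(\mathcal{O}_D)=1$ for every $D\in|dH|$ is also a clean way to justify a point the paper leaves implicit in \zcref{thm:hc_fibre_pic_one}.

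However, there is a genuine gap, and it sits exactly where you placed your ``main obstacle'': the assertion that a globally generated pure one-dimensional sheaf $\mathcal{C}$ with $\Fit_0(\mathcal{C})=\mathcal{I}_D$ and $\chi(\mathcal{C})=\chi(\mathcal{O}_D)$ must be $\mathcal{O}_D$ is not a formal consequence of those three properties, and you do not supply the argument. This classification is the technical heart of the paper's proof (\zcref{prop:moduli_quot_isomorphism}): for $D=nC$ one takes the upper filtration $0=\mathcal{F}^0\subsetneq\cdots\subsetneq\mathcal{F}^m=\mathcal{C}$ with quotients $\mathcal{R}_i$ of ranks $r_i$ on $C$, uses the two families of surjections $\mathcal{R}_i(-C)\twoheadrightarrow\mathcal{R}_{i-1}$ (from the $\mathcal{O}_{nC}$-module structure) and $\mathcal{O}_C^r(-(m-i)C)\twoheadrightarrow\mathcal{R}_i$ (from the $r$ generators) to get the degree bounds $\deg\mathcal{R}_i\ge-(m-i)r_iC^2$, and then the combinatorial inequality of \zcref{lem:main_ineq} forces $r_i=1$, equality of degrees, $\mathcal{R}_i\simeq\mathcal{O}_C(-(m-i)C)$ by \zcref{lem:torsion_free_sheaf_pullback}, and finally an inductive extension argument identifying $\mathcal{C}$ with $\mathcal{O}_{nC}$. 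None of these ingredients (the filtration, the twist-by-$-C$ surjections, the numerical lemma) appear in your sketch, and without them one cannot rule out, say, non-split pure extensions of globally generated sheaves on $nC$ with the prescribed $\chi$. So your proposal correctly reduces the theorem to this classification and correctly identifies it as the hard step, but the proof of that step is missing; as written, the fibre computation is incomplete.
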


In what follows, let \(Z\) be a smooth projective surface and
let \(C\) be a rational curve on \(Z\) with \(C^2=-d\) for some \(d>0\).
We then describe the moduli space of sheaf stable pairs as follows.

\begin{theorem}[=\zcref{thm:moduli_space_d1}]
	If \(d=1\), the moduli space \(M_Z(2,2C,-2)\) is isomorphic to the \(\mathbb{P}^2\)-bundle
	\(\mathbb{P}_{\mathbb{P}^1}(\mathcal{O}(2)^{\oplus 2}\oplus\mathcal{O})\).
\end{theorem}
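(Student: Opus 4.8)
The plan is to realise \(M_Z(2,2C,-2)\) as a single fibre of the Hilbert–Chow morphism and then to compute that fibre by passing to a Quot-scheme and performing an explicit invariant-theoretic quotient. First I would record the numerics: from \(\ch(\mathcal{E})=(2,2C,-2)\) and \(C^2=-1\) one gets \(c_2(\mathcal{E})=0\), and since \(C\) is a \((-1)\)-curve the class \(2C\) is rigid — indeed \(h^0(\mathcal{O}_Z(2C))=1\) follows from the two restriction sequences along \(C\), using \(\mathcal{O}_Z(C)|_C=\mathcal{O}_{\mathbb{P}^1}(-1)\). Hence \(2C\) is the unique effective divisor in its class, the support divisor \(\mathcal{Z}_f(\coker s)\) of every pair equals the ribbon \(2C\), and \(M_Z(2,2C,-2)\) is exactly the fibre \(h^{-1}([2C])\). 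As in \zcref{thm:moduli_space_picard_one} I would first establish that every such \(\mathcal{E}\) is locally free and that \(s\) is injective with torsion cokernel \(Q\coloneqq\coker s\) scheme-theoretically supported on \(2C\) (i.e. annihilated by \(\mathcal{I}_C^2\)), giving \(0\to\mathcal{O}_Z^2\xrightarrow{s}\mathcal{E}\to Q\to 0\) with \(\ch(Q)=(0,2C,-2)\).

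Next I would linearise the problem by dualising. Applying \(\SHom(-,\mathcal{O}_Z)\) to this sequence and using that \(\mathcal{E}\) is locally free while \(Q\) is torsion of codimension one yields \(0\to\mathcal{E}^\vee\to\mathcal{O}_Z^2\to Q'\to 0\) with \(Q'\coloneqq\SExt^1(Q,\mathcal{O}_Z)\), and a Chern character computation gives \(\ch(Q')=(0,2C,2)=\ch(\mathcal{O}_{2C})\). The assignment \([\mathcal{E},s]\mapsto(\mathcal{O}_Z^2\twoheadrightarrow Q')\) is reversible by dualising back, so it identifies \(M_Z(2,2C,-2)\) with the locus of quotients \(\mathcal{O}_Z^2\twoheadrightarrow Q'\) of Chern character \((0,2C,2)\) supported on \(2C\), in line with the Quot-scheme description recalled above; as such \(Q'\) is an \(\mathcal{O}_{2C}\)-module, these are quotients of \(\mathcal{O}_{2C}^{\oplus 2}\).

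The heart of the argument is the explicit computation on the open locus where \(Q'\cong\mathcal{O}_{2C}\). Here a quotient is a pair \((\phi_1,\phi_2)\in H^0(\mathcal{O}_{2C})^{\oplus2}\) generating \(\mathcal{O}_{2C}\), taken modulo \(\Aut(\mathcal{O}_{2C})=H^0(\mathcal{O}_{2C}^\times)\). I would use the conormal sequence \(0\to\mathcal{O}_{\mathbb{P}^1}(1)\to\mathcal{O}_{2C}\to\mathcal{O}_{\mathbb{P}^1}\to0\), coming from \(\mathcal{I}_C/\mathcal{I}_C^2=\mathcal{O}_Z(-C)|_C=\mathcal{O}_{\mathbb{P}^1}(1)\), which splits off the constants to give \(H^0(\mathcal{O}_{2C})=\mathbb{C}\cdot 1\oplus H^0(\mathcal{O}_{\mathbb{P}^1}(1))\) and \(\mathcal{O}_{2C}^\times\cong\mathbb{C}^\ast\times H^0(\mathcal{O}_{\mathbb{P}^1}(1))\). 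Writing \(\phi_i=a_i\cdot 1+\nu_i\), the generating condition is \((a_1,a_2)\neq(0,0)\), a unit \(c(1+n)\) acts by \(a_i\mapsto ca_i\) and \(\nu_i\mapsto c(\nu_i+a_in)\), and the invariants are the ratio \([a_1:a_2]\) together with \(w\coloneqq a_2\nu_1-a_1\nu_2\in H^0(\mathcal{O}_{\mathbb{P}^1}(1))\), which transforms with weight \(2\). This exhibits the open locus as the total space of \(\mathcal{O}_{\mathbb{P}^1}(2)^{\oplus2}\) over the base \(\mathbb{P}^1=\{[a_1:a_2]\}\), the twist \(\mathcal{O}(2)\) arising from the weight discrepancy between \(a_i\) and \(\nu_i\).

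The remaining and hardest step is to compactify and globalise. I expect that sending \(w\to\infty\) corresponds precisely to the degenerations where \(Q'\) fails to be locally free on the ribbon, and that these boundary quotients sweep out the section at infinity \(\mathbb{P}(\mathcal{O}_{\mathbb{P}^1}(2)^{\oplus2})\), assembling the open total space into \(\mathbb{P}_{\mathbb{P}^1}(\mathcal{O}(2)^{\oplus2}\oplus\mathcal{O})\). Turning this set-theoretic picture into an isomorphism of schemes is the main obstacle: I would construct the universal quotient over \(\mathbb{P}_{\mathbb{P}^1}(\mathcal{O}(2)^{\oplus2}\oplus\mathcal{O})\), obtain a morphism to \(M_Z(2,2C,-2)\) from the Quot universal property, check it is bijective on points, and promote it to an isomorphism using smoothness of the Quot-scheme here by matching tangent spaces, both of dimension \(3\). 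One must also exclude any further components — for instance quotients generically of rank \(2\) on the reduced curve \(C\), or sheaves with embedded points — which I would rule out using the rigidity of \(2C\) in \(\CDiv\) together with the irreducibility of the bundle.
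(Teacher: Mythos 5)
Your computation on the open locus is correct and is essentially the paper's \zcref{lem:quotient_to_o2c} in an invariant-theoretic guise: the weight-two invariant \(w=a_2\nu_1-a_1\nu_2\) reproduces exactly the \(\mathcal{O}_{\mathbb{P}^1}(-2)^{\oplus2}\) transition functions found there. But the proposal has a genuine gap at its first structural step. You assert that, as in \zcref{thm:moduli_space_picard_one}, every \(\mathcal{E}\) in \(M_Z(2,2C,-2)\) is locally free. That argument rests on the slope estimates of \zcref{lem:main_ineq}, which require \(C^2>0\), and it fails for a \((-1)\)-curve: the non-locally-free pairs form a nonempty locus \(M_f'\simeq\Quot(\mathcal{O}_C^2,1)\simeq\mathbb{P}^1\times\mathbb{P}^1\) (\zcref{prop:M_f'_and_quot}), and this locus is precisely the section at infinity you are trying to attach. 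If every \(\mathcal{E}\) were locally free, the moduli space would equal the affine bundle you computed, contradicting its properness --- a tension visible in your own text, since you later invoke ``degenerations where \(Q'\) fails to be locally free.'' Relatedly, the identification \([\mathcal{E},s]\mapsto(\mathcal{O}_Z^2\twoheadrightarrow Q')\) with \(Q'=\SExt^1(\mathcal{Q},\mathcal{O}_Z)\) of Chern character \((0,2C,2)\) is only valid on the locally free locus; when \(\mathcal{E}\) is not locally free one must track \(\SExt^1(\mathcal{E},\mathcal{O}_Z)\) and an \(\SExt^2\) of the torsion part, which is what \zcref{prop:relation_quot_scheme} and \zcref{lem:F_two_cases} handle (the image sheaf can be \(\mathcal{O}_C^2\), of Chern character \((0,2C,1)\), not \((0,2C,2)\)).

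The second gap is the one you flag yourself: the compactification. Knowing the open part is the total space of \(\mathcal{O}(-2)^{\oplus2}\) does not determine the compactification. What must be proved is (i) that the boundary is exactly \(\Quot(\mathcal{O}_C^2,1)\) and consists precisely of limits of locally free pairs, and (ii) that \(M_Z(2,2C,-2)\) is smooth and irreducible along this boundary. Point (ii) is not formal: for the identical setup with \(d\geq 2\) the moduli space is \emph{reducible}, with two smooth components crossing along \(\Gamma\) (\zcref{thm:main_negative_curve}), so any argument must detect why \(d=1\) is special. The paper does this by writing explicit local equations in \(\Gr(3,6)\) near a boundary point (\zcref{lem:smooth_oc_ocp}), finding two smooth branches and checking that the moduli space is locally the single branch containing \(M_f\), together with an explicit limit computation (\zcref{lem:limit_of_gamma}) identifying \(\overline{M_f}\setminus M_f\) with \(\Gamma\). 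Your proposed route --- universal family, bijectivity on points, tangent-space comparison --- could in principle substitute for this, but the tangent-space computation at a boundary point is exactly the hard content and cannot be discharged by appealing to ``smoothness of the Quot-scheme here,'' which is precisely what is at stake.
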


\begin{theorem}[=\zcref{thm:main_negative_curve}]
	For \(d\geq 2\), the moduli space \(M_Z(2,2C,-2d)\) has two smooth irreducible components:
	one is isomorphic to \(M_1=\mathbb{P}_{\mathbb{P}^1}(\mathcal{O}(2)^{\oplus d+1}\oplus\mathcal{O})\),
	another is isomorphic to \(M_2=\Quot(\mathcal{O}_{C}^2,d)\).
	Moreover, \(M_1\) and \(M_2\) intersects along a copy of \(\mathbb{P}^d\times\mathbb{P}^1\),
	which is the section at infinity of the \(\mathbb{P}^{d+1}\)-bundle \(M_1\to\mathbb{P}^1\).
\end{theorem}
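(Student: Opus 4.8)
The plan is to fix the invariants, reduce the whole problem to a study in a neighbourhood of $C$, and then split the moduli space according to whether the sheaf is locally free. First I would record the numerics: for $[\mathcal{E},s]\in M_Z(2,2C,-2d)$ the section map is injective with torsion cokernel $Q=\coker(s)$, and $\ch(Q)=\ch(\mathcal{E})-(2,0,0)=(0,2C,-2d)$. Since $C$ is irreducible with $C^2=-d<0$ it is rigid, so the only effective divisor in the class $2C$ is $2C$ itself; hence the Fitting support $\mathcal{Z}_f(\coker s)$ equals $2C$ for every pair, and the Hilbert–Chow morphism is constant. Thus the entire moduli space is a single Hilbert–Chow fibre, set-theoretically concentrated along $C$, with all deformations governed by the normal bundle $N_{C/Z}=\mathcal{O}_{\mathbb{P}^1}(-d)$. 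I would then fix the short exact sequence $0\to\mathcal{O}_Z^2\xrightarrow{s}\mathcal{E}\to Q\to0$ as the object to be classified.

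Next I would carry out the dichotomy, noting that the hypothesis $d\geq 2$ is exactly what allows non-locally-free sheaf stable pairs to occur, in contrast with the $d=1$ case. On the locus where $\mathcal{E}$ is locally free I would realise it as an elementary (Maruyama) modification of $\mathcal{O}_Z^2$ along the double curve $2C$ and present it through an extension of $\mathcal{O}_Z$ by $\mathcal{O}_Z(2C)$; pushing the relevant extension-and-section data forward to $C\cong\mathbb{P}^1$ and reading off the twists from the cohomology of $\mathcal{O}_{\mathbb{P}^1}(-d)$ should identify this locus with $M_1=\mathbb{P}_{\mathbb{P}^1}(\mathcal{O}(2)^{\oplus d+1}\oplus\mathcal{O})$. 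Here the base $\mathbb{P}^1=\mathbb{P}(V)$ records the distinguished line in the two-dimensional space $V=\langle s_1,s_2\rangle$ along which the restricted sections $s|_C$ degenerate, the $\mathcal{O}(2)$-twists arise as the square of the tautological $\mathcal{O}(1)$ on $\mathbb{P}(V)$ coming from the second-order (double-curve) contribution, and the $\mathbb{P}^{d+1}$ fibre records the remaining modification data.

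On the complementary locus, where $\mathcal{E}$ fails to be locally free, I would show the pair factors through the reduced curve and that the failure of local freeness concentrates a surjection $\mathcal{O}_C^2\twoheadrightarrow\mathcal{G}$; identifying the associated universal quotient should give an isomorphism of this locus with $M_2=\Quot(\mathcal{O}_C^2,d)$. I would then establish smoothness and irreducibility of both pieces—$M_1$ is a projective bundle over $\mathbb{P}^1$ and $M_2$ is a Quot scheme of quotients on a smooth rational curve—and a deformation-theoretic ($\Ext$-group) computation for sheaf stable pairs should rule out further components and embedded structure. Finally, matching the limits coming from the two constructions, I would show the overlap is exactly where the modification degenerates, namely the section at infinity $\mathbb{P}(\mathcal{O}(2)^{\oplus d+1})\cong\mathbb{P}^d\times\mathbb{P}^1$ of $M_1\to\mathbb{P}^1$, using that projectivisation is insensitive to the twist by $\mathcal{O}(2)$.

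The main obstacle I expect is twofold. First, producing the precise vector bundle—with the correct $\mathcal{O}(2)$-twists and rank $d+2$—whose projectivisation is $M_1$; this demands a genuinely relative analysis of extensions and sections over $C$ built from $N_{C/Z}=\mathcal{O}(-d)$, rather than a pointwise dimension count, since the twists are what encode the global bundle structure. Second, upgrading the set-theoretic identifications of the non-locally-free locus with $\Quot(\mathcal{O}_C^2,d)$ and of the overlap with $\mathbb{P}^d\times\mathbb{P}^1$ to \emph{scheme-theoretic} isomorphisms, i.e. constructing the universal families and checking that the two scheme structures agree along the common boundary. I expect the birational bridge to stable minimal models—running the minimal model program and taking the relative lc model of the pair over $2C$—to be the organising tool that makes the modifications and the boundary limits uniform across both components.
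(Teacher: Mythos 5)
Your overall decomposition coincides with the paper's: split $M_Z(2,2C,-2d)$ according to whether $\mathcal{E}$ is locally free, identify the non‑locally‑free locus with $\Quot(\mathcal{O}_C^2,d)$ via the dual/Quot correspondence, identify the locally free locus with a bundle over $\mathbb{P}^1=\mathbb{P}(\langle s_1,s_2\rangle)$, and match the two along a boundary. But there is a genuine gap at the centre of the argument. The locally free locus is \emph{not} $M_1$: it is only the open $\mathbb{A}^{d+1}$-bundle, namely the total space of $\mathcal{O}_{\mathbb{P}^1}(-2)^{\oplus(d+1)}$, and the section at infinity $\mathbb{P}^d\times\mathbb{P}^1$ consists entirely of pairs with $\mathcal{E}$ \emph{not} locally free (they live inside $\Quot(\mathcal{O}_C^2,d)$). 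So the whole difficulty is to compute the closure of the locally free locus inside the moduli space, show that this closure is smooth along its boundary, and show the boundary is exactly $\mathbb{P}^d\times\mathbb{P}^1$. Your proposal defers this to ``matching the limits'' without a mechanism. The paper does it by identifying $H^0(\mathcal{O}_{2C})\cong k[x_0,\dots,x_d]/(x_0,\dots,x_d)^2$ (the infinitesimal extension splits since the relevant $H^1$ on $\mathbb{P}^1$ vanishes), parametrising surjections $\mathcal{O}_{2C}^2\to\mathcal{O}_{2C}$ by a pair $(e,h)$ with one component invertible, reading off the transition function of $\mathcal{O}(-2)^{\oplus(d+1)}$, explicitly degenerating the kernel generators as the fibre coordinates go to infinity to land on $\Gamma\simeq\mathbb{P}^1\times\mathbb{P}^d$, and finally writing local equations in $\Gr(d+2,2d+4)$ that exhibit exactly two smooth branches through each point of $\Gamma$. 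Your substitute for this last step --- ``a deformation‑theoretic $\Ext$-computation to rule out further components'' --- is not the right tool: further components are already excluded by the set‑theoretic dichotomy, and what must actually be proved is the local scheme structure (two smooth components, no embedded structure) at points of the intersection, which is where a tangent‑space count alone cannot distinguish a reduced normal crossing from something worse.

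Two smaller points. Your claim that $d\geq 2$ is ``exactly what allows non‑locally‑free pairs to occur'' is incorrect: they occur for $d=1$ as well ($\Theta_1\simeq\Quot(\mathcal{O}_C^2,1)\simeq C\times\mathbb{P}^1$); what changes at $d\geq 2$ is dimensional, since $\dim\Quot(\mathcal{O}_C^2,d)=2d$ strictly exceeds $\dim\Gamma=d+1$, so the Quot locus is no longer swallowed by the closure of the locally free locus and becomes a second component. Finally, for the identification $\overline{M_f}\cong\mathbb{P}_{\mathbb{P}^1}(\mathcal{O}(2)^{\oplus(d+1)}\oplus\mathcal{O})$ the paper does not construct the projectivisation directly from relative extension data as you propose (this is the obstacle you yourself flag); it instead compares the two smooth compactifications of the $\mathbb{A}^{d+1}$-bundle, which requires having first established smoothness of $\overline{M_f}$ along $\Gamma$ --- so the local Grassmannian analysis cannot be bypassed.
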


\subsection*{Acknowledgements}
The first author is supported by a grant from Tsinghua University and
a grant of the National Program of Overseas High Level Talent.
The second author is supported by Shuimu Tsinghua Scholar Program and
China Postdoctoral Science Foundation (2023TQ0172).
The third author is supported by grants from Beijing Institute of Mathematical Sciences and Applications (BIMSA),
the Beijing NSF BJNSF-IS24005,
and the China National Science Foundation (NSFC) NSFC-RFIS program W2432008.
He would also like to thank China's National Program of Overseas High Level Talent for generous support.
Finally, he would like to thank NSF AI Institute for Artificial Intelligence and Fundamental Interactions at
Massachusetts Institute of Technology (MIT) which is funded by the US NSF grant under Cooperative Agreement
PHY-2019786.
\section{Preliminaries}

We work over an algebraically closed field \(\mathbf{k}\) of characteristic zero.
All varieties and schemes are defined over \(\mathbf{k}\) unless stated otherwise, and
varieties are assumed to be irreducible.

\subsection{Contractions}

A \emph{contraction} is a projective morphism \(f\colon X\to Y\) of schemes such that \(f_*\mathcal{O}_X=\mathcal{O}_Y\)
In particular, \(f\) is surjective and has connected fibres.

\subsection{Pairs and singularities}

Let \(X\) be a pure-dimensional scheme of finite type over \(\mathbf{k}\),
and let \(B\) be a \(\mathbb{Q}\)-divisor on \(X\).
We denote the coefficient of a prime divisor \(D\) in \(B\) by \(\mu_D B\).

A \emph{pair} \((X,B)\) consists of a normal quasi-projective variety \(X\)
and a \(\mathbb{Q}\)-divisor \(B\geq 0\) such that \(K_X+B\) is \(\mathbb{Q}\)-Cartier.
We call \(B\) the \emph{boundary divisor}.

Let \(\phi\colon W\to X\) be a log resolution of the pair \((X,B)\).
Let \(K_W+B_W\) be the pullback of \(K_X+B\).
The \emph{log discrepancy} of a prime divisor \(D\) on \(W\) with respect to \((X,B)\) is defined as
\[
	a(D,X,B)\coloneqq 1-\mu_D B_W.
\]
We say that \((X,B)\) is lc (resp.\ klt) if \(a(D,X,B)\) is \(\geq 0\) (resp.\ \(>0\)) for every \(D\);
equivalently, every coefficient of \(B_W\) is \(\leq 1\) (resp.\ \(<1\)).

A \emph{log smooth} pair is a pair \((X,B)\) with \(X\) smooth and \(\Supp B\) having simple normal crossing
singularities.

\subsection{Types of models}

Let \((X,B)\) be an lc pair.

Let \(X\to Z\) be a contraction to a normal variety and
assume that \(K_X+B\) is big over \(Z\).
We say that the \emph{log canonical model} of \((X,B)\) over \(Z\) exists
if there is a birational contraction \(\varphi\colon X\dashrightarrow Y\) where
\begin{itemize}
	\item \(Y\) is normal and projective over \(Z\);
	\item \(K_Y+B_Y\coloneqq \varphi_*(K_X+B)\) is ample over \(Z\); and
	\item \(\alpha^*(K_X+B)\geq \beta^*(K_Y+B_Y)\) for any common resolution
		\[
			\begin{tikzcd}
				& W \ar[ld,swap,"\alpha"] \ar[rd,"\beta"] \\
				X \ar[rr,dashrightarrow,swap,"\varphi"] \ar[rd] & & Y \ar[ld] \\
				& Z.
			\end{tikzcd}
		\]
\end{itemize}
We call \((Y,B_Y)\) the log canonical model of \((X,B)\) over \(Z\).

\medskip

Now we recall the definition of stable minimal model in relative setting used in \cite{birkar2024sheaf}.

\begin{definition}[{\cite[Definition~4.3]{birkar2024sheaf}}]\label{def:lc_stable_minimal_model}
	Let \(Z\) be an algebraic variety.
	A log canonical \emph{stable minimal model} over \(Z\) is of the form
	\[
		(X,B), A \xlongrightarrow{f} Z
	\]
	where
	\[
		\begin{cases}
			(X,B) \text{ is a log canonical pair equipped with a projective morphism \(X \xlongrightarrow{f} Z\)}, \\
			K_X+B \text{ is semi-ample over \(Z\)},                                                                \\
			A\geq 0 \text{ is an integral divisor on } X,                                                        \\
			K_X+B+uA \text{ is ample over } Z \text{ for } 0<u\ll 1,                                             \\
			(X,B+uA) \text{ is log canonical for } 0<u\ll 1.
		\end{cases}
	\]
\end{definition}

\subsection{Chern character}

We will study moduli spaces of sheaf stable pairs with fixed Chern character.
The following lemma is convenient for computing Chern characters of sheaves supported on divisors.

\begin{lemma}[{\cite[p.30, Lemma~1]{friedman1998algebraic}}]
	Let \(X\) be a smooth quasi-projective variety and let \(j\colon D\hookrightarrow X\) be an effective Cartier divisor.
	Suppose that \(\mathcal{L}\) is a line bundle on \(D\).
	Then
	\[
		\ch_1(j_*\mathcal{L})=[D], \quad
		\ch_2(j_*\mathcal{L})=-\frac{1}{2}[D]^2+j_*\ch_1(\mathcal{L}).
	\]
\end{lemma}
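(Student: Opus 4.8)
The plan is to apply the Grothendieck--Riemann--Roch theorem to the closed immersion \(j\colon D\hookrightarrow X\). Since \(j\) is a closed immersion, the higher direct images \(R^{i}j_{*}\mathcal{L}\) vanish for \(i>0\), so the \(K\)-theoretic pushforward of \(\mathcal{L}\) is simply \(j_{*}\mathcal{L}\), and GRR reads
\[
	\ch(j_{*}\mathcal{L})\cdot\td(T_{X})=j_{*}\bigl(\ch(\mathcal{L})\cdot\td(T_{D})\bigr).
\]
First I would rewrite the right-hand side using adjunction. The normal sequence gives a short exact sequence \(0\to T_{D}\to j^{*}T_{X}\to\mathcal{N}_{D/X}\to 0\) of bundles on \(D\), where \(\mathcal{N}_{D/X}=\mathcal{O}_{D}(D)\) is the line bundle with \(c_{1}=j^{*}[D]\); by the Whitney formula \(j^{*}\td(T_{X})=\td(T_{D})\cdot\td(\mathcal{N}_{D/X})\), hence \(\td(T_{D})=j^{*}\td(T_{X})\cdot\td(\mathcal{N}_{D/X})^{-1}\).

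Substituting this and applying the projection formula \(j_{*}(j^{*}\alpha\cdot\beta)=\alpha\cdot j_{*}\beta\) (with \(\alpha\) on \(X\) and \(\beta\) on \(D\)) to pull the factor \(j^{*}\td(T_{X})\) out past \(j_{*}\), the class \(\td(T_{X})\) cancels on both sides and the formula collapses to the self-intersection form
\[
	\ch(j_{*}\mathcal{L})=j_{*}\Bigl(\frac{\ch(\mathcal{L})}{\td(\mathcal{N}_{D/X})}\Bigr).
\]
It then remains to expand the right-hand side in low degrees. Writing \(\nu\coloneqq c_{1}(\mathcal{N}_{D/X})=j^{*}[D]\), the Todd class of a line bundle gives \(\td(\mathcal{N}_{D/X})^{-1}=(1-e^{-\nu})/\nu=1-\tfrac{1}{2}\nu+\cdots\), while \(\ch(\mathcal{L})=1+\ch_{1}(\mathcal{L})+\cdots\). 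Multiplying,
\[
	\frac{\ch(\mathcal{L})}{\td(\mathcal{N}_{D/X})}=1+\Bigl(\ch_{1}(\mathcal{L})-\tfrac{1}{2}\nu\Bigr)+\cdots,
\]
and pushing forward, which raises codimension by one, with \(j_{*}1=[D]\) and \(j_{*}\nu=j_{*}j^{*}[D]=[D]\cdot[D]=[D]^{2}\) by the projection formula, yields \(\ch_{1}(j_{*}\mathcal{L})=[D]\) and \(\ch_{2}(j_{*}\mathcal{L})=j_{*}\ch_{1}(\mathcal{L})-\tfrac{1}{2}[D]^{2}\), as claimed.

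The computation itself is routine bookkeeping; the one place to be careful is that no global extension of \(\mathcal{L}\) to \(X\) is assumed, so one cannot simply tensor the structure sequence \(0\to\mathcal{O}_{X}(-D)\to\mathcal{O}_{X}\to j_{*}\mathcal{O}_{D}\to 0\) by a line bundle on \(X\). When \(\mathcal{L}=j^{*}\widetilde{\mathcal{L}}\) does extend, this elementary route gives \(\ch(j_{*}\mathcal{L})=\ch(\widetilde{\mathcal{L}})\cdot(1-e^{-[D]})\) at once, and reproduces the same two identities after rewriting \(c_{1}(\widetilde{\mathcal{L}})\cdot[D]=j_{*}\ch_{1}(\mathcal{L})\) via the projection formula. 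The virtue of GRR is that it treats a general, possibly non-extendable, \(\mathcal{L}\) uniformly. The main obstacle is therefore conceptual rather than computational: justifying the formula for line bundles that do not extend from \(X\), which is exactly what GRR supplies.
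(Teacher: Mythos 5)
The paper offers no proof of this lemma: it is quoted verbatim from Friedman's book, so your argument has to stand on its own. Your overall strategy is the standard one and your bookkeeping is correct: GRR for the closed immersion \(j\), cancellation of \(\td(T_X)\) by the projection formula, the expansion \(\td(\mathcal{N}_{D/X})^{-1}=1-\tfrac12\nu+\cdots\) with \(\nu=j^{*}[D]\), and the pushforward identities \(j_*1=[D]\), \(j_*\nu=[D]^2\) do yield exactly the two stated formulas, and your sanity check in the extendable case is also right.

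There is, however, a genuine gap at the first step: the normal bundle sequence \(0\to T_D\to j^{*}T_X\to\mathcal{N}_{D/X}\to 0\) and the class \(\td(T_D)\) presuppose that \(D\) is \emph{smooth}, while the statement allows an arbitrary effective Cartier divisor — and the paper applies the lemma precisely to singular and non-reduced divisors (e.g.\ \(D=nC\) and \(D=2C\), where \(\ch(\mathcal{O}_{2C})=(0,2C,2d)\) is computed this way). For such \(D\) the tangent sheaf is not locally free, and the Borel--Serre/Fulton form of GRR for proper morphisms of \emph{nonsingular} varieties, which you are implicitly invoking, simply does not apply. The fix is standard but must be stated: \(j\) is a regular embedding (a Cartier divisor in a smooth variety), so one uses GRR for local complete intersection morphisms (SGA6, or the Baum--Fulton--MacPherson singular Riemann--Roch, Fulton, \emph{Intersection Theory}, Ch.~18), in which \(\td(T_D)\) is replaced by the Todd class of the virtual tangent bundle \(T_D^{\mathrm{vir}}=j^{*}T_X-\mathcal{N}_{D/X}\) in \(K^0(D)\); since \(\mathcal{N}_{D/X}=\mathcal{O}_D(D)\) is a line bundle for any Cartier \(D\), the rest of your computation then goes through verbatim and gives \(\ch(j_*\mathcal{L})=j_*\bigl(\ch(\mathcal{L})\cdot\td(\mathcal{N}_{D/X})^{-1}\bigr)\). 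Relatedly, your closing diagnosis is slightly misplaced: non-extendability of \(\mathcal{L}\), which you flag as the main conceptual obstacle, is the easier issue, whereas the possible singularity or non-reducedness of \(D\), which you never address, is exactly where the naive form of GRR fails.
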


\subsection{Support of a sheaf}

Let \(X\) be a Noetherian scheme and \(\mathcal{F}\) a coherent sheaf on \(X\).
The support of \(\mathcal{F}\) is the closed set \(\Supp(\mathcal{F})\coloneqq\{x\in X\mid\mathcal{F}_x\neq 0\}\).
Its dimension is called the dimension of the sheaf \(\mathcal{F}\).

\begin{definition}[{cf.~\cite[Section~1.1]{huybrechts2010geometry}}]\label{def:annihilator_support}
	The \emph{annihilator support} of \(\mathcal{F}\) is the closed subscheme \(\mathcal{Z}_a(\mathcal{F})\)
	defined by the annihilator ideal sheaf \(\Ann(\mathcal{F})\) of \(\mathcal{F}\),
	which is the kernel of \(\mathcal{O}_{X}\to \SEnd(\mathcal{F})\).
\end{definition}

The annihilator support of \(\mathcal{F}\) is the smallest closed subscheme \(i\colon Z\hookrightarrow X\)
such that the canonical map \(\mathcal{F}\to i_*i^{*}\mathcal{F}\) is an isomorphism.

\begin{definition}[{\cite[Definition~1.1.2]{huybrechts2010geometry}}]
	A coherent sheaf \(\mathcal{F}\) is \emph{pure} of dimension \(d\)
	if \(\dim\Supp(\mathcal{F}')=d\) for all non-zero coherent subsheaves \(\mathcal{F}'\subseteq \mathcal{F}\).
\end{definition}

We next introduce Fitting support, which is the appropriate notion for the Hilbert–Chow morphism.

\begin{definition}[{\cite[Tag~0C3C]{stacks-project}}]
	The \emph{Fitting support} of \(\mathcal{F}\) is the closed subscheme \(\mathcal{Z}_f(\mathcal{F})\)
	defined by the Fitting ideal \(\operatorname{Fit}_0(\mathcal{F})\) of \(\mathcal{F}\),
	which is constructed from local  presentations of \(\mathcal{F}\).
	Namely, if \(U\subseteq X\) is open, and
	\[
		\mathcal{O}_{U}^m\longrightarrow \mathcal{O}_{U}^n\longrightarrow \mathcal{F}|_{U}\longrightarrow 0
	\]
	is a presentation of \(\mathcal{F}\) over \(U\) with \(m\geq n\),
	then \(\Fit_0(\mathcal{F})|_U\) is generated by the \(n\times n\)-minors
	of the matrix defining the presentation.
\end{definition}

\begin{remark}[{\cite[Tag~0CYX]{stacks-project}}]
	With the notation above, we have:
	\begin{itemize}
		\item \(\Supp(\mathcal{F})\subseteq\mathcal{Z}_a(\mathcal{F})\subseteq \mathcal{Z}_f(\mathcal{F})\)
			as closed subschemes;
		\item \(\Supp(\mathcal{F})=\mathcal{Z}_a(\mathcal{F})=\mathcal{Z}_f(\mathcal{F})\) as closed subsets;
		\item when \(\mathcal{F}\) is locally principal, one has \(\mathcal{Z}_a(\mathcal{F})=\mathcal{Z}_f(\mathcal{F})\).
	\end{itemize}
\end{remark}
Here, locally principal means that the sheaf is locally generated by a single element.

\begin{remark}\label{rmk:divisor_fitting_support}
	Let \(X\) be a smooth variety and let \(\mathcal{F}\) be a pure sheaf on \(X\) with codimension \(1\).
	Following \cite[Remark~7.1.5]{fantechi2006fundamental},
	for each prime divisor \(D\) on \(X\),
	there is an open subset \(U\subseteq X\) with \(U\cap D\neq\emptyset\),
	on which we have a resolution
	\[
		0\longrightarrow \mathcal{O}_U^n\xlongrightarrow{\varphi} \mathcal{O}_U^n\longrightarrow
		\mathcal{F}|_U\longrightarrow 0.
	\]
	Let \(m_D\) be the vanishing order of \(\det(\varphi)\) on an open subset of \(D\).
	Then the Fitting support of \(\mathcal{F}\) is the effective Cartier divisor
	\[
		\div(\mathcal{F})\coloneqq \sum_D m_DD.
	\]
	Moreover, \(\ch_1(\mathcal{F})=\div(\mathcal{F})\) (cf.~\cite[Section~5.3]{mumford1994geometric}).
\end{remark}

\begin{example}\label{exa:fitting_support_determinant}
	Let \(X\) be a normal algebraic variety and \(\mathcal{E}\) a locally free sheaf of rank \(r\).
	Consider a sheaf stable pair
	\[
		0\longrightarrow \mathcal{O}_{X}^r\longrightarrow \mathcal{E}\longrightarrow \mathcal{Q}\longrightarrow 0
	\]
	given by global sections \(s_{1},\dots,s_{r}\in H^0(X,\mathcal{E})\).
	The Fitting support of \(\mathcal{Q}\) is defined by the vanishing of
	\(s_1\wedge\dots\wedge s_r\in H^0(X,\det\mathcal{E})\).
	This is the \emph{degeneracy locus} of the sections \(s_1,\dots,s_r\),
	which coincides as a cycle with the first Chern class of \(\mathcal{E}\)
	(see \cite[Example~14.4.2]{fulton1998intersection}).
\end{example}

\begin{example}
	Let \(i\colon D\hookrightarrow X\) be an effective Cartier divisor on a smooth projective variety.
	Note that the ideal sheaf \(\mathcal{O}_{X}(-D)\) is locally principal.
	If \(\mathcal{F}\) is a locally free sheaf of rank \(r\) on \(D\),
	then the Fitting support of \(i_*\mathcal{F}\) is defined by the ideal sheaf \(\mathcal{O}_{X}(-rD)\),
	the \(r\)-th power of \(\mathcal{O}_{X}(-D)\).
	In this case, \(\ch_1(i_*\mathcal{F})=rD\).
\end{example}

\subsection{Cross-sections of ruled surfaces}

The results of this subsection will be used in \zcref{sec:geometric_treatment}.

Let \(Z\) be a smooth projective surface and let \(X\to Z\) be a \(\mathbb{P}^1\)-bundle.
For any subvariety \(L\) of \(Z\), set \(X_L\coloneqq \pi^{-1}(L)\);
then \(X_L\to L\) is again a \(\mathbb{P}^1\)-bundle.

\begin{proposition}\label{prop:self_intersection_blowup_curve}
	Suppose that \(L\subseteq Z\) is a smooth irreducible curve with self-intersection \(b\in\mathbb{Z}\).
	Let \(\ell\) be a cross-section of the induced \(\mathbb{P}^1\)-bundle
	\[
		S\coloneqq X_L\to L.
	\]
	Let \(\sigma\colon\widetilde{X}\to X\) be the blowup along \(\ell\), with exceptional divisor \(E\),
	and let \(\widetilde{S}\) be the strict transform of \(S\).
	Set \(C\coloneqq E\cap \widetilde{S}\).
	If \((\ell^2)_S=a\), then \((C^2)_E=b-a\).
\end{proposition}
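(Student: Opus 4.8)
The plan is to realise $C$ as a section of the ruled surface $E$ and to compute its self-intersection through the degrees of the two line bundles governing that section. First I would record the dimensions: since $Z$ is a surface and $X\to Z$ is a $\mathbb{P}^1$-bundle, $X$ is a smooth threefold, $S=X_L$ is a smooth surface in $X$, and $\ell\subseteq S$ is a smooth curve that is a Cartier divisor in $S$ but has codimension two in $X$. Blowing up the smooth center $\ell$ in the smooth threefold $X$ produces the exceptional divisor $E=\mathbb{P}(N_{\ell/X})$, which is a $\mathbb{P}^1$-bundle over $\ell$ (the projectivised normal bundle).

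Next I would identify $C=E\cap\widetilde{S}$. Because $\ell$ is contained in the smooth surface $S$ as a divisor, the strict transform $\widetilde{S}$ meets $E$ precisely along $\mathbb{P}(N_{\ell/S})\subseteq\mathbb{P}(N_{\ell/X})$, the projectivisation of the sub-line-bundle consisting of those normal directions to $\ell$ that remain tangent to $S$. Since $N_{\ell/S}$ is a line bundle, this cuts out a single point in each fibre of $E\to\ell$, so $C$ is exactly the section of $E\to\ell$ determined by the inclusion $N_{\ell/S}\hookrightarrow N_{\ell/X}$. This geometric description is independent of the chosen convention for $\mathbb{P}(\,\cdot\,)$.

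I would then compute the self-intersection via the normal bundle of this section. For a section of $\mathbb{P}(\mathcal{V})$ cut out by a sub-line-bundle $L_1\subseteq\mathcal{V}$ with quotient $L_2=\mathcal{V}/L_1$, the normal bundle of the section inside the ruled surface is $L_1^{\vee}\otimes L_2$, whence its self-intersection equals $\deg L_2-\deg L_1$. The relevant exact sequence here is the normal bundle sequence of the flag $\ell\subseteq S\subseteq X$,
\[
	0\longrightarrow N_{\ell/S}\longrightarrow N_{\ell/X}\longrightarrow N_{S/X}|_\ell\longrightarrow 0,
\]
so that $L_1=N_{\ell/S}$ and $L_2=N_{S/X}|_\ell$. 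The first degree is immediate from the hypothesis: $\deg N_{\ell/S}=(\ell^2)_S=a$.

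Finally I would evaluate $\deg\bigl(N_{S/X}|_\ell\bigr)$. Since $L$ is a smooth curve on the smooth surface $Z$, it is an effective Cartier divisor and $S=X_L=\pi^{*}L$, hence $\mathcal{O}_X(S)=\pi^{*}\mathcal{O}_Z(L)$ and $N_{S/X}|_\ell=\pi^{*}\mathcal{O}_Z(L)|_\ell$. As $\pi$ restricts to an isomorphism $\ell\xrightarrow{\ \sim\ }L$, this line bundle is the pullback of $N_{L/Z}=\mathcal{O}_Z(L)|_L$, whose degree is $L^2=b$; degrees are preserved under the isomorphism, so $\deg L_2=b$ and therefore $(C^2)_E=b-a$. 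The step requiring the most care is the identification of $C$ with $\mathbb{P}(N_{\ell/S})$, together with the sign bookkeeping in the self-intersection formula for a section; once the normal bundle sequence is in place, the two degree computations are routine.
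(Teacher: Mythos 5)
Your proof is correct, and it reaches the conclusion by a genuinely different final computation than the paper. Both arguments share the same setup: the flag $\ell\subseteq S\subseteq X$ and the normal bundle sequence
\[
0\longrightarrow \mathcal{N}_{\ell/S}\longrightarrow \mathcal{N}_{\ell/X}\longrightarrow \mathcal{N}_{S/X}|_{\ell}\longrightarrow 0,
\]
with $\deg\mathcal{N}_{\ell/S}=a$ and $\deg\mathcal{N}_{S/X}|_\ell=b$ (the paper gets the latter from the projection formula $S\cdot\ell=L^2$, you get it by pulling back $\mathcal{O}_Z(L)$ along $\pi|_\ell\colon\ell\xrightarrow{\sim}L$ --- the same fact in two dialects). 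Where you diverge is the last step: the paper never identifies $C$ intrinsically inside $E$, but instead writes $\widetilde{S}=\sigma^{*}S-E$ and expands $(\widetilde{S}|_E)^2_E=(\sigma^{*}S-E)^2\cdot E$ using $\sigma_*(E^2)=-\ell$ and $E^3=-\deg\mathcal{N}_{\ell/X}=-(a+b)$, obtaining $2b-(a+b)=b-a$. You instead recognise $C$ as the section $\mathbb{P}(\mathcal{N}_{\ell/S})\subseteq\mathbb{P}(\mathcal{N}_{\ell/X})$ cut out by the sub-line-bundle, and apply the formula $\mathcal{N}_{C/E}\simeq L_1^{\vee}\otimes L_2$ for a section defined by $L_1\subseteq\mathcal{V}$ with quotient $L_2$, giving $(C^2)_E=\deg L_2-\deg L_1=b-a$ directly. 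Your route is more conceptual and makes visible \emph{which} section of $E\to\ell$ the curve $C$ is (useful for the paper's follow-up \zcref{cor:ruled_surface_inv}, which needs to compare $C$ with other sections), at the cost of invoking the normal-bundle-of-a-section formula and the identification of $\widetilde{S}\cap E$ with $\mathbb{P}(\mathcal{N}_{\ell/S})$; the paper's route is pure intersection calculus on the blowup and needs only the standard pushforward identities for $E^2$ and $E^3$. Both are complete.
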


\begin{proof}
	Note that \(E\simeq\mathbb{P}(\mathcal{N}_{\ell/X})\to \ell\) is a \(\mathbb{P}^1\)-bundle
	and \(C\) is a cross-section.
	By the projection formula \(S\cdot\ell=L^2=b\).
	The normal bundles sequence
	\[
		0\longrightarrow \mathcal{N}_{\ell/S}\simeq \mathcal{O}_{\ell}(a)\longrightarrow
		\mathcal{N}_{\ell/X}\longrightarrow
		\mathcal{N}_{S/X}|_{\ell}\simeq \mathcal{O}_{X}(S)|_{\ell}\simeq \mathcal{O}_{\ell}(b)\longrightarrow 0,
	\]
	yields \(\deg\mathcal{N}_{\ell/X}=a+b\).
	Since \(\widetilde{S}=\sigma^{*}S-E\),
	\begin{align*}
		(C^2)_E & = (\widetilde{S}|_E)^2_E = \widetilde{S}^2\cdot E \\
		& = (\sigma^{*}S-E)^2\cdot E                            \\
		& = \sigma^{*}S^2\cdot E - 2\sigma^{*}S\cdot E^2 + E^3  \\
		& = -2S\cdot\sigma_*(E^2) + E^3                         \\
		& = 2S\cdot\ell - \deg \mathcal{N}_{\ell/X}             \\
		& = b-a.
	\end{align*}
	This proves the claim.
\end{proof}

We next determine the \emph{invariant} of the ruled surface \(E\to\ell\) in some circumstance
\cite[Chapter~V, Proposition~2.8]{hartshorne1977algebraic}.

\begin{corollary}\label{cor:ruled_surface_inv}
	Under the assumption of \zcref{prop:self_intersection_blowup_curve},
	suppose that there exists a second cross-section \(C'\) of \(E\to \ell\) disjoint from \(C\),
	then \(E\) is a ruled surface over \(\ell\) with invariant \(\abs{a-b}\).
\end{corollary}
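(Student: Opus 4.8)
The plan is to invoke the structure theory of geometrically ruled surfaces, whose central dichotomy is that $E\cong\mathbb{P}(\mathcal{N}_{\ell/X})\to\ell$ carries two disjoint sections exactly when the rank-two bundle $\mathcal{N}\coloneqq\mathcal{N}_{\ell/X}$ decomposes as a direct sum of line bundles. First I would translate the two cross-sections into this language: a section of $\mathbb{P}(\mathcal{N})\to\ell$ is the same datum as a saturated line subbundle $\mathcal{L}\subseteq\mathcal{N}$ (the kernel of the tautological quotient), and the disjointness of $C$ and $C'$ means that the two resulting subbundles $\mathcal{L}_1,\mathcal{L}_2$ meet in the zero subspace at every point of $\ell$. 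Hence the natural map $\mathcal{L}_1\oplus\mathcal{L}_2\to\mathcal{N}$ is a fibrewise isomorphism, so $\mathcal{N}\cong\mathcal{L}_1\oplus\mathcal{L}_2$ and, twisting by $\mathcal{L}_1^{-1}$ (which does not alter the projectivisation), $E\cong\mathbb{P}(\mathcal{O}_\ell\oplus\mathcal{M})$ with $\mathcal{M}\coloneqq\mathcal{L}_2\otimes\mathcal{L}_1^{-1}$.

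Then I would read off the invariant from Hartshorne's classification \cite[Chapter~V, Proposition~2.8 and Example~2.11.3]{hartshorne1977algebraic}. Normalising so that $\deg\mathcal{M}\leq 0$, the decomposable surface $\mathbb{P}(\mathcal{O}_\ell\oplus\mathcal{M})$ has invariant $e=-\deg\mathcal{M}=\abs{\deg\mathcal{M}}$, and its two natural disjoint sections have self-intersections $-e$ and $+e$. In particular $\abs{(D^2)_E}=e$ for each of these two sections $D$. Matching this against \zcref{prop:self_intersection_blowup_curve}, which gives $(C^2)_E=b-a$, yields $e=\abs{b-a}=\abs{a-b}$, as claimed.

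A more hands-on alternative, which I would keep in reserve, is to work numerically in the group $\mathbb{Z}[C_0]\oplus\mathbb{Z}[f]$, where $f$ is a fibre and $C_0$ a section with $C_0^2=-e$. Writing $C\equiv C_0+mf$ and $C'\equiv C_0+m'f$, the relation $C\cdot C'=0$ forces $m+m'=e$ and hence $(C'^2)_E=-(C^2)_E=a-b$; one then identifies the section of non-positive self-intersection with $C_0$. The step requiring genuine care is the sign and normalisation bookkeeping—reconciling the orientation of the splitting $\mathcal{L}_1\oplus\mathcal{L}_2$ with the sign of $(C^2)_E=b-a$—together with the fact that over a base of positive genus the invariant could a priori be negative and the minimal section need not be unique. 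The existence of two disjoint sections is exactly what removes this difficulty, since it forces $\mathcal{N}$ to be decomposable and therefore $e\geq 0$, after which (for $e>0$) the minimal section is the unique irreducible curve of negative self-intersection; the boundary case $b=a$ corresponds to equal-degree summands and invariant $0$.
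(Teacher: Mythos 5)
Your argument is correct, and it takes a genuinely different (and more uniform) route than the paper's. The paper splits into cases on the sign of \(b-a\): when \(b-a\leq 0\) it reads the invariant directly off the twisted normal bundle sequence
\[
0\longrightarrow\mathcal{O}_{\ell}\longrightarrow\mathcal{N}_{\ell/X}(-a)\longrightarrow\mathcal{O}_{\ell}(b-a)\longrightarrow 0,
\]
observing that \(\mathcal{N}_{\ell/X}(-a)\) is then normalised (so the second section is not even needed in that case); when \(b-a>0\) it applies the Hodge Index Theorem to get \((C'^2)_E<0\) and identifies \(C'\) with the unique negative section. You instead make decomposability the centrepiece: the two disjoint sections give saturated line subbundles \(\mathcal{L}_1,\mathcal{L}_2\subseteq\mathcal{N}_{\ell/X}\) whose sum map is a fibrewise isomorphism, so \(\mathcal{N}_{\ell/X}\cong\mathcal{L}_1\oplus\mathcal{L}_2\), and the invariant of a decomposable bundle is the absolute difference of the degrees of its summands, which equals \(\abs{(C^2)_E}=\abs{b-a}\) since \(C\) is precisely the section attached to the summand \(\mathcal{L}_1\). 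This avoids both the case split and the Hodge index argument, and it makes explicit the decomposability that the paper only invokes implicitly in its opening sentence (``two disjoint cross-sections force \(e\geq 0\)''); what the paper's version buys in exchange is that its first case needs only one section, and its second case uses nothing beyond elementary intersection theory on \(E\). Your reserve numerical argument is essentially the paper's second case; as you note yourself, the step identifying the non-positive section with \(C_0\) needs the classification of irreducible curves on a ruled surface with \(e\geq 0\) (to rule out \(C^2=0\) with \(e>0\)), so the decomposability input is genuinely doing work there and your main argument is the cleaner one to keep.
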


\begin{proof}
	The existence of two disjoint cross‑sections forces the invariant of \(E\) to be non‑negative.
	From the normal bundle sequence we obtain
	\[
		0\longrightarrow \mathcal{O}_{\ell}\longrightarrow \mathcal{N}_{\ell/X}(-a)\longrightarrow
		\mathcal{O}_{\ell}(b-a)\longrightarrow 0.
	\]
	If \(b-a\leq 0\), then \(\mathcal{N}_{\ell/X}(-a)\) is normalised
	and the invariant of \(E\simeq \mathbb{P}(\mathcal{N}_{\ell/X})\) equals \(a-b\).

	If \(b-a>0\), then \((C^2)_E=b-a>0\) and \(C\cdot C'=0\).
	By the Hodge Index Theorem \cite[Chapter~V, Theorem~1.9]{hartshorne1977algebraic}, \((C'^2)_E<0\).
	Thus \(C'\) is the unique irreducible negative curve on \(E\),
	and the invariant of \(E\to\ell\) equals \(b-a\).
\end{proof}

Let \(\mathbb{F}_n\) denote the Hirzebruch surface of degree \(n\geq 1\),
and let \(C_0\) be its negative section.
We write
\[
	D(\mathbb{F}_n\to\mathbb{P}^1,C_0)=
	\{\text{cross-sections of } \mathbb{F}_n\to \mathbb{P}^1 \text{ disjoint from } C_0\},
\]
and for a point \(p\not\in C_0\) we set
\[
	D(\mathbb{F}_n\to \mathbb{P}^1,C_0,p)=
	\{\text{cross-sections of } \mathbb{F}_n\to \mathbb{P}^1 \text{disjoint from } C_0 \text{ and passing through } p\}.
\]

\begin{lemma}\label{lem:parameter_space_cross_sections}
	The space \(D(\mathbb{F}_n\to\mathbb{P}^1,C_0)\) is parametrised by \(\mathbb{A}^{n+1}\),
	and the subspace \(D(\mathbb{F}_n\to\mathbb{P}^1,C_0,p)\) is parametrised by \(\mathbb{A}^n\).
\end{lemma}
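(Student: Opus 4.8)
The plan is to identify the complement $\mathbb{F}_n\setminus C_0$ with the total space of the line bundle $\mathcal{O}_{\mathbb{P}^1}(n)$ and to recognise cross-sections disjoint from $C_0$ as global sections of that line bundle. First I would recall that in $\mathbb{F}_n=\mathbb{P}(\mathcal{O}_{\mathbb{P}^1}\oplus\mathcal{O}_{\mathbb{P}^1}(n))$ the negative section $C_0$ has $C_0^2=-n$, while the disjoint section $C_\infty\coloneqq C_0+nf$ has $C_\infty^2=n$ and $C_0\cdot C_\infty=0$, where $f$ is a fibre. An irreducible cross-section $\ell$ meets every fibre once, hence $\ell\sim C_0+af$ for some $a\geq 0$; since $\ell\cdot C_0=a-n$ and an irreducible curve other than $C_0$ meets $C_0$ non-negatively, disjointness from $C_0$ is equivalent to $a=n$, i.e.\ $\ell\in|C_0+nf|$. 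Thus the whole problem takes place inside the linear system $|C_0+nf|$.

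Next I would compute this linear system. The identification $\mathbb{F}_n\setminus C_0\cong\operatorname{Tot}(\mathcal{O}_{\mathbb{P}^1}(n))$, with $C_\infty$ as the zero section, shows that a cross-section lying entirely off $C_0$ is exactly a section of this line-bundle projection, hence an element of $H^0(\mathbb{P}^1,\mathcal{O}(n))$, a vector space of dimension $n+1$. To see this as an isomorphism of varieties rather than a mere bijection, I would argue projectively: every irreducible member of $|C_0+nf|$ is automatically disjoint from $C_0$ because $(C_0+nf)\cdot C_0=0$, whereas the reducible members are precisely $C_0+F$ with $F\in|nf|=\pi^{*}|\mathcal{O}_{\mathbb{P}^1}(n)|\cong\mathbb{P}^n$. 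Since $h^0(\mathbb{F}_n,\mathcal{O}(C_0+nf))=n+2$, one gets $|C_0+nf|\cong\mathbb{P}^{n+1}$ with the reducible locus a hyperplane $\mathbb{P}^n$ ``at infinity'', so the open complement consisting of cross-sections disjoint from $C_0$ is $\mathbb{A}^{n+1}$, as claimed.

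For the second statement I would impose the incidence condition at $p$. Passing through a fixed point $p$ is a single linear condition on $|C_0+nf|\cong\mathbb{P}^{n+1}$, cutting out a hyperplane $H_p\cong\mathbb{P}^n$. The remaining point is to check that this hyperplane is not absorbed into the boundary $\mathbb{P}^n$ of reducible divisors: using $p\notin C_0$, a reducible member $C_0+F$ contains $p$ if and only if $F$ contains the fibre through $p$, which describes only a $\mathbb{P}^{n-1}\subset H_p$. Hence $H_p$ meets the boundary in codimension one and $H_p\cap\mathbb{A}^{n+1}=\mathbb{P}^n\setminus\mathbb{P}^{n-1}\cong\mathbb{A}^n$, giving the parametrisation of $D(\mathbb{F}_n\to\mathbb{P}^1,C_0,p)$. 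As a cross-check, the same answer falls out of the total-space picture, since evaluation $H^0(\mathbb{P}^1,\mathcal{O}(n))\to\mathcal{O}(n)_{\pi(p)}$ is surjective and its fibre over the value corresponding to $p$ is an affine subspace of dimension $n$.

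I expect the only genuine subtlety --- the \emph{main obstacle} --- to be bookkeeping: pinning down the Hirzebruch convention so that the relevant twist is $\mathcal{O}(n)$ and not $\mathcal{O}(-n)$, and verifying that the set-level identifications genuinely upgrade to isomorphisms of varieties. The projective, linear-system formulation is designed precisely to sidestep both issues, since it presents $D(\mathbb{F}_n\to\mathbb{P}^1,C_0)$ directly as a Zariski-open subset of a projective space and reduces the point condition to intersecting with a single hyperplane.
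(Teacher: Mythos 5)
Your argument is correct and essentially identical to the paper's: both reduce to showing that the cross-sections in question are exactly the irreducible members of \(\abs{C_0+nF}\), identify the reducible locus with the hyperplane \(C_0+\abs{nF}\simeq\mathbb{P}^n\) inside \(\abs{C_0+nF}\simeq\mathbb{P}^{n+1}\), and for the second claim intersect with the hyperplane of divisors through \(p\). The total-space-of-\(\mathcal{O}(n)\) picture you add is a pleasant cross-check but does not change the proof.
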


\begin{proof}
	Let \(F\) be a general fibre of \(\mathbb{F}_n\to\mathbb{P}^1\).
	Any section \(C\neq C_0\) of \(\mathbb{F}_n\to\mathbb{P}^1\) lies in the linear system \(\abs{C_0+mF}\)
	for some \(m\geq n\) (\cite[Chapter~V, Theorem 2.17]{hartshorne1977algebraic}).
	Since
	\[
		C\cdot C_0 = (C_0+mF)\cdot C_0 = -n + m,
	\]
	the condition that \(C\) be disjoint from \(C_0\) forces \(m=n\).
	Hence every desired cross‑section lies in the complete linear system \(\abs{C_0+nF}\).

	The reducible members of \(\abs{C_0+nF}\) are precisely the curves \(C_0+D'\) with \(D'\in\abs{nF}\).
	Since \(h^0(\mathbb{F}_n,C_0+nF)=n+2\) and \(h^0(\mathbb{F}_n,nF)=n+1\),
	Consequently
	\[
		D(\mathbb{F}_n\to\mathbb{P}^1,C_0)\simeq \abs{C_0+nF}\setminus\abs{nF}\simeq\mathbb{A}^{n+1}.
	\]
	The linear systems \(\abs{C_0+nF}\) and \(\abs{nF}\) are both base-point-free
	\cite[Chapter~V, Theorem~2.17]{hartshorne1977algebraic}.
	Fixing a point \(p\not\in C_0\) cuts each linear system by a hyperplane, so that
	\[
		\abs{C_0+nF-p}\simeq \mathbb{P}^n, \quad \abs{nF-p}\simeq \mathbb{P}^{n-1}.
	\]
	Their difference is an affine space of dimension \(n\), giving
	\(D(\mathbb{F}_n\to\mathbb{P}^1,C_0,p)\simeq\mathbb{A}^n\).
\end{proof}

\section{Sheaf stable pairs revisit}

Let \(Z\) be an algebraic variety and let \(\mathcal{E},s\) be a \emph{sheaf stable pair} of rank \(r\) on \(Z\)
(cf.~\zcref{def:sheaf_stable_pairs}).
The morphism \(s\colon \mathcal{O}_{Z}^r\to \mathcal{E}\) is determined by \(r\) global sections
\(s_{1}, \dots, s_{r}\in H^{0}(Z,\mathcal{E})\).

Let \(\mathcal{Q}\coloneqq\coker(s)\) be the cokernel sheaf of the sheaf stable pair.
The \emph{cokernel subscheme} of \((\mathcal{E},s)\) is the Fitting support
\[
	Q\coloneqq \mathcal{Z}_f(\mathcal{Q}).
\]
If \(\mathcal{E}\) is locally free, then the global section
\[
	s_1\wedge s_2\wedge \dots\wedge s_r\in H^0(Z,\det \mathcal{E})
\]
vanishes exactly along the locus where the \(r\) sections become linearly dependent.
In that case \(Q\) is the zero locus of the determinant section \(s_1\wedge s_2\wedge\dots\wedge s_r\)
(cf.~\zcref{exa:fitting_support_determinant}).

\begin{lemma}\label{lem:purity_of_cokernel}
	Let \(Z\) be a smooth projective variety of dimension \(n\).
	Suppose that \(\mathcal{E},s\) is a sheaf stable pair on \(Z\) with non-zero cokernel sheaf \(\mathcal{Q}\).
	Then \(\mathcal{Q}\) is a pure sheaf of dimension \(n-1\).
\end{lemma}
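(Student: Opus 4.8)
The plan is to realise $\mathcal{Q}$ as the cokernel of an honest injection and then rule out low-dimensional associated points by a local depth computation.

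First I would verify that $s$ is injective, so that we have a short exact sequence
\[
	0 \longrightarrow \mathcal{O}_Z^r \xrightarrow{\ s\ } \mathcal{E} \longrightarrow \mathcal{Q} \longrightarrow 0.
\]
Indeed, at the generic point $\eta$ of $Z$ the cokernel vanishes (its support has dimension $<n$), so $s_\eta\colon \mathbf{k}(\eta)^r \to \mathcal{E}_\eta$ is a surjection of $\mathbf{k}(\eta)$-vector spaces of the same dimension $r$, hence an isomorphism; thus $\ker s$ is a torsion subsheaf of the torsion-free sheaf $\mathcal{O}_Z^r$ and therefore vanishes. I would then use the standard reformulation of purity (cf.~\cite{huybrechts2010geometry}): the sheaf $\mathcal{Q}$ is pure of dimension $n-1$ if and only if every associated point of $\mathcal{Q}$ has codimension exactly $1$ in $Z$, equivalently the torsion filtration satisfies $T_{n-2}(\mathcal{Q})=0$ together with $\dim\mathcal{Q}=n-1$.

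The heart of the argument is a local depth estimate. Fix $x\in Z$ of codimension $c=\dim\mathcal{O}_{Z,x}$, write $R=\mathcal{O}_{Z,x}$, and localise the sequence to $0\to R^r\to\mathcal{E}_x\to\mathcal{Q}_x\to 0$. Since $Z$ is smooth, $R$ is regular of dimension $c$, so $\operatorname{depth}R^r=c$; since $\mathcal{E}$ is torsion-free and $\dim R=c\geq 1$, the maximal ideal contains an $\mathcal{E}_x$-regular element and hence $\operatorname{depth}\mathcal{E}_x\geq 1$. The depth inequality for a short exact sequence then gives
\[
	\operatorname{depth}\mathcal{Q}_x \ \geq\ \min\bigl(\operatorname{depth}\mathcal{E}_x,\ \operatorname{depth}R^r-1\bigr) \ \geq\ \min(1,\ c-1).
\]
Consequently, whenever $c\geq 2$ we obtain $\operatorname{depth}\mathcal{Q}_x\geq 1>0$, so $x$ is \emph{not} an associated point of $\mathcal{Q}$.

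It remains to assemble the conclusion. The generic points of the irreducible components of $\Supp\mathcal{Q}$ are associated to $\mathcal{Q}$, so by the previous step each such component has codimension $\leq 1$; combined with the stable-pair hypothesis $\dim\Supp\mathcal{Q}\leq n-1$ and $\mathcal{Q}\neq 0$, this forces $\dim\mathcal{Q}=n-1$. Moreover no associated point can have codimension $0$ (that would give $\dim\mathcal{Q}=n$), so every associated point has codimension exactly $1$, which is precisely purity of dimension $n-1$. I expect the only genuinely substantive point to be the local homological bookkeeping: correctly invoking the depth inequality for the short exact sequence and the bound $\operatorname{depth}\mathcal{E}_x\geq 1$ coming from torsion-freeness; the remaining steps are formal. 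As an alternative to the depth computation one could argue geometrically, namely that a subsheaf $\mathcal{T}\subseteq\mathcal{Q}$ supported in codimension $\geq 2$ has preimage $\mathcal{E}'\subseteq\mathcal{E}$ which agrees with $\mathcal{O}_Z^r$ off a locus of codimension $\geq 2$, whereupon reflexivity of $\mathcal{O}_Z^r$ (algebraic Hartogs on the normal variety $Z$) forces $\mathcal{O}_Z^r\to\mathcal{E}'$ to be an isomorphism and hence $\mathcal{T}=0$.
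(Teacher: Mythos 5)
Your proof is correct, but it takes a genuinely different route from the paper's. The paper dualises the defining sequence with $\SHom(-,\omega_Z)$ twice: it sets $\mathcal{F}$ to be the image of $\mathcal{O}_Z^r$ in $\SExt^1(\mathcal{Q},\omega_Z)$, uses the snake lemma to produce an injection $\mathcal{Q}\hookrightarrow\SExt^1(\mathcal{F},\omega_Z)$, and concludes because the latter is pure of dimension $n-1$ and subsheaves of pure sheaves are pure. You instead characterise purity via associated points and run a local depth computation: from $0\to\mathcal{O}_{Z,x}^r\to\mathcal{E}_x\to\mathcal{Q}_x\to 0$, regularity of $\mathcal{O}_{Z,x}$ and torsion-freeness of $\mathcal{E}_x$ give $\operatorname{depth}\mathcal{Q}_x\geq\min(1,c-1)$ at a point of codimension $c$, so no point of codimension $\geq 2$ is associated to $\mathcal{Q}$; combined with $\dim\Supp\mathcal{Q}\leq n-1$ this pins every associated point to codimension exactly $1$. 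All the ingredients check out (injectivity of $s$ from the generic fibre, the depth inequality for a short exact sequence, and the translation between purity and associated points), and your closing alternative via reflexivity of $\mathcal{O}_Z^r$ is essentially sound as well. What each approach buys: yours is more elementary and self-contained, needing only standard commutative algebra over the regular local rings of $Z$; the paper's duality argument is heavier but deliberately sets up the objects $\mathcal{E}^D$ and $\mathcal{F}$ that reappear in the comparison with Quot-schemes later in the paper, so the two proofs serve slightly different expository purposes.
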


\begin{proof}
	Let \(\omega_Z\) be the dualising sheaf of \(Z\).
	From the short exact sequence
	\[
		0\longrightarrow \mathcal{O}_Z^r\xlongrightarrow{s} \mathcal{E}\longrightarrow \mathcal{Q}\longrightarrow 0,
	\]
	we obtain, by applying \(\SHom(-,\omega_Z)\)
	\[
		0\longrightarrow \mathcal{E}^D\longrightarrow \mathcal{O}_{Z}^r\longrightarrow
		\SExt^1(\mathcal{Q},\omega_Z)\longrightarrow \SExt^1(\mathcal{E},\omega_Z)\longrightarrow 0
	\]
	where \(\mathcal{E}^D\coloneqq \SHom(\mathcal{E},\omega_Z)\) (\cite[Definition~1.1.7]{huybrechts2010geometry}).
	Let \(\mathcal{F}\subseteq\SExt^1(\mathcal{Q},\omega_Z)\) be the image of \(\mathcal{O}_{Z}^r\).
	Then \(\dim\Supp(\mathcal{F})\leq n-1\), and the above sequence splits as
	\[
		0\longrightarrow \mathcal{E}^D\longrightarrow\mathcal{O}_{Z}^r\longrightarrow \mathcal{F}\longrightarrow 0.
	\]
	Applying \(\SHom(-, \omega_Z)\) again, we get
	\[
		0\longrightarrow \mathcal{O}_{Z}^r\longrightarrow \mathcal{E}^{DD}\longrightarrow
		\SExt^1(\mathcal{F},\omega_Z)\longrightarrow 0.
	\]
	The natural map from \(\theta_E\colon \mathcal{E}\to\mathcal{E}^{DD}\) is injective
	because \(\mathcal{E}\) is torsion-free (\cite[Proposition~1.1.10]{huybrechts2010geometry}).
	We obtain a commutative diagram with exact rows
	\[
		\begin{tikzcd}
			0 \ar[r] & \mathcal{O}_Z^r \ar[r,"s"] \ar[d, "\cong"] & \mathcal{E} \ar[r] \ar[d, "\theta_{\mathcal{E}}"]
			& \mathcal{Q}
			\ar[r] \ar[d] & 0 \\
			0 \ar[r] & \mathcal{O}_{Z}^r \ar[r,"s^{DD}"] & \mathcal{E}^{DD} \ar[r] & \SExt^1(\mathcal{F},\omega_Z) \ar[r] & 0.
		\end{tikzcd}
	\]
	By the snake lemma we get an injection
	\[
		\mathcal{Q}\hooklongrightarrow \SExt^1(\mathcal{F},\omega_Z).
	\]
	Hence \(\mathcal{Q}\) can be viewed as a subsheaf of the (torsion) sheaf \(\SExt^1(\mathcal{F},\omega_Z)\).
	By assumption, \(\dim\Supp(\mathcal{Q})=n-1\) (see also \cite[Lemma~3.7]{birkar2024sheaf}),
	so \(\SExt^1(\mathcal{F},\omega_Z)\) is a reflexive sheaf of dimension \(n-1\)
	(\cite[Proposition~1.1.10]{huybrechts2010geometry}).

	Any subsheaf of a reflexive sheaf of pure dimension \(n-1\) is itself pure of the same dimension.
	Therefore \(\mathcal{Q}\) is pure of dimension \(n-1\), as claimed.
\end{proof}

The following corollary is a direct consequence of \zcref{lem:purity_of_cokernel,rmk:divisor_fitting_support}.

\begin{corollary}\label{cor:fitting_support_cartier}
	Let \(Z\) be a smooth projective variety.
	Suppose that \(\mathcal{E},s\) is a sheaf stable pair with cokernel sheaf \(\mathcal{Q}\).
	Then the Fitting support of \(\mathcal{Q}\) forms an effective Cartier divisor.
	Moreover, \(\mathcal{Z}_f(\mathcal{Q})=\ch_1(\mathcal{E})\).
\end{corollary}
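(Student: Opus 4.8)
The plan is to reduce the statement entirely to the two results already in hand: the purity of the cokernel (\zcref{lem:purity_of_cokernel}) and the divisorial description of the Fitting support on a smooth variety (\zcref{rmk:divisor_fitting_support}). First I would dispose of the degenerate case $\mathcal{Q}=0$. Note that $s\colon\mathcal{O}_Z^r\to\mathcal{E}$ is always injective, since $\coker(s)$ has support of dimension $<\dim Z$ forces $s$ to be generically an isomorphism, so $\ker s$ is torsion and hence $0$ as $\mathcal{O}_Z^r$ is torsion-free. If moreover $\mathcal{Q}=0$, then $s$ is also surjective, hence an isomorphism; in that case $\mathcal{Z}_f(\mathcal{Q})=\emptyset$ is the zero divisor and $\ch_1(\mathcal{E})=\ch_1(\mathcal{O}_Z^r)=0$, so the asserted equality holds trivially.

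Assume now $\mathcal{Q}\neq 0$. By \zcref{lem:purity_of_cokernel}, $\mathcal{Q}$ is a pure sheaf of dimension $\dim Z-1$, i.e.\ pure of codimension one. Because $Z$ is smooth, \zcref{rmk:divisor_fitting_support} applies directly: locally $\mathcal{Q}$ admits a square presentation $\mathcal{O}_U^n\xlongrightarrow{\varphi}\mathcal{O}_U^n\to\mathcal{Q}|_U\to 0$, the Fitting ideal $\Fit_0(\mathcal{Q})$ is locally principal, generated by $\det(\varphi)$, and its zero scheme is the effective Cartier divisor $\mathcal{Z}_f(\mathcal{Q})=\div(\mathcal{Q})=\sum_D m_D D$, where $m_D$ is the vanishing order of $\det(\varphi)$ along $D$. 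This already establishes the first assertion. The same remark records the cycle-level identity $\ch_1(\mathcal{Q})=\div(\mathcal{Q})$.

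It remains to identify $\div(\mathcal{Q})$ with $\ch_1(\mathcal{E})$. For this I would invoke additivity of the Chern character along the defining short exact sequence
\[
	0\longrightarrow \mathcal{O}_Z^r \xlongrightarrow{\,s\,} \mathcal{E}\longrightarrow \mathcal{Q}\longrightarrow 0,
\]
which gives $\ch(\mathcal{E})=\ch(\mathcal{O}_Z^r)+\ch(\mathcal{Q})$ and, in degree one, $\ch_1(\mathcal{E})=\ch_1(\mathcal{O}_Z^r)+\ch_1(\mathcal{Q})=\ch_1(\mathcal{Q})$, since $\ch_1(\mathcal{O}_Z^r)=0$. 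Combining with $\ch_1(\mathcal{Q})=\div(\mathcal{Q})=\mathcal{Z}_f(\mathcal{Q})$ from the previous step yields $\mathcal{Z}_f(\mathcal{Q})=\ch_1(\mathcal{E})$, as required.

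Since both inputs are already proved, there is no genuine obstacle here; the only point requiring care is bookkeeping. The equality $\mathcal{Z}_f(\mathcal{Q})=\ch_1(\mathcal{E})$ is an identity of codimension-one cycles — the cycle associated with the effective Cartier divisor $\mathcal{Z}_f(\mathcal{Q})$ agrees with the first Chern class of $\mathcal{E}$ — rather than an equality of schemes, so one must ensure the multiplicities $m_D$ produced by the Fitting presentation in \zcref{rmk:divisor_fitting_support} coincide with the coefficients appearing in $\ch_1(\mathcal{Q})$. This matching is precisely the final assertion of \zcref{rmk:divisor_fitting_support}, so no further computation is needed.
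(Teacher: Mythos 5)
Your proposal is correct and follows essentially the same route as the paper: purity of \(\mathcal{Q}\) via \zcref{lem:purity_of_cokernel}, the Cartier-divisor description and the identity \(\ch_1(\mathcal{Q})=\div(\mathcal{Q})\) from \zcref{rmk:divisor_fitting_support}, and additivity of the Chern character along the defining short exact sequence. The only additions are the explicit treatment of the case \(\mathcal{Q}=0\) and the remark on injectivity of \(s\), both of which are harmless elaborations of what the paper leaves implicit.
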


\begin{proof}
	By \zcref{lem:purity_of_cokernel}
	the sheaf \(\mathcal{Q}\) is pure of codimension \(1\).
	For a pure sheaf of codimension \(1\) its Fitting support is an effective Cartier divisor;
	see \zcref{rmk:divisor_fitting_support}.

	From the short exact sequence
	\[
		0\longrightarrow \mathcal{O}_{Z}^r\longrightarrow \mathcal{E}\longrightarrow \mathcal{Q}\longrightarrow 0
	\]
	we have \(\ch_1(\mathcal{E})=\ch_1(\mathcal{Q})\).
	By \zcref{rmk:divisor_fitting_support}, this equals \(\mathcal{Z}_f(\mathcal{Q})\).
\end{proof}

\subsection*{A duality lemma}

\begin{corollary}
	Let \(\mathcal{E},s\) be a sheaf stable pair on a smooth projective variety \(Z\).
	Then its reflexive hull together with the induced morphism
	\[
		\mathcal{O}_{Z}^r\xlongrightarrow{s^{\vee\vee}}\mathcal{E}^{\vee\vee}
	\]
	is again a sheaf stable pair.
	Moreover, the cokernel sheaf \(\mathcal{Q}'=\coker(s^{\vee\vee})\) has the same Fitting support as
	\(\mathcal{Q}=\coker(s)\).
\end{corollary}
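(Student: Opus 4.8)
The plan is to verify the two defining conditions of a sheaf stable pair for \((\mathcal{E}^{\vee\vee},s^{\vee\vee})\) and then to compare the two Fitting supports. Much of the work is already contained in the proof of \zcref{lem:purity_of_cokernel}: the bottom row of the commutative diagram produced there is precisely the sequence \(0\to\mathcal{O}_Z^r\xrightarrow{s^{\vee\vee}}\mathcal{E}^{\vee\vee}\to\mathcal{Q}'\to 0\) (after identifying the dualising-sheaf double dual \(\mathcal{E}^{DD}\) with \(\mathcal{E}^{\vee\vee}\)), so the statement is essentially a repackaging of that diagram together with one codimension estimate.

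First I would check the stable pair axioms. Since \(\mathcal{E}\) is torsion-free, its double dual \(\mathcal{E}^{\vee\vee}\) is reflexive, hence torsion-free, and of the same rank \(r\). By functoriality of the double dual and the fact that \(\mathcal{O}_Z^r\) is already reflexive, the induced morphism factors as \(s^{\vee\vee}=\theta_{\mathcal{E}}\circ s\), where \(\theta_{\mathcal{E}}\colon\mathcal{E}\to\mathcal{E}^{\vee\vee}\) is the canonical injection. A rank count shows that \(s\) is injective (its kernel is a torsion subsheaf of \(\mathcal{O}_Z^r\), hence zero), so \(s^{\vee\vee}\) is injective as a composite of injections, and the same rank count forces \(\mathcal{Q}'=\coker(s^{\vee\vee})\) to be torsion, giving \(\dim\Supp(\mathcal{Q}')\leq n-1<\dim Z\). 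This establishes that \((\mathcal{E}^{\vee\vee},s^{\vee\vee})\) is again a sheaf stable pair.

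Next I would form the commutative ladder with the two short exact sequences as rows, whose vertical maps are the identity on \(\mathcal{O}_Z^r\), the injection \(\theta_{\mathcal{E}}\) on \(\mathcal{E}\), and the induced map \(\bar\theta\colon\mathcal{Q}\to\mathcal{Q}'\). Applying the snake lemma yields \(\ker(\bar\theta)=0\) and a canonical isomorphism \(\coker(\bar\theta)\cong\coker(\theta_{\mathcal{E}})\). Thus \(\mathcal{Q}\) embeds into \(\mathcal{Q}'\) with cokernel isomorphic to \(\coker(\theta_{\mathcal{E}})\).

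The decisive point is that \(\coker(\theta_{\mathcal{E}})\) is supported in codimension at least \(2\): a torsion-free coherent sheaf on a smooth variety is locally free outside a closed subset of codimension \(\geq 2\), so \(\theta_{\mathcal{E}}\) is an isomorphism at every codimension-one point. Consequently \(\mathcal{Q}\) and \(\mathcal{Q}'\) have isomorphic stalks at the generic point of every prime divisor, so the local presentations computing their Fitting ideals agree there and the vanishing orders \(m_D\) coincide; hence \(\mathcal{Z}_f(\mathcal{Q})=\mathcal{Z}_f(\mathcal{Q}')\). Equivalently, one may invoke \zcref{cor:fitting_support_cartier} to write \(\mathcal{Z}_f(\mathcal{Q})=\ch_1(\mathcal{E})=\ch_1(\mathcal{E}^{\vee\vee})=\mathcal{Z}_f(\mathcal{Q}')\), the middle equality again being the codimension-\(2\) comparison. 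The only mildly technical ingredient is this local-freeness-in-codimension-one statement, but it is entirely standard and already tacitly underlies \zcref{lem:purity_of_cokernel}, so I do not expect a genuine obstacle here.
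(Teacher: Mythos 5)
Your proof is correct and follows the route the paper clearly intends: the paper states this corollary without proof, but the ladder diagram comparing $\mathcal{E}$ with $\mathcal{E}^{\vee\vee}$ and the snake-lemma identification of $\coker(\mathcal{Q}\to\mathcal{Q}')$ with $\coker(\theta_{\mathcal{E}})$ is exactly the argument the paper itself runs in \zcref{lem:purity_of_cokernel} and \zcref{prop:relation_quot_scheme}, and your codimension-two comparison of Fitting supports is the content of \zcref{cor:fitting_support_cartier}. No gaps; this is essentially the same approach, just written out.
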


\begin{lemma}\label{lem:duality}
	Let \(Z\) be a smooth projective variety of dimension \(n\).
	Assume that \(\mathcal{E}\) is a locally free sheaf on \(Z\),
	and \(\mathcal{T}\) is a torsion sheaf on \(Z\) with zero-dimensional support.
	Then there is a natural isomorphism:
	\[
		\Hom(\mathcal{E}^{\vee},\SExt^n(\mathcal{T},\omega_Z)) \xlongrightarrow{\sim} \Hom(\mathcal{E},\mathcal{T})^{*},
	\]
	where \(\omega_Z\) is the canonical sheaf of \(Z\).
\end{lemma}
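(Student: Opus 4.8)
The plan is to reduce both sides to cohomology of sheaves supported on the finite set $\Supp\mathcal{T}$ and then invoke Serre duality. Throughout I would use that, because $\mathcal{E}$ is locally free, there are natural identifications $\SHom(\mathcal{E}^{\vee},-)\cong\mathcal{E}\otimes(-)$ and $\SHom(\mathcal{E},-)\cong\mathcal{E}^{\vee}\otimes(-)$, together with the fact that a locally free sheaf passes freely through $\SExt$, i.e.\ $\SExt^q(\mathcal{G}\otimes\mathcal{E}^{\vee},\omega_Z)\cong\SExt^q(\mathcal{G},\omega_Z)\otimes\mathcal{E}$ naturally in $\mathcal{G}$. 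Writing $\mathcal{T}'\coloneqq\SHom(\mathcal{E},\mathcal{T})=\mathcal{E}^{\vee}\otimes\mathcal{T}$, which is again a torsion sheaf with zero-dimensional support, these give the chain of natural isomorphisms
\[
\Hom(\mathcal{E}^{\vee},\SExt^n(\mathcal{T},\omega_Z))\cong H^0(Z,\mathcal{E}\otimes\SExt^n(\mathcal{T},\omega_Z))\cong H^0(Z,\SExt^n(\mathcal{T}',\omega_Z)).
\]

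Next I would collapse the local-to-global Ext spectral sequence $E_2^{p,q}=H^p(Z,\SExt^q(\mathcal{T}',\omega_Z))\Rightarrow\Ext^{p+q}(\mathcal{T}',\omega_Z)$. Two vanishing statements make it degenerate to a single term. First, since $Z$ is smooth of dimension $n$ and $\mathcal{T}'$ is supported in codimension $n$, one has $\SExt^q(\mathcal{T}',\omega_Z)=0$ for all $q\neq n$: nonvanishing of $\SExt^q$ forces $q\geq\codim\Supp\mathcal{T}'=n$, whereas the global homological dimension of $Z$ forces $q\leq n$. Second, the surviving sheaf $\SExt^n(\mathcal{T}',\omega_Z)$ is supported on the finite set $\Supp\mathcal{T}'$, so its higher cohomology vanishes. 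Hence the only nonzero term is $E_2^{0,n}$, and the edge map yields a natural isomorphism $H^0(Z,\SExt^n(\mathcal{T}',\omega_Z))\cong\Ext^n(\mathcal{T}',\omega_Z)$.

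Finally I would apply Serre duality on $Z$ in top degree, $\Ext^n(\mathcal{T}',\omega_Z)\cong H^0(Z,\mathcal{T}')^{*}$, and note that $H^0(Z,\mathcal{T}')=H^0(Z,\SHom(\mathcal{E},\mathcal{T}))=\Hom(\mathcal{E},\mathcal{T})$ because $\mathcal{E}$ is locally free. Composing the isomorphisms of the three paragraphs produces the asserted $\Hom(\mathcal{E}^{\vee},\SExt^n(\mathcal{T},\omega_Z))\cong\Hom(\mathcal{E},\mathcal{T})^{*}$; each arrow being canonical, the result is natural in $\mathcal{E}$ and $\mathcal{T}$.

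The step I expect to demand the most care is the collapse of the spectral sequence, and specifically the lower bound $\SExt^q(\mathcal{T}',\omega_Z)=0$ for $q<n$. This is the local statement that over a regular local ring $R$ of dimension $n$ a module of finite length has $\Ext^q_R(-,R)$ concentrated in degree $n$, which one extracts from the Auslander--Buchsbaum formula (or from Grothendieck local duality); globalising over the finitely many points of $\Supp\mathcal{T}'$ and twisting by the line bundle $\omega_Z$ then gives the sheaf-level vanishing. Everything else is a formal manipulation of the tensor-hom and $\SExt$ functors for the locally free sheaf $\mathcal{E}$. Alternatively, one could phrase the whole argument as the statement that $\SExt^n(-,\omega_Z)$ is an exact contravariant self-duality on the category of zero-dimensional sheaves, matched against $H^0$ by Serre duality.
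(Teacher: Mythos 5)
Your argument is correct and is essentially the paper's proof, just written out in more detail: the paper's three-line chain (tensor--hom adjunction for locally free $\mathcal{E}$, collapse of local-to-global Ext for a zero-dimensional sheaf, Serre duality) is exactly what you carry out, the only cosmetic difference being that you absorb $\mathcal{E}^{\vee}$ into the first argument of $\SExt$ (forming $\mathcal{T}'=\mathcal{E}^{\vee}\otimes\mathcal{T}$) where the paper absorbs $\mathcal{E}$ into the second (forming $\Ext^n(\mathcal{T},\mathcal{E}\otimes\omega_Z)$). Your explicit justification of the vanishing $\SExt^q(\mathcal{T}',\omega_Z)=0$ for $q\neq n$ supplies the one step the paper leaves implicit.
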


\begin{proof}
	This follows from adjoint property and Serre duality:
	\begin{align*}
		\Hom(\mathcal{E}^{\vee},\SExt^n(\mathcal{T},\omega_Z))
		& \cong H^0(\SExt^n(\mathcal{T},\omega_Z)\otimes \mathcal{E}) \\
		& \cong \Ext^n(\mathcal{T},\mathcal{E}\otimes\omega_Z) \\
		& \cong \Hom(\mathcal{E},\mathcal{T})^{*}. \qedhere
	\end{align*}
\end{proof}

\subsection*{Relation with Quot-schemes}

We now describe the relation between the moduli space of sheaf stable pairs and the corresponding Quot-schemes.

\begin{proposition}\label{prop:relation_quot_scheme}
	Let \(Z\) be a smooth projective surface.
	Then \(M_Z(r,\ch_1,\ch_2)\) is isomorphic to a subscheme of the Quot-scheme
	\(\Quot(\mathcal{O}_{Z}^r,0,\ch_1,-\ch_2)\).
\end{proposition}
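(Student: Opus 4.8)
The plan is to construct the isomorphism by sending a sheaf stable pair to the quotient determined by the dual of its defining data, and to check that this lands in (and identifies $M_Z$ with) a locally closed subscheme of the Quot-scheme.

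First I would describe the map on points. Given a sheaf stable pair $\mathcal{O}_Z^r\xrightarrow{s}\mathcal{E}$ with cokernel $\mathcal{Q}$, the key is that $s$ is injective (since $\mathcal{E}$ is torsion-free and $\mathcal{Q}$ has support of dimension $<\dim Z$), giving a short exact sequence
\[
0\longrightarrow \mathcal{O}_Z^r\longrightarrow \mathcal{E}\longrightarrow \mathcal{Q}\longrightarrow 0.
\]
Applying $\SHom(-,\mathcal{O}_Z)$ and recalling that $\mathcal{O}_Z^r$ is self-dual, one obtains a map $\mathcal{E}^{\vee}\to \mathcal{O}_Z^r$ whose cokernel $\mathcal{Q}'\coloneqq \SExt^1(\mathcal{Q},\mathcal{O}_Z)$ is again supported in codimension $1$ (this uses the purity from \zcref{lem:purity_of_cokernel}, here on a surface so $\dim\Supp\mathcal{Q}'=1$). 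Thus every stable pair produces a quotient $\mathcal{O}_Z^r\twoheadrightarrow \mathcal{Q}'$, i.e.\ a point of $\Quot(\mathcal{O}_Z^r,P)$ for the appropriate Hilbert polynomial. I would compute the numerical invariants of $\mathcal{Q}'$ via Grothendieck–Riemann–Roch applied to the dualising functor: since $\mathcal{Q}'$ is a codimension-one sheaf dual to $\mathcal{Q}$, its Chern character has the same $\ch_1$ and opposite $\ch_2$ up to the correction coming from $\ch(\mathcal{E})$, yielding exactly the invariants $(0,\ch_1,-\ch_2)$ recorded in the statement.

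Next I would promote this pointwise construction to a morphism of schemes, and then to an isomorphism onto its image. The cleanest route is to exhibit mutually inverse families. In one direction, over the moduli functor one has the universal pair $\mathcal{O}_{Z\times M}^r\to \mathscr{E}$; applying relative $\SHom$ (equivalently, the relative dualising functor) produces a flat family of quotients of $\mathcal{O}_Z^r$ and hence, by the universal property of the Quot-scheme, a morphism $M_Z(r,\ch_1,\ch_2)\to \Quot(\mathcal{O}_Z^r,0,\ch_1,-\ch_2)$. In the reverse direction, I would recover $\mathcal{E}$ from a quotient $\mathcal{O}_Z^r\twoheadrightarrow \mathcal{Q}'$ by dualising back: the kernel $\mathcal{E}^{\vee}$ of $\mathcal{O}_Z^r\twoheadrightarrow \mathcal{Q}'$ is torsion-free, and its dual $\mathcal{E}\coloneqq (\mathcal{E}^{\vee})^{\vee}$ together with the induced $\mathcal{O}_Z^r\to \mathcal{E}$ reconstructs the original pair, since on a smooth surface a rank-$r$ torsion-free sheaf agrees with its double dual away from finitely many points and dualising is involutive on reflexive sheaves. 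This yields a two-sided inverse on the locus of the Quot-scheme where the kernel has the correct rank $r$ and the quotient is pure of codimension one, which cuts out the subscheme in question.

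The main obstacle, I expect, is the scheme-theoretic (as opposed to set-theoretic) identification: verifying that the duality construction is functorial in flat families and that the subscheme it picks out in the Quot-scheme carries the same scheme structure as $M_Z$. The delicate point is flatness and base-change compatibility of the relative $\SExt$ functor—one must ensure $\SExt^i$ in the intermediate degrees vanish so that forming the dual commutes with restriction to fibres, which is where purity of $\mathcal{Q}$ (codimension exactly one, no embedded points) is essential. I would handle this by working Zariski-locally and using that, for a pure codimension-one sheaf on a smooth surface, the dualising sheaf $\SExt^1(-,\mathcal{O}_Z)$ is again pure of the same dimension with vanishing higher $\SExt$, so the two moduli problems are represented by canonically isomorphic functors on the relevant open-and-closed conditions.
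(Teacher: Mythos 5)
Your construction works only on the locus where \(\mathcal{E}\) is locally free, and this is a genuine gap rather than a technicality. On a smooth surface \(\mathcal{E}^{\vee}\) is automatically reflexive, hence locally free, so \((\mathcal{E}^{\vee})^{\vee}=\mathcal{E}^{\vee\vee}\) is the reflexive hull of \(\mathcal{E}\), not \(\mathcal{E}\) itself, whenever \(\mathcal{E}\) is torsion-free but not locally free. Since \(s^{\vee}=(s^{\vee\vee})^{\vee}\), the quotient \(\coker(s^{\vee})\) you propose depends only on the pair \((\mathcal{E}^{\vee\vee},s^{\vee\vee})\): your map collapses all torsion-free subsheaves of a fixed reflexive sheaf to a single point, so it cannot be injective, and your proposed inverse ``dualise back'' reconstructs the wrong sheaf. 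These non-locally-free pairs are not a negligible set --- by \zcref{prop:M_f'_and_quot} they form an entire component isomorphic to \(\Quot(\mathcal{O}_C^2,d)\). The numerics also fail on that locus: writing \(\ell\) for the length of \(\mathcal{E}^{\vee\vee}/\mathcal{E}\), one has \(\ch(\coker(s^{\vee}))=(0,\ch_1,-\ch_2-\ell)\), so your quotient does not even lie in \(\Quot(\mathcal{O}_{Z}^r,0,\ch_1,-\ch_2)\) unless \(\ell=0\).

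The paper's proof repairs exactly this point. After forming \(\mathcal{F}=\coker(s^{\vee})\) it does not stop there: using the duality of \zcref{lem:duality} it converts the zero-dimensional sheaf \(\mathcal{T}'=\mathcal{E}^{\vee\vee}/\mathcal{E}\) into a surjection \(\mathcal{E}^{\vee}\to\mathcal{T}\coloneqq\SExt^2(\mathcal{T}',\omega_Z)\) and replaces \(\mathcal{F}\) by an extension \(0\to\mathcal{T}\to\mathcal{L}\to\mathcal{F}\to 0\). The torsion subsheaf \(\mathcal{T}\subseteq\mathcal{L}\) is precisely the extra datum recording where and how \(\mathcal{E}\) fails to be reflexive; it restores both the injectivity of the correspondence and the correct Chern character \((0,\ch_1,-\ch_2)\). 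Conversely, going from a quotient \(\mathcal{O}_Z^r\to\mathcal{L}\) back to a pair, one must first split off the maximal torsion subsheaf of \(\mathcal{L}\) and use its \(\SExt^2\)-dual to cut \(\mathcal{E}\) out of \(\mathcal{K}'^{\vee}\), rather than simply dualising the kernel. Your concerns about base change for relative \(\SExt\) are reasonable but secondary; the primary defect is the loss of information under double dualisation.
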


\begin{proof}
	\emph{From a sheaf stable pair to a quotient.}
	Let \(\mathcal{E},s\) be a sheaf stable pair of rank \(r\) on \(Z\).
	We have a commutative diagram with exact rows
	\[
		\begin{tikzcd}
			0 \ar[r] & \mathcal{O}_Z^r \ar[r,"s"] \ar[d,"\cong"] & \mathcal{E} \ar[r] \ar[d,hookrightarrow] & \mathcal{Q}
			\ar[r] \ar[d] & 0 \\
			0 \ar[r] & \mathcal{O}_Z^r \ar[r,"s^{\vee\vee}"] & \mathcal{E}^{\vee\vee} \ar[r] & \mathcal{Q}' \ar[r] & 0
		\end{tikzcd}
	\]
	where \(\mathcal{Q}'=\coker(s^{\vee\vee})\).
	By the snake lemma, \(\mathcal{Q}\to\mathcal{Q}'\) is injective,
	and the middle and right vertical arrows have a common cokernel \(\mathcal{T}'\).
	Applying \(\SHom(-,\mathcal{O}_{Z})\) to the top row, we obtain
	\[
		0\longrightarrow \mathcal{E}^{\vee}\xlongrightarrow{s^{\vee}} \mathcal{O}_{Z}^r\longrightarrow
		\mathcal{F} \longrightarrow 0
	\]
	where \(\mathcal{F}\) is the image of \(\mathcal{O}_{Z}^r\to\SExt^1(\mathcal{Q},\mathcal{O}_{Z})\).
	By \zcref{lem:duality} there is a surjection
	\(\mathcal{E}^{\vee}\to\mathcal{T}\coloneqq\SExt^2(\mathcal{T}',\omega_Z)\).
	Let \(\mathcal{K}\) be its kernel.
	We obtain a commutative diagram with exact rows
	\[
		\begin{tikzcd}
			0 \ar[r] & \mathcal{K} \ar[r] \ar[d] & \mathcal{O}_{Z}^r \ar[r] \ar[d,"="] & \mathcal{L}
			\ar[r] \ar[d] & 0 \\
			0 \ar[r] & \mathcal{E}^{\vee} \ar[r,"s^{\vee}"] & \mathcal{O}_{Z}^r \ar[r] & \mathcal{F} \ar[r] & 0
		\end{tikzcd}
	\]
	where \(\mathcal{L}\) is the cokernel of the composition
	\(\mathcal{K}\hookrightarrow \mathcal{E}^{\vee}\xrightarrow{s^{\vee}}\mathcal{O}_{Z}^r\).
	Then \(\mathcal{L}\to\mathcal{F}\) is surjective with kernel isomorphic to \(\mathcal{T}\).
	Thus the quotient \(\mathcal{O}_{Z}^r\to \mathcal{L}\) defines a point of
	\(\Quot(\mathcal{O}_{Z}^r,0,\ch_1(\mathcal{E}),\ch_2(\mathcal{E}))\).

	\emph{From a quotient to a sheaf stable pair.}
	Conversely, start with a quotient \(\mathcal{O}_{Z}^r\to \mathcal{L}\) and
	let \(\mathcal{K}\) be its kernel.
	Let \(\mathcal{T}\) be the maximal torsion subsheaf of \(\mathcal{L}\),
	so that \(\mathcal{F}\coloneqq\mathcal{L}/\mathcal{T}\) is pure (cf.~\cite[Definition~1.1.4]{huybrechts2010geometry}).
	We have a commutative diagram with exact rows
	\[
		\begin{tikzcd}
			0 \ar[r] & \mathcal{K} \ar[r] \ar[d] & \mathcal{O}_{Z}^r \ar[r] \ar[d,"="] & \mathcal{L}
			\ar[r] \ar[d] & 0 \\
			0 \ar[r] & \mathcal{K}' \ar[r,"t"] & \mathcal{O}_{Z}^r \ar[r] & \mathcal{F} \ar[r] & 0.
		\end{tikzcd}
	\]
	Again by the snake lemma, the cokernel of \(\mathcal{K}\to\mathcal{K}'\) is isomorphic to \(\mathcal{T}\).
	By \zcref{lem:duality}, there is an induced surjection
	\(\mathcal{K}'^{\vee}\to\mathcal{T}'\coloneqq\SExt^2(\mathcal{T},\omega_Z)\).
	If the composition \(\mathcal{O}_{Z}^r\xrightarrow{t^{\vee}} \mathcal{K}'^{\vee}\to \mathcal{T}'\) is zero,
	then letting \(\mathcal{E}\) be the kernel of \(\mathcal{K}'^{\vee}\to \mathcal{T}'\)
	gives an injection \(\mathcal{O}_{Z}^r\to \mathcal{E}\).
	Thus we obtain a commutative diagram with exact rows
	\[
		\begin{tikzcd}
			0 \ar[r] & \mathcal{O}_Z^r \ar[r,"s"] \ar[d,"="] & \mathcal{E} \ar[r] \ar[d] & \mathcal{Q}
			\ar[r] \ar[d] & 0 \\
			0 \ar[r] & \mathcal{O}_Z^r \ar[r] & \mathcal{K}'^{\vee} \ar[r] & \mathcal{Q}' \ar[r] & 0.
		\end{tikzcd}
	\]
	Then \(\mathcal{E},s\) is a sheaf stable pair and
	\([\mathcal{E},s]\in M_Z(r,\ch_1(\mathcal{L}),\ch_2(\mathcal{L}))\).

	One checks that the respective equivalence relations on sheaf stable pairs and on quotients are compatible,
	which shows that \(M_Z(r,\ch_1,\ch_2)\) embeds as a subscheme of \(\Quot(\mathcal{O}_{Z},0,\ch_1,\ch_2)\).
\end{proof}

\begin{remark}\label{rmk:locally_free_equivalence}
	In the correspondence above,
	\(\mathcal{E}\) is locally free if and only if \(\mathcal{K}\) is locally free,
	if and only if \(\mathcal{L}\) is pure.
\end{remark}

\subsection*{Associated (stable minimal) models}

\begin{definition}
	Assume \([\mathcal{E},s]\) is a stable pair of rank \(r\) on a variety \(Z\),
	with \(\mathcal{E}\) \emph{locally free}.
	Let \(s_{1}, \dots, s_{r}\) be the sections of \(\mathcal{E}\) determined by \(s\).
	We set
	\[
		\begin{array}{l}
			X = \mathbb{P}(\mathcal{E}) \xlongrightarrow{f} Z, \\
			\mathcal{O}_{X}(1) = \text{the associated invertible sheaf}, \\
			D_{i} = \text{divisor of } s_{i}, \\
			A = \text{divisor of } s_1+\dots+s_r.
		\end{array}
	\]
	Here we view \(s_{i}\) as sections of \(\mathcal{O}_{X}(1)\),
	so \(D_{i}\) is the divisor of this section (similarly for \(s_1+\dots+s_r\)).
	We call
	\[
		X, D_1,\dots,D_r, A \xlongrightarrow{f} Z
	\]
	the \emph{model associated to \([\mathcal{E},s]\)}.
\end{definition}

It is possible to modify the above model birationally and get a stable minimal model over the whole \(Z\).

\begin{theorem}\label{thm:existence_stable_minimal_model}
	Let \(\mathcal{E},s\) be a sheaf stable pair of rank \(r\) on a smooth projective surface \(Z\).
	Then the associated stable minimal model
	\[
		(X',B'), A' \longrightarrow Z'
	\]
	exists and it depends only on \(Z,r\) and \(\Supp\mathcal{Q}\).
\end{theorem}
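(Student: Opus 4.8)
The plan is to realise the associated model as a relatively log Calabi–Yau fibration over \(Z\), run the minimal model program to reach a log canonical model, and read off the stable minimal model from it; the dependence statement will then follow from uniqueness of the lc model together with triviality of the moduli part of the fibration.

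First I would reduce to the locally free case. On the smooth surface \(Z\) the reflexive hull \(\mathcal E^{\vee\vee}\) is locally free, and by the duality corollary preceding \zcref{lem:duality} the pair \(\mathcal O_Z^r \xrightarrow{s^{\vee\vee}} \mathcal E^{\vee\vee}\) is again a sheaf stable pair with the same Fitting support; this changes neither \(\Supp\mathcal Q\) nor \(r\), so I may assume \(\mathcal E\) locally free and form the associated model \(X = \mathbb P(\mathcal E)\xrightarrow{f} Z\), \(D_1,\dots,D_r, A\). Writing \(B = \sum_{i=1}^r D_i\), each \(D_i\in\lvert\mathcal O_X(1)\rvert\), so \(B\in\lvert\mathcal O_X(r)\rvert\), and the relative canonical bundle formula \(\omega_{X/Z}\cong\mathcal O_X(-r)\otimes f^*\det\mathcal E\) gives the key identity
\[
 K_X+B \;=\; f^*\bigl(K_Z+\det\mathcal E\bigr),
\]
where \(\det\mathcal E\sim\mathcal Z_f(\mathcal Q)=\ch_1(\mathcal E)\) is effective and supported on \(C\coloneqq\Supp\mathcal Q\) by \zcref{cor:fitting_support_cartier}. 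Thus \((X,B)\to Z\) is a relatively log Calabi–Yau fibration with fibres \((\mathbb P^{r-1},\sum_{i=1}^r H_i)\), and \(K_X+B\) is a pullback from \(Z\), hence \(f\)-trivial and semi-ample over \(Z\).

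Next I would produce the five conditions of \zcref{def:lc_stable_minimal_model}. Away from \(C\) the sections trivialise \(\mathcal E\), so \((X,B)\) restricts to the constant product \((\mathbb P^{r-1},\sum H_i)\times (Z\setminus C)\), which is log smooth; the pair can fail to be lc only over \(C\). I would take a log resolution adapted to the degeneration of the \(D_i\) along \(C\) (concretely, Maruyama elementary transformations of the \(\mathbb P^{r-1}\)-bundle, whose numerics are controlled by \zcref{prop:self_intersection_blowup_curve,cor:ruled_surface_inv}), thereby capping all coefficients at \(\leq 1\), then run the relative MMP for \(K_X+B+uA\) over \(Z\) with \(0<u\ll1\), using \(A\) as the relatively big polarising divisor and invoking existence of minimal models \cite{BirkarCasciniHaconMcKernan2010}. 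Simultaneously I would replace \(Z\) by the relative lc model \(Z'\) of the base surface. The output \((X',B'),A'\to Z'\) then satisfies: \((X',B')\) is lc; \(K_{X'}+B'\) is still a pullback from \(Z'\), hence \(f'\)-trivial and semi-ample over \(Z'\); \(A'\) is integral and effective; \(K_{X'}+B'+uA'\) is ample over \(Z'\) for \(0<u\ll1\) by the ample-model property of \(A'\); and \((X',B'+uA')\) is lc for \(0<u\ll1\), being a small perturbation of the lc pair \((X',B')\). This gives existence. For the dependence statement I would use that the generic fibre of \(r\) general hyperplanes is unique up to \(\PGL_r\), so the fibration has no moduli and the moduli part of the canonical bundle formula vanishes; the base boundary is then a genuine discriminant supported on \(C\) with coefficients determined by \(r\) and the generic degeneration type. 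Over \(Z'\setminus C'\) the family is the fixed standard product, independent of the individual \([\mathcal E,s]\). Since the relative lc model is unique (pinned down by the inequality \(\alpha^*(K_X+B)\geq\beta^*(K_Y+B_Y)\) in its definition), the stable minimal model is the unique lc completion of this standard family along \(C\); both the completion and \(Z'\) depend only on \(Z\), \(r\) and \(C=\Supp\mathcal Q\), so pairs sharing this data yield isomorphic stable minimal models.

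The hard part will be this dependence statement: showing that the lc model forgets the fine structure of \((\mathcal E,s)\) near \(C\)—notably the higher multiplicities in \(\ch_1(\mathcal E)\)—and retains only the support. This splits into two interlocking difficulties: verifying that the moduli part is genuinely trivial, so that lc-ness caps the base boundary at coefficients \(\leq 1\) and the multiplicities are washed out; and carrying out the local normalisation along \(C\) by elementary transformations so that the normalised local model depends only on \(r\) and the local type of \(C\). Matching this local normalisation to the global relative lc model, and checking termination of the MMP in the relevant dimension \(r+1\), is where the real work lies; here I expect to lean on the general construction of \cite{birkar2024sheaf} together with the ruled-surface computations of \zcref{prop:self_intersection_blowup_curve,cor:ruled_surface_inv}.
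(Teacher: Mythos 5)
There is a genuine gap, and it sits exactly where you locate it yourself: the dependence statement. Your construction starts from \(X=\mathbb P(\mathcal E)\) with \(B=\sum D_i\) and the identity \(K_X+B=f^*(K_Z+\det\mathcal E)\), where \(\det\mathcal E=\mathcal Z_f(\mathcal Q)\) carries the full multiplicities of \(\ch_1(\mathcal E)\) (e.g.\ \(2C\) in the situations of \zcref{sec:negative_curves}). When a component of \(\Supp\mathcal Q\) appears with multiplicity \(\geq 2\), the pair \((Z,\det\mathcal E)\) is not lc, hence neither is \((X,B)\), and your proposed fix — ``take a log resolution \dots thereby capping all coefficients at \(\leq 1\)'' — does not work: a log resolution changes nothing about the crepant pullback, whose coefficients along the relevant divisors remain \(>1\). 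So you cannot legitimately run the relative MMP for \(K_X+B+uA\) or speak of the lc model of this pair, and the mechanism by which the multiplicities are supposed to be ``washed out'' is never supplied. The subsidiary claim that \((X',B'+uA')\) is lc because it is ``a small perturbation of the lc pair \((X',B')\)'' is also not automatic and would need \(A'\) to avoid the non-klt centres.

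The paper sidesteps all of this: it never runs an MMP on \(\mathbb P(\mathcal E)\). It takes the minimal log resolution \(\widetilde Z\to Z\) of \((Z,Q)\) with \(Q=\Supp\coker(s)\) \emph{reduced}, forms the model associated to the trivial pair \(\mathcal O_{\widetilde Z}^r\xrightarrow{\id}\mathcal O_{\widetilde Z}^r\) (so \(\widetilde X'=\widetilde Z\times\mathbb P^{r-1}\) with the coordinate hyperplanes as boundary, not your \(D_i\)), adds \(\widetilde f'^*\widetilde Q\), and observes that \(K_{\widetilde X'}+\widetilde B'\) is the pullback of \(K_{\widetilde Z}+\widetilde Q\), which is already semi-ample over \(Z\); the output is the trivial bundle over the relative lc model \(Z'\) of \((Z,\Supp Q)\). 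The dependence on \(Z\), \(r\) and \(\Supp\mathcal Q\) alone is then true \emph{by construction}, rather than being the conclusion of a delicate canonical-bundle-formula argument. Your intuition that the generic fibre \((\mathbb P^{r-1},\sum H_i)\) is rigid and the family is a standard product away from \(\Supp\mathcal Q\) is the right heuristic — it is essentially why the paper's definition is reasonable — but as a proof strategy it leaves the key step (that the lc model of your non-lc input retains only the reduced support) unestablished.
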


\begin{proof}
	Let \(Q=\Supp\coker(s)\) and \(U=Z\setminus Q\).
	Over \(U\), the restricted sheaf stable pair \(\mathcal{E}|_U,s|_U\) has an associated model
	\[
		X^U,D^U_{1}, \dots, D^U_{r}, A^U\longrightarrow U.
	\]

	Let \(\pi\colon (\widetilde{Z},\widetilde{Q})\to (Z,Q)\) be the minimal log resolution
	(\cite[Proposition~1.13.7]{kawamata2024algebraic}).
	Then \(\widetilde{Z}\) is smooth and \(\widetilde{Q}\) is simple normal crossing.
	The pullback	\(\pi^{*}\mathcal{E},\pi^{*}s\) is a sheaf stable pair on \(\widetilde{Z}\),
	with \(\Supp\coker(\pi^{*}s)=\widetilde{Q}\).
	Consider the identity morphism \(\mathcal{O}_{\widetilde{Z}}^r\to\mathcal{O}_{\widetilde{Z}}^r\)
	and its associated model
	\[
		\widetilde{X}',\widetilde{D}'_{1},\dots,\widetilde{D}'_{r},\widetilde{A}'\xlongrightarrow{\widetilde{f}}
		\widetilde{Z}.
	\]
	Let \(\widetilde{B}'=\sum\widetilde{D}_i'+\widetilde{f}'^{*}\widetilde{Q}\).
	Then \(\widetilde{X}'\) is a compactification of \(X^U\) over \(Z\),
	and the pair \((\widetilde{X}',\widetilde{B}'+\widetilde{A}')\) is log smooth.
	Since
	\[
		K_{\widetilde{X}'}+\widetilde{B}'=\widetilde{f}^{*}(K_{\widetilde{Z}}+\widetilde{Q}),
	\]
	any \(K_{\widetilde{X}'}+\widetilde{B}'\)-MMP over \(Z\) is induced by
	an \(K_{\widetilde{Z}}+\widetilde{Q}\)-MMP on \(\widetilde{Z}\) over \(Z\),
	while the latter is also an \(K_{\widetilde{Z}}+\widetilde{Q}\)-MMP over \(Z\).
	But \(K_{\widetilde{Z}}+\widetilde{Q}\) is already nef and semi-ample over \(Z\);
	hence any such MMP is trivial.
	Thus \(K_{\widetilde{X}'}+\widetilde{B}'\) is semi-ample over \(Z\),
	defining a contraction \(\widetilde{X}'\to Z'\to Z\),
	where \((Z',Q')\) is the relative lc model of \((Z,Q)\) (cf.\cite[Theorem~7.2]{fujino2012minimal}).
	Moreover, \(\widetilde{A}'\) is semi-ample over \(Z'\), inducing a birational contraction
	\(\widetilde{X}'=\widetilde{Z}\times\mathbb{P}^1\to Z'\times\mathbb{P}^1\).

	Now consider the identity morphism \(\mathcal{O}_{Z'}^r\to \mathcal{O}_{Z'}^r\) and its associated model
	\[
		X',D'_{1}, \dots, D'_{r},A' \xlongrightarrow{f'} Z'.
	\]
	Let \(B'=\sum D_i'+f'^{*}Q'\).
	Then \((X',B'),A'\xrightarrow{f'}Z'\) is the associated stable minimal model of \(\mathcal{E},s\),
	and by construction it depends only on \(Z\), \(r\) and \(Q=\Supp\coker(s)\).
\end{proof}

Finally, viewing each \(s_i\) as a section of \(\mathcal{O}_X(1)\),
the common zero locus
\[
	T\coloneqq V(s_1,\dots,s_r)=D_1\cap\dots\cap D_r
\]
maps onto \(\mathcal{Z}_f(\mathcal{Q})\) via \(f\)
(see \cite[\S~14.4]{fulton1998intersection}).

\begin{proposition}\label{prop:image_of_intersection}
	In the case \(r=2\), assume \(D_1\) and \(D_2\) have no common (vertical) component.
	Then \(f_*(D_1\cdot D_2)=\mathcal{Z}_f(\mathcal{Q})\) as cycles on \(Z\).
\end{proposition}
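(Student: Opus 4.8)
The plan is to reduce the global identity to a comparison of multiplicities along each prime component of the common support, and then to carry out that comparison in the discrete valuation ring at the generic point of such a component.

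I would first record the two structural facts that make the statement well posed. Since \(\dim\Supp\mathcal{Q}<2\), the sections \(s_1,s_2\) are generically linearly independent, so \(D_1,D_2\in\lvert\mathcal{O}_X(1)\rvert\) share no horizontal component; by hypothesis they also share no vertical component, so \(D_1\cap D_2\) has pure codimension two in \(X\) and \(D_1\cdot D_2\) is a genuine \(1\)-cycle. Over a point of \(Z\) where \(s_1,s_2\) are independent, the two linear forms they induce on the fibre \(\mathbb{P}^1\) have no common zero, whence \(D_1\cap D_2\subseteq f^{-1}(\Supp\mathcal{Q})\). Thus both \(f_*(D_1\cdot D_2)\) and \(\mathcal{Z}_f(\mathcal{Q})=\div(s_1\wedge s_2)\) (see \zcref{exa:fitting_support_determinant,cor:fitting_support_cartier}) are effective divisors supported on \(\Supp\mathcal{Q}\), and it suffices to match the coefficient of each prime divisor \(C\subseteq\Supp\mathcal{Q}\).

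Next I would localise at the generic point \(\eta\) of \(C\). Put \(R=\mathcal{O}_{Z,\eta}\), a DVR with uniformiser \(t\), and trivialise \(\mathcal{E}\otimes R\cong R^2\), so that \(X_R=\Proj R[y_0,y_1]\) and \(s_i\) becomes the linear form \(\ell_i=a_iy_0+b_iy_1\) with \(s_1\wedge s_2=a_1b_2-a_2b_1=:\delta\). By \zcref{exa:fitting_support_determinant} the coefficient of \(C\) in \(\mathcal{Z}_f(\mathcal{Q})\) is \(m_C=\ord_t(\delta)\). Writing \(s_i=t^{e_i}\tilde{s}_i\) with \(\tilde{s}_i\) primitive, the absence of a common vertical component says exactly that \(\min(e_1,e_2)=0\); after relabelling I may assume \(e_2=0\), so that \(\ell_2\) defines a section \(\sigma_2=[b_2:-a_2]\in\mathbb{P}^1(R)\) and \(D_2\cong\Spec R\) is a Cartier section of \(X_R\to\Spec R\).

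I would then compute the intersection by restricting \(\ell_1\) to this section. As \(D_1,D_2\) are Cartier divisors meeting properly on the regular (hence Cohen--Macaulay) surface \(X_R\), their local equations form a regular sequence, so the intersection cycle is the fundamental cycle of the scheme \(V(\ell_1,\ell_2)\); and because \(D_2\cong\Spec R\), this scheme is the zero scheme of \(\ell_1|_{D_2}\). Evaluating \(\ell_1\) at \(\sigma_2\) yields \(a_1b_2-a_2b_1=\delta\) up to a unit, hence \(V(\ell_1,\ell_2)\cong R/(t^{m_C})\) has length \(m_C\) over \(R\) and is finite over \(\Spec R\). Pushing forward, the coefficient of \(C\) in \(f_*(D_1\cdot D_2)\) is \(m_C\); running \(C\) over all components gives the asserted equality of cycles.

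The point I expect to be the main obstacle is that this is a cycle-level, not a class-level, statement. The Grothendieck relation gives \(f_*(\xi^2)=c_1(\mathcal{E})\) for \(\xi=c_1(\mathcal{O}_X(1))\) at once, but only up to rational equivalence, so it cannot by itself pin down multiplicities. The real content lies in the local bookkeeping at each \(C\), where one must separate the vertical (whole-fibre) contribution encoded in the factors \(t^{e_i}\) from the genuinely horizontal intersection. The hypothesis of no common vertical component is precisely what forces one factor to be a section and collapses both contributions into the single valuation \(\ord_t(s_1\wedge s_2)\); without it \(D_1\cap D_2\) would contain a whole ruled surface \(f^{-1}(C)\) and the intersection product would not even be a well-defined \(1\)-cycle.
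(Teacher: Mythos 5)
Your proof is correct, and it takes a genuinely different route from the paper's. The paper argues via Fulton's intersection theory: the section of \(\mathcal{O}_{X}(1)^{\oplus 2}\) induced by \(s\) has zero scheme \(T=D_1\cap D_2\), which has the expected codimension by the no-common-component hypothesis, and the localized Chern class machinery of \cite[Theorem~14.4]{fulton1998intersection} together with the Grothendieck relation \(f_*\xi^2=c_1(\mathcal{E})\) identifies \(f_*[T]\) with the degeneracy cycle \(\mathcal{Z}_f(\mathcal{Q})\); to sharpen this from cycle classes to cycles the paper additionally invokes the existence of a purely horizontal member of \(\abs{\mathcal{O}_{X}(1)}\) (\zcref{lem:pure_horizontal}). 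You instead reduce to a coefficient comparison at the generic point of each prime divisor \(C\subseteq\Supp\mathcal{Q}\) and compute both multiplicities in the DVR \(R=\mathcal{O}_{Z,\eta}\): the hypothesis forces \(\min(e_1,e_2)=0\), so one of \(D_1,D_2\) restricts to a section of \(\mathbb{P}^1_R\to\Spec R\), and evaluating the other linear form on that section yields exactly \(\ord_t(s_1\wedge s_2)=m_C\); your bookkeeping of lengths and residue degrees in the pushforward is also right. Your version is more elementary and self-contained, makes the role of the hypothesis completely transparent at each codimension-one point, and establishes the equality of cycles directly rather than passing through rational equivalence; it also does not use \zcref{lem:pure_horizontal}, which the paper's sketch cites forward and whose own proof depends (via \zcref{rmk:vertical_image}) on this very proposition, so your argument in fact removes an apparent circularity. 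What the paper's approach buys in exchange is brevity, given the Fulton machinery, and an argument that adapts immediately to degeneracy loci in higher rank \(r>2\).
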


Since \(D_1,D_2\) have no common component,
the intersection \(T=D_1\cap D_2\) is of codimension \(2\) in \(X\).
This is clear for the horizontal component of \(T\) since it is mapped to \(Q\).

The localised Chern class of \(f^{*}\mathcal{E}\otimes\mathcal{O}_{X}(-1)\) is \([V(s)]\),
which is also horizontal.
By \cite[Theorem~14.4.(c)]{fulton1998intersection},
we obtain the equality of cycles \(f_*([T])=[Q]\).
Indeed, we can find some divisor in the linear system \(\mathcal{O}_{X}(1)\),
which is purely horizontal (see \zcref{lem:pure_horizontal} below).
This means \(f_*\) maps \([T]\) isomorphically to \([Q]\).

\section{Geometric treatment}\label{sec:geometric_treatment}

Let \(Z\) be a normal algebraic variety.
Given a sheaf stable pair \(\mathcal{E},s\) on \(Z\) with \(\mathcal{E}\) locally free,
we have the following rigidity statement.

\begin{lemma}
	Suppose that \(\mathcal{E},s\) and \(\mathcal{G},t\) are sheaf stable pairs on a normal variety \(Z\),
	with \(\mathcal{E}\) and \(\mathcal{G}\) locally free.
	Let
	\[
		X,D_{1}, \dots, D_{r},A \to Z \quad \text{and} \quad Y,E_{1}, \dots, E_{r},C \to Z
	\]
	be the associated models of \(\mathcal{E},s\) and \(\mathcal{G},t\), respectively.
	If \(X,D_{1}, \dots, D_{r},A\) and \(Y,E_{1}, \dots, E_{r},C\) are isomorphic over a big open subset \(U\subseteq Z\),
	then \([\mathcal{E},s]=[\mathcal{G},t]\).
\end{lemma}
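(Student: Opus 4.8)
The plan is to reconstruct the sheaf stable pair from the data of its associated model, using crucially that over a normal base a locally free (hence reflexive) sheaf is determined by its restriction to a big open set. First I would reduce everything to \(U\). Since \(\mathcal{E}\) and \(\mathcal{G}\) are locally free on the normal variety \(Z\), they are reflexive, so \(\mathcal{E}=j_*(\mathcal{E}|_U)\) and \(\mathcal{G}=j_*(\mathcal{G}|_U)\) for the inclusion \(j\colon U\hookrightarrow Z\) of the big open subset, and likewise \(H^0(Z,\mathcal{E})=H^0(U,\mathcal{E}|_U)\), \(H^0(Z,\mathcal{G})=H^0(U,\mathcal{G}|_U)\). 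Hence any isomorphism \(\mathcal{E}|_U\xrightarrow{\sim}\mathcal{G}|_U\) carrying \(s|_U\) to \(t|_U\) extends uniquely to an isomorphism over \(Z\) carrying \(s\) to \(t\); so it suffices to produce the equivalence of pairs over \(U\).

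Second, I would recover the vector bundle isomorphism from the projective bundle together with its \(\mathcal{O}(1)\). Over \(U\) the hypothesis gives an isomorphism \(\phi_U\colon X_U=\mathbb{P}(\mathcal{E}|_U)\xrightarrow{\sim}Y_U=\mathbb{P}(\mathcal{G}|_U)\) over \(U\) with \(\phi_U^*E_i=D_i\) and \(\phi_U^*C=A\). Each \(D_i\) (resp.\ \(E_i\)) is the divisor of a section of \(\mathcal{O}_{X_U}(1)\) (resp.\ \(\mathcal{O}_{Y_U}(1)\)), so \(\mathcal{O}_{X_U}(D_i)=\mathcal{O}_{X_U}(1)\) and \(\mathcal{O}_{Y_U}(E_i)=\mathcal{O}_{Y_U}(1)\); the relation \(\phi_U^*E_i=D_i\) therefore yields \(\phi_U^*\mathcal{O}_{Y_U}(1)\cong\mathcal{O}_{X_U}(1)\), i.e.\ no nontrivial twist appears. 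Writing \(f=g\circ\phi_U\) for the two bundle projections and applying \(f_*\) together with the projection formula \((\phi_U)_*\phi_U^*\mathcal{O}_{Y_U}(1)\cong\mathcal{O}_{Y_U}(1)\), I obtain \(\mathcal{E}|_U=f_*\mathcal{O}_{X_U}(1)\cong g_*\mathcal{O}_{Y_U}(1)=\mathcal{G}|_U\); call this isomorphism \(\psi\). On global sections \(\psi\) is the map \(H^0(\mathcal{O}_{X_U}(1))\to H^0(\mathcal{O}_{Y_U}(1))\) induced by \(\phi_U\).

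The crux is the third step: rigidifying the scalars so that \(\psi\) matches the sections exactly. A priori \(\psi\) is only canonical up to an overall unit on \(U\), and \(\psi(s_i)=u_it_i\) for units \(u_i\in\mathcal{O}_U^*(U)\), because \(\psi(s_i)\) and \(t_i\) are two sections of \(\mathcal{O}_{Y_U}(1)\) with the same divisor \(E_i\) (a nowhere-vanishing ratio on \(Y_U\) is constant on the \(\mathbb{P}^{r-1}\)-fibres, hence descends to a unit on \(U\)). Here the auxiliary divisor \(A=\div(s_1+\dots+s_r)\) does the decisive work: applying \(\psi\) to \(s_1+\dots+s_r\) and comparing divisors through \(\phi_U^*C=A\) gives \(\div(\sum_i u_it_i)=\div(\sum_i t_i)\), so \(\sum_i u_it_i=v\sum_i t_i\) for a single unit \(v\), that is \(\sum_i(u_i-v)t_i=0\). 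Evaluating at the generic point of \(Z\), where \(\dim\Supp\coker(t)<\dim Z\) forces \(t\) to be an isomorphism and hence \(t_1,\dots,t_r\) to be linearly independent over \(\mathbf{k}(Z)\), I conclude \(u_i=v\) for all \(i\). Rescaling \(\psi\) by \(v^{-1}\) then gives \(\psi(s_i)=t_i\) for every \(i\), i.e.\ \(\psi\circ s=t\) over \(U\); Step one extends this to \(Z\), proving \([\mathcal{E},s]=[\mathcal{G},t]\).

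I expect the main obstacle to be precisely this third step: everything before it only determines \(\psi\) up to a full tuple of independent units \(u_i\), and without the datum \(A\) one could conclude no more than \(\psi(s_i)=u_it_i\), for which the pairs need not be equivalent. The role of recording \(A=\div(s_1+\dots+s_r)\) in the associated model is exactly to collapse this ambiguity to a single scalar, and the two technical points I would be careful to justify are the generic linear independence of the \(t_i\) and the claim that two sections of \(\mathcal{O}_{Y_U}(1)\) with equal divisor differ by a unit pulled back from \(U\).
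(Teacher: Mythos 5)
Your proof is correct and follows the same route as the paper: the paper's own argument consists of exactly your first step (extend an isomorphism of the restricted pairs over the big open set $U$ to all of $Z$ using that locally free sheaves on a normal variety are reflexive), with the preliminary assertion that the model isomorphism over $U$ yields an isomorphism of the restricted pairs stated without justification. Your steps two and three — recovering $\psi$ from $\phi_U^*\mathcal{O}_{Y_U}(1)\cong\mathcal{O}_{X_U}(1)$ and then using the divisor $A$ together with generic linear independence of the $t_i$ to collapse the units $u_i$ to a single rescaling — correctly supply precisely the content the paper leaves implicit, and your identification of the role of $A$ as the crux is accurate.
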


\begin{proof}
	By assumption, the restricted sheaf stable pairs \(\mathcal{E}|_U,s|_U\) and \(\mathcal{G}|_U,t|_U\) are isomorphic.
	Since both \(\mathcal{E}\) and \(\mathcal{G}\) are locally free, hence reflexive, and \(Z\) is normal,
	this isomorphism extends uniquely to all of \(Z\).
\end{proof}

Let \(Z\) be a normal algebraic variety.
Let \(\mathcal{E},s\) be a sheaf stable pair of rank \(2\) on \(Z\) with \(\mathcal{E}\) locally free,
and let
\begin{equation}\label{eq:associated_model}
	X,D_1,D_2,A \xlongrightarrow{f} Z
\end{equation}
be the associated model.
Assume that the cokernel scheme \(Q\) is connected.
In particular, the singular set of \(Q_{\red}\) is of codimension two in \(Z\).

\begin{remark}\label{rmk:vertical_image}
	Assume that \(D_1=\div(s_1)\) and \(D_2=\div(s_2)\) have no common (vertical) components.
	Then for any \(s\in\Span\{s_1,s_2\}\), if \(\div(s)\) has a vertical component \(V\),
	the image of \(V\) under \(f\) is contained in \(Q\).
	Otherwise, \(V\) will meet the horizontal part of \(D_1\), contradicting \zcref{prop:image_of_intersection}.
\end{remark}

\begin{lemma}\label{lem:pure_horizontal}
	Assume \(D_1=\div(s_1)\) and \(D_2=\div(s_2)\) have no common (vertical) components.
	Then we can find a section
	\(s_h\in\Span\{s_1,s_2\}\) such that \(A_h=\div(s_h)\) is purely horizontal.
\end{lemma}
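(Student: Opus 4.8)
The plan is to reduce the statement to a genericity argument on the pencil $\operatorname{Span}\{s_1,s_2\}\cong\mathbb{A}^2$ and to pin down the finitely many members that acquire a vertical component. First I would translate the vertical-component condition into a condition on $s$ as a section of $\mathcal{E}$. Since $f\colon X=\mathbb{P}(\mathcal{E})\to Z$ is a $\mathbb{P}^1$-bundle with $f_*\mathcal{O}_X(1)=\mathcal{E}$, a section $s\in\operatorname{Span}\{s_1,s_2\}\subseteq H^0(Z,\mathcal{E})=H^0(X,\mathcal{O}_X(1))$ restricts on each fibre $f^{-1}(z)$ to a section of $\mathcal{O}_{\mathbb{P}^1}(1)$, and this restriction vanishes on the whole fibre precisely when the value $s(z)\in\mathcal{E}_z$ is zero. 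Hence $\div(s)$ has a vertical component $f^{-1}(W)$ over a prime divisor $W\subseteq Z$ if and only if $s$, viewed as a section of $\mathcal{E}$, vanishes along $W$, i.e.\ has positive order at the generic point $\eta_W$. The goal thus becomes: find $(\lambda:\mu)$ so that $s_h=\lambda s_1+\mu s_2$ has no divisorial zero as a section of $\mathcal{E}$.

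Next I would analyse one prime divisor at a time. Fix $W$, let $R=\mathcal{O}_{Z,\eta_W}$ be the associated discrete valuation ring with uniformiser $t$ and residue field $\kappa(W)$, and trivialise $\mathcal{E}\cong R^{\oplus 2}$, writing $s_i=(a_i,b_i)$ with $a_i,b_i\in R$. Then $s_h$ vanishes along $W$ exactly when both coordinates $\lambda a_1+\mu a_2$ and $\lambda b_1+\mu b_2$ are divisible by $t$, that is, when $(\lambda,\mu)$ lies in the kernel over $\kappa(W)$ of the reduced matrix $M_W=\left(\begin{smallmatrix}\bar a_1 & \bar a_2\\ \bar b_1 & \bar b_2\end{smallmatrix}\right)$. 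So the \emph{bad} parameters for $W$ form the projectivised kernel of $M_W$ inside $\mathbb{P}^1$.

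The core of the argument is a rank count for $M_W$. If $\operatorname{rk}M_W=0$, all four entries vanish modulo $t$, which means both $s_1$ and $s_2$ vanish along $W$; this is a common vertical component of $D_1$ and $D_2$, excluded by hypothesis. Hence $\operatorname{rk}M_W\geq 1$ for every $W$, so the bad locus of each $W$ is at most a single point of $\mathbb{P}^1$. Moreover $\operatorname{rk}M_W\leq 1$ forces $\det M_W=0$, i.e.\ $t\mid(a_1b_2-a_2b_1)$, which says that $W$ is a component of $\div(s_1\wedge s_2)$, the degeneracy divisor of the pencil. Here $s_1\wedge s_2\in H^0(Z,\det\mathcal{E})$ is nonzero: by the definition of a sheaf stable pair the map $s$ is generically an isomorphism, so $s_1,s_2$ form a basis of $\mathcal{E}$ at the generic point (this degeneracy locus is the Fitting support, equal to $\ch_1(\mathcal{E})$, of \zcref{exa:fitting_support_determinant}). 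Consequently only the finitely many components $W_1,\dots,W_k$ of this fixed divisor can be bad.

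Putting this together, each $W_j$ contributes at most one bad point $p_j\in\mathbb{P}^1$ and every other prime divisor contributes none, so the total bad locus is the finite set $\{p_1,\dots,p_k\}$. Since the ground field is algebraically closed, hence infinite, I can choose $(\lambda:\mu)\in\mathbb{P}^1\setminus\{p_1,\dots,p_k\}$; for this choice $s_h$ has no divisorial zero, so $A_h=\div(s_h)$ is purely horizontal, as required. The main obstacle is precisely the finiteness in the previous step: one must be sure that rank drop can occur only along a fixed divisor, which is exactly the nonvanishing of the determinant section $s_1\wedge s_2$ guaranteed by generic injectivity of $s$, while the no-common-component hypothesis is what upgrades ``$\operatorname{rk}\leq 1$'' to ``$\operatorname{rk}=1$'' and thereby prevents any single $W$ from making the entire pencil bad.
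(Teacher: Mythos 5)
Your proof is correct, and it reaches the conclusion by a more direct route than the paper. The paper fixes the explicit sequence of pencil members $t_j=s_1+js_2$ and runs an iteration: using the fact that vertical components of any $\div(t_j)$ must map into the cokernel subscheme $Q$ (its Remark on vertical images, which is your observation that a rank drop of $M_W$ forces $W\subseteq\div(s_1\wedge s_2)$) together with the fact that $t_j$ cannot share a vertical component with any earlier member of the list (your observation that $\operatorname{rk}M_W\geq 1$, so each $W$ kills at most one point of the pencil), it shows that the number of components of $Q$ still ``available'' to absorb vertical components strictly decreases, so the process terminates. You instead set up the parameter $\mathbb{P}^1$ of the pencil once and for all, compute at the generic point of each prime divisor $W$ that the bad locus for $W$ is the projectivised kernel of the $2\times 2$ reduction $M_W$, and conclude that the total bad locus is a finite set of points (at most one per component of the nonzero degeneracy divisor $\div(s_1\wedge s_2)$), so a general member works. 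The two arguments rest on exactly the same two pillars --- the no-common-vertical-component hypothesis rules out $\operatorname{rk}M_W=0$, and generic surjectivity of $s$ confines the bad divisors to the finitely many components of the degeneracy locus --- but your packaging avoids the bookkeeping of the decreasing counts $m_1>m_2>\cdots$ and makes transparent where the infinitude of the ground field enters (the paper uses it implicitly, in that the members $s_1+js_2$ are pairwise distinct in characteristic zero). One cosmetic remark: the bad locus for $W$ is really the projectivisation of $\ker M_W\cap\mathbf{k}^2$ rather than of the full $\kappa(W)$-kernel, but since a one-dimensional $\kappa(W)$-subspace meets $\mathbf{k}^2$ in at most a line, this does not affect the count.
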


\begin{proof}
	If either \(\div(s_1)\) or \(\div(s_2)\) is purely horizontal, we are done.
	Assume both \(D_1\) and \(D_2\) have vertical components.
	Consider first the section \(t_1=s_1+s_2\).
	If \(T_1\coloneqq\div(t_1)\) has no vertical components, set \(s_h=t_1\).
	Otherwise \(T_1\) has vertical components.
	Since \(D_1\) and \(D_2\) have no common vertical components,
	neither does \(T_1\) with \(D_i\) for \(i=1,2\).

	Let \(Q_{11},\dots,Q_{1m_1}\) be those components of \(Q\) over which no vertical components
	of \(D_1\) or \(D_2\) maps via \(f\).
	By \zcref{rmk:vertical_image}, every vertical component of \(T_1\) maps into \(\bigcup_{j=1}^{m_1}Q_{1j}\).
	Next, let \(Q_{21},\dots,Q_{2m_2}\) be the components of \(Q\) over which no vertical components
	of \(D_1\), \(D_2\) or \(T_1\) maps to.
	Note that \(\{Q_{21},\dots,Q_{2m_2}\}\) is a proper subset of \(\{Q_{11},\dots,Q_{1m_1}\}\).
	Hence \(m_2<m_1\).

	Now take \(t_2=s_1+2s_2\) and set \(T_2\coloneqq\div(t_2)\).
	If \(T_2\) has no vertical components, set \(s_h=t_2\).
	Otherwise, as above, \(T_2\) has no common vertical components with \(D_1\), \(D_2\), or \(T_1\).
	By \zcref{rmk:vertical_image}, the vertical components of \(T_2\) map into \(\bigcup_{j=1}^{m_2}Q_{2j}\).

	Define \(Q_{31},\dots,Q_{3m_3}\) to be the components of \(Q\) over which no vertical component
	of \(D_1\), \(D_2\), \(T_1\), \(T_2\) maps.
	Then \(m_3<m_2\).

	Iterating this with \(t_j=s_1+js_2\),
	we obtain a strictly decreasing sequence of integers \(m_1>m_2>\dots\geq 0\).

	Either at some stage \(T_j\) has no vertical components,
	in which case we set \(s_h=t_j\), or the process terminates when \(m_k=0\).
	In the latter case, every component of \(Q\) is the image of a vertical component of one of
	\(D_1\), \(D_2\), \(T_1,\dots,T_{k-1}\).
	Then \(t_k=s_1+ks_2\) has no vertical components by \zcref{rmk:vertical_image}, so we set \(s_h=t_k\).
\end{proof}

In the following we assume that \(Z\) is a normal projective surface.
Take the minimal log resolution \(\pi\colon\widetilde{Z}\to Z\) of \((Z,Q)\)
\cite[Proposition~1.13.7]{kawamata2024algebraic}.
Write
\[
	\widetilde{Q}\coloneqq\pi^{*}Q=Q^{\sim}+C
\]
where \(Q^{\sim}\) is the strict transform of \(Q\) and \(C\) is \(\pi\)-exceptional.
Let
\[
	\widetilde{X},\widetilde{D}_1, \widetilde{D}_2,\widetilde{A} \xlongrightarrow{f} \widetilde{Z}
\]
be the base change of \eqref{eq:associated_model};
this is the associated model of the sheaf stable pair \((\pi^{*}\mathcal{E},\pi^{*}s)\) on \(\widetilde{Z}\).
We denote the induced morphism \(\widetilde{X}\to X\) by \(\pi_{X}\),
and write \(\widetilde{\mathcal{E}}\coloneqq\pi^{*}\mathcal{E}\).

In what follows, we always assume that \(D_1\) and \(D_2\) have no common (vertical) component
(this holds, for instance, if \(Q\) is reduced).
By \zcref{lem:pure_horizontal}, there exists some section \(s_h\in\Span\{s_1,s_2\}\)
such that \(A_h=\div(s_h)\) is purely horizontal.
Then \(f|_{A_h}\colon A_h\to Z\) is an isomorphism, and thus
\[
	T\coloneqq D_1\cap D_2=A_h\cap D_i
\]
is mapped isomorphically to \(Q\) for \(i=1,2\).
Moreover, \(\widetilde{A}_h\) contains no \(\pi\)-exceptional divisor,
so \(\widetilde{A}_h\) is purely horizontal as well, and
\[
	\widetilde{T}=\widetilde{D}_1\cap\widetilde{D}_2=\widetilde{A}_h\cap\widetilde{D}_i
\]
is mapped isomorphically to \(\widetilde{Q}\) for \(i=1,2\).

\subsection*{Forward transform via elementary transformations}

We now perform a sequence of Maruyama's elementary transformations \cite[Theorem~1.4]{maruyama1982elementary},
to construct intermediate models
\[
	X_i, D_{1,i}, D_{2,i}, A_i \xlongrightarrow{f_i} \widetilde{Z},
\]
eventually arriving at a stable minimal model
\[
	X', D_1', D_2', A' \xlongrightarrow{f'} Z'.
\]

For clarity, we first illustrate this when the Fitting support of \(\widetilde{\mathcal{Q}}\) is \(Q=H_1+H_2\).
Here either \(Q\) has two distinct irreducible components \(H_1\) and \(H_2\),
or \(Q=2H_1\) with \(H_1=H_2\).
In this situation we have the commutative diagram
\begin{equation}\label{eq:maruyama_transform_h1_h2}
	\begin{tikzcd}
		& & 0 \ar[d] & 0 \ar[d] \\
		0 \ar[r] & \mathcal{O}_{\widetilde{Z}}^2 \ar[r,"s_1"] \ar[d,"\cong"] & \widetilde{\mathcal{E}}'
		\ar[r,"\delta_1"] \ar[d] & \widetilde{\mathcal{Q}}_{H_2}(-H_1)
		\ar[r] \ar[d] & 0 \\
		0 \ar[r] & \mathcal{O}_{\widetilde{Z}}^2 \ar[r] & \widetilde{\mathcal{E}} \ar[r] \ar[d,"\delta"] &
		\widetilde{\mathcal{Q}} \ar[r] \ar[d] & 0 \\
		& & \widetilde{\mathcal{Q}}_{H_1}\ar[r,"\cong"] \ar[d] & \widetilde{\mathcal{Q}}_{H_1} \ar[d] \\
		& & 0 & 0
	\end{tikzcd},
\end{equation}
where \(\widetilde{\mathcal{E}}'=\ker(\delta)\) and the right column is obtained by tensoring
\(\widetilde{\mathcal{Q}}\) with the short exact sequence
\[
	0\to \mathcal{O}_{H_2}(-H_1)\to \mathcal{O} \to \mathcal{O}_{H_1}\to 0
\]
for two prime divisors \(H_1\) and \(H_2\) on the smooth projective variety \(Z\).
Now we consider the middle column and run the Maruyama's elementary transformation for
\(X_0\coloneqq\mathbb{P}(\widetilde{\mathcal{E}})\) along \(Y_0\coloneqq\mathbb{P}(\widetilde{\mathcal{Q}}_{H_1})\)
to get \(X_1\coloneqq\mathbb{P}(\widetilde{\mathcal{E}}')\) (\cite[Theorem~1.4]{maruyama1982elementary}).
That is to say, we blow up \(X_0\) along \(Y_0\) and then contract the
proper transformation of \((X_0)_{H_1}\),
which is the restriction of the projective bundle \(\mathbb{P}(\widetilde{\mathcal{E}})\) to \(H_1\);
see the following diagram:
\[
	\begin{tikzcd}
		{} & \Bl_{Y_0}X_0 \ar[ld,start anchor=south west] \ar[rd,start anchor=south east] \\
		X_0=\mathbb{P}(\widetilde{\mathcal{E}}) \ar[d,"f"'] & & X_{1}\simeq\mathbb{P}(\widetilde{\mathcal{E}}')
		\ar[d,"f_{1}"] \\
		\widetilde{Z} & & \widetilde{Z}.
	\end{tikzcd}
\]
Let \(D_{1,1}\) and \(D_{2,1}\) be the strict transforms of \(\widetilde{D}_1\) and \(\widetilde{D}_2\).
The pair \(\widetilde{\mathcal{E}}',s_1\) is a sheaf stable pair with cokernel \(\widetilde{\mathcal{Q}}_{H_2}(-H_1)\),
which we denote by \(\mathcal{Q}_1\).
Then by \zcref{prop:image_of_intersection},
we know \(f_1\) maps \(T_1\coloneqq D_{1,1}\cap D_{2,1}\) isomorphically onto the \(Q_1\coloneqq H_2\)
which is the fitting support of \(\mathcal{Q}_1\).
Now \(Q_1=H_2\leq Q=H_1+H_2\) and
we see that the multiplicity of \(H_1\) decreases by one after the Maruyama's elementary transformation.

Then we consider the top row in commutative diagram \eqref{eq:maruyama_transform_h1_h2} and run the
Maruyama's elementary transformation for \(X_1\) along \(Y_1\coloneqq\mathbb{P}(\mathcal{Q}_1)\) to get
\[
	X_2\coloneqq\mathbb{P}(\ker(\delta_1))=\mathbb{P}(\mathcal{O}_{\widetilde{Z}}^2)\simeq
	\widetilde{Z}\times\mathbb{P}^1.
\]
Let \(D'_1\) and \(D'_2\) be the strict transforms of \(D_{1,1}\) and \(D_{2,1}\).
Now we consider the sheaf stable pair \(\mathcal{O}_{\widetilde{Z}}^2,\id\) corresponding to
the identity morphism \(\mathcal{O}_{\widetilde{Z}}^2\to\mathcal{O}_{\widetilde{Z}}^2\) with cokernel zero.
Hence \(T_2\coloneqq D'_{1}\cap D'_{2}=\emptyset\).
This is the situation referred to as Step 3 in the forward construction below.

\medskip

We now describe the general forward procedure.

\begin{equation}\label{eq:geometric_interpretation}
	\begin{tikzcd}
		\widetilde{X} \ar[r,dashed] \ar[d,"\widetilde{f}"']\ar[rd,"\pi_X"] & \cdots \ar[r,dashed]
		& \cdots \ar[r,dashed] & \widetilde{X}'\ar[d,"\widetilde{f}'"]\ar[ld,"\theta_{X'}"'] \\
		\widetilde{Z} \ar[rd,"\pi"'] & X \ar[d,"f"'] & X'\ar[d,"f'"]\ar[l] & \widetilde{Z}' \ar[ld,"\theta"] \\
		& Z & \ar[l,"\pi'"] Z'
	\end{tikzcd}
\end{equation}

\noindent
\textbf{Forward Transform.}
Let \(X_0,D_{1,0},D_{2,0},A_0\) denote the model \(\widetilde{X},\widetilde{D}_1,\widetilde{D}_2,\widetilde{A}\).
Write
\[
	Q_0=\widetilde{Q}=\sum_{j=1}^{l}n_{j,0}H_j,
\]
where \(H_j\) are distinct irreducible components of \(Q_0\).

\medskip

Steps 0-2 below describe the process expressed by the dashed arrows from \(\widetilde{X}\)
to \(\widetilde{X}'\) over \(\widetilde{Z}\) in the diagram \eqref{eq:geometric_interpretation}.
Step 3 describes the contractions \(\theta_{X'}\) and \(\theta\) in the diagram \eqref{eq:geometric_interpretation}.

\medskip

\noindent
\textit{Step 0 (Initial Data).}
Assume we have constructed
\[
	X_i, D_{1,i}, D_{2,i}, A_i \xlongrightarrow{f_i} \widetilde{Z}
\]
with the following properties:
\begin{itemize}[leftmargin=*]
	\item \(f_i\colon X_i\to \widetilde{Z}\) is a \(\mathbb{P}^1\)-bundle;
	\item \(T_i\coloneqq D_{1,i}\cap D_{2,i}\) maps isomorphically onto a subscheme
		\(Q_i\subseteq\widetilde{Q}\) under \(f_i\);
	\item writing \(Q_i=\sum_{j=1}^{l}n_{j,i}H_j\), the total multiplicity \(N_i=\sum_{j=1}^{l}n_{j,i}\geq 0\).
\end{itemize}

\medskip

\noindent
\textit{Step 1 (Elementary Transformation).}
If \(T_i=\emptyset\), then we proceed directly to Step 3.

If \(T_i\neq\emptyset\), pick a component \(S\subseteq Q_i\).
Consider the restriction \((X_i)_S\to S\) of the \(\mathbb{P}^1\)-bundle \(f_i\) to \(S\).
By construction and \zcref{prop:image_of_intersection},
\((X_i)_S\) contains a component \(S'\) of \(T_i\) with multiplicity \(\mu_{Q_i}S\).
We perform Maruyama's elementary transformation:
blow up \(X_i\) along \(S'\), then contract the strict transform of \((X_i)_S\).
The resulting contraction is the blowup of \(X_{i+1}\) along a curve \(S''\).
By \cite[Theorem~1.4]{maruyama1982elementary},
the new morphism \(f_{i+1}\colon X_{i+1}\to \widetilde{Z}\) is again a \(\mathbb{P}^1\)-bundle,
fitting into
\[
	\begin{tikzcd}
		{} & \Bl_{S'}X_i = \Bl_{S''}X_{i+1} \ar[ld,start anchor=south west] \ar[rd,start anchor=south east] \\
		X_i \ar[d,"f_i"'] & & X_{i+1} \ar[d,"f_{i+1}"] \\
		\widetilde{Z} \ar[rr,equal] & & \widetilde{Z}
	\end{tikzcd}
\]

\noindent
\textit{Step 2 (Subsequent Transformations).}
Let \(D_{1,i+1}\), \(D_{2,i+1}\) and \(A_{i+1}\) be the strict transforms of \(D_{1,i}\), \(D_{2,i}\) and \(A_i\).
Set \(T_{i+1}\coloneqq D_{1,i+1}\cap D_{2,i+1}\).
Then the total multiplicity decreased by one: \(N_{i+1}=N_i-1\).

If \(S\subseteq Q_i\) is a reduced component, i.e., \(S\) has multiplicity one in \(Q_i\),
then \(T_{i+1}\) has no component contained in \((X_{i+1})_S\).
In this case, return to Step~0 with
\[
	X_{i+1}, D_{1,i+1}, D_{2,i+1}, A_{i+1} \xlongrightarrow{f_{i+1}} \widetilde{Z}.
\]

If \(S\subseteq Q_i\) has multiplicity \(>1\),
then \(T_{i+1}\) contains a component \(S''\subseteq (X_{i+1})_S\).
In this case, return to Step~1, again with the same component \(S\).

\noindent
\textit{Step 3 (Final Result).}
The process terminates after finitely many steps, since \(N_i\) strictly decreases.
Set
\[
	\begin{cases}
		(\widetilde{X}',\widetilde{D}_1',\widetilde{D}_2',\widetilde{A}') = (X_i,D_{1,i},D_{2,i},A_i), \\
		\widetilde{f}'=f_i, \\
		\widetilde{B}'=\widetilde{D}_1'+\widetilde{D}_2'+\widetilde{f}'^{*}\Supp \widetilde{Q}.
	\end{cases}
\]
Since \(\widetilde{D}_1'\sim \widetilde{D}_2'\) and \(\widetilde{D}_1'\cap \widetilde{D}_2'=\emptyset\),
the linear system \(\abs{\widetilde{D}_1'}=\abs{\widetilde{D}_2'}\) defines an isomorphism
\(\widetilde{X}'\simeq\widetilde{Z}\times\mathbb{P}^1\).

We then apply the construction of \zcref{thm:existence_stable_minimal_model} to
produce a stable minimal model \((X',B'), A' \longrightarrow Z'\).
Here \(Z'\) is obtained as the relative lc model of \((\widetilde{Z},\Supp\widetilde{Q})\) over \(Z,\Supp Q\);
and \(B'=D_1'+D'_2+f'^{*}\Supp Q'\) is constructed from the model \(X',D'_{1}, D'_{2},A'\xlongrightarrow{f'} Z'\)
associated to the identity morphism \(\mathcal{O}_{Z'}^2\to\mathcal{O}_{Z'}^2\).

\medskip

To recover \(X,D_1,D_2,A\to Z\) from the associated stable minimal model \(X',B',A'\to Z'\),
we perform a sequence of Maruyama's elementary transformations in the reverse direction.

\medskip
\noindent
\textbf{Backward Transform.}
Take the minimal log resolution \(\theta\colon\widetilde{Z}\to Z'\) of \((Z',Q')\)
(\cite[Proposition~1.13.7]{kawamata2024algebraic}).
The induced morphism \(\pi\colon\widetilde{Z}\to Z\) is also the minimal log resolution of \((Z,Q)\).
Write
\[
	\widetilde{Q}\coloneqq\theta^{*}Q'=Q'^{\sim}+C'
\]
where \(Q'^{\sim}\) is the strict transform of \(Q'\) and \(C'\) is \(\theta\)-exceptional.
Let
\[
	\widetilde{X}',\widetilde{D}'_1,\widetilde{D}'_2,\widetilde{A}' \xlongrightarrow{\widetilde{f}'} \widetilde{Z}
\]
be the base change of \(X',D_1',D_2',A'\to Z'\),
which is the associated model of the sheaf stable pair \((\mathcal{O}_{\widetilde{Z}}^2,\id)\) on \(\widetilde{Z}\).
Denote the induced map \(\widetilde{X}'\to X'\) by \(\theta_{X'}\).

Let \(X^0,D^0_1,D^0_2,A^0\) represent the model \(\widetilde{X}',\widetilde{D}'_1,\widetilde{D}'_2,\widetilde{A}' \).
Pick a fibre \(X^0_p\) of \(X^0\to\mathbb{P}^1\) over some point \(p\in\mathbb{P}^1\);
then \(X^0_p\simeq \widetilde{Z}\).

Steps 0-2 below describe the process from \(\widetilde{X}'\) to \(\widetilde{X}\) over \(\widetilde{Z}\),
inverse to the sequence of dashed arrows in \eqref{eq:geometric_interpretation}.
Step 3 accounts for the contractions \(\pi_{X}\) and \(\pi\).

\medskip

\noindent
\textit{Step 0 (Initial Data).}
Assume we have constructed
\[
	X^i, D^i_1, D^i_2, A^i \xlongrightarrow{f^i} \widetilde{Z}
\]
with:
\begin{itemize}[leftmargin=*]
	\item \(f^i\colon X^i\to \widetilde{Z}\) is a \(\mathbb{P}^1\)-bundle;
	\item \(X^i_p\subseteq X^i\) is the proper transform of \(X^0_p\);
	\item \(T^i\coloneqq D^i_1\cap D^i_2\) maps isomorphically to a subscheme \(Q^{i}\subseteq\widetilde{Q}\)
		under \(f_i\).
\end{itemize}

\noindent
\textit{Step 1 (Elementary Transformation).}
If \(Q^i=\widetilde{Q}\), proceed directly to Step 3.

If \(Q^i\neq\widetilde{Q}\), choose a component \(S\subseteq\widetilde{Q}-Q^i\) (as divisors).
By construction, \(X^i_S\) meets \(X_p^i\) at some curve \(S'\),
where \(X^i_S\to S\) is the restriction of the \(\mathbb{P}^1\)-bundle \(f_i\) to \(S\).
Perform Maruyama's elementary transformation:
blowup \(X^i\) along \(S'\) and then contract the strict transform of \(X^i_S\).
The resulting morphism \(f^{i+1}\colon X^{i+1}\to\widetilde{Z}\) is again a \(\mathbb{P}^1\)-bundle,
fitting into
\[
	\begin{tikzcd}
		{} & \Bl_{S'}X^i = \Bl_{S''}X^{i+1} \ar[ld,start anchor=south west] \ar[rd,start anchor=south east] \\
		X^i \ar[d,"f^i"'] & & X^{i+1} \ar[d,"f^{i+1}"] \\
		\widetilde{Z} \ar[rr,equal] & & \widetilde{Z}
	\end{tikzcd}
\]

\noindent
\textit{Step 2 (Subsequent Transformations).}
Let \(D_1^{i+1}\), \(D_2^{i+1}\) and \(A^{i+1}\) be the strict transforms of \(D_1^i\), \(D_2^i\) and \(A^i\),
and set \(T^{i+1}\coloneqq D_1^{i+1}\cap D_2^{i+1}\).

Suppose \(S\subseteq\widetilde{Q}-Q^i\) has multiplicity \(\geq 1\).
Then \(T^{i+1}\) has a component \(S''\subseteq X^{i+1}_S\) with multiplicity \(\leq\mu_{\widetilde{Q}}S-1\).
Here $S''$ is the strict transform of $S'$ after Step~1.
Next we perform Maruyama's elementary transformation along a section of \(X^{i+1}_S\to S\)
disjoint with \(S''\) but passing through the points of
\[
	X^i_p\cap (f^{i+1})^{-1}((\widetilde{Q}_{\red}-(Q^i+S)_{\red})\cap S).
\]
By \zcref{lem:parameter_space_cross_sections}, such sections form a family
(typically parameterised by an affine space).

\begin{remark}\label{rmk:parametrize_back}
	Assume that \(S\) has self-intersection \(b\) in \(\widetilde{Z}'\) and \(S'\) has self-intersection \(a\)
	in \(X_S^i\).
	After blowing up \(X^i_S\) along \(S'\),
	let \(E\) be the exceptional divisor and \(C\) the intersection of \(E\) with the strict transform of \(X^i_S\).
	By \zcref{prop:self_intersection_blowup_curve}, we have \((C)^2_E=b-a\).
	Hence \(E\) is a ruled surface of with invariant \(|a-b|\), by \zcref{cor:ruled_surface_inv}.
	The curve \(S''\) lies on \(E\) and is disjoint with \(C\).

	Suppose further that \(S\) is a smooth rational curves.
	If \(b-a>0\), then \(S''\) is the unique negative section of \(E\to S\);
	if \(b-a<0\), the possible choices of \(S''\) is parametrized by \(\mathbb{A}^{\abs{a-b}}\)
	(\zcref{lem:parameter_space_cross_sections}).
\end{remark}

After finite many such steps, \(S\) ceases to be a component of \(\widetilde{Q}-Q^i\),
and \(T^{i+1}\) acquires a component on \(X^{i+1}_S\) with multiplicity \(\mu_{\widetilde{Q}} S\).
Then we return to Step~0 with
\[
	X^{i+1}, D^{i+1}_1, D^{i+1}_2, A^{i+1} \xlongrightarrow{f^{i+1}} \widetilde{Z}
\]
and continue.

\noindent
\textit{Step 3 (Final Outcome).}
The procedure terminates after finitely many steps.
Set
\[
	\begin{cases}
		(\widetilde{X},\widetilde{D}_1,\widetilde{D}_2,\widetilde{A}) = (X^i,D_1^i,D_2^i,A^i), \\
		\widetilde{f}=f^i.
	\end{cases}
\]
Then \(\widetilde{X},\widetilde{D}_1,\widetilde{D}_2,\widetilde{A}\xrightarrow{\widetilde{f}}\widetilde{Z}\)
is the model associated to some sheaf stable pair \(\widetilde{\mathcal{E}},\widetilde{s}\) on \(\widetilde{Z}\).
We know \(\widetilde{X}_{C}\simeq C\times\mathbb{P}^1\)
for each \(\pi\)-exceptional curve \(C\) by \zcref{lem:p1_bundle_over_exc_curve}.
Since \(\pi\colon\widetilde{Z}\to Z\) is obtained by a sequence of blowups,
one may contract \(\widetilde{X}\) and \(\widetilde{Z}\) in a controlled way to get \(X\) and \(Z\) respectively.
Let \(D_1,D_2\) and \(A\) be the strict transforms of \(\widetilde{D}_1,\widetilde{D}_2\) and \(A\) respectively.

\medskip

The class \([\mathcal{E},s]\) is uniquely determined by the associated model \(X,D_1,D_2,A\)
which is in turn determined by the fixed model \(X',D_1',D_2',A'\to \widetilde{Z}\),
a fibre of \(X'\to \mathbb{P}^1\) and choices of cross-sections as in Step~2 of the backward transform.

\begin{lemma}\label{lem:p1_bundle_over_exc_curve}
	For each \(\pi\)-exceptional curve \(C\subseteq\widetilde{Q}\), we have \(\widetilde{X}_C\simeq C\times\mathbb{P}^1\).
\end{lemma}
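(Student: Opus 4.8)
The plan is to reduce the statement to a computation of the Hirzebruch invariant of \(\widetilde{X}_C\), and then to evaluate that invariant by following the effect of the backward transform on \(\widetilde{X}_C\) one elementary transformation at a time. Since \(\pi\) is the minimal log resolution, each \(\pi\)-exceptional curve \(C\subseteq\widetilde{Q}\) is a smooth rational curve, so \(\widetilde{X}_C=\widetilde{f}^{-1}(C)\to C\) is a \(\mathbb{P}^1\)-bundle over \(\mathbb{P}^1\), hence a Hirzebruch surface \(\mathbb{F}_n\); the claim is precisely that \(n=0\), equivalently that \(\widetilde{X}_C\) carries an irreducible cross-section of self-intersection \(0\). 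First I would fix the bookkeeping: write \(X^i_C\) for the restriction to \(C\) of the \(i\)-th intermediate bundle \(X^i\to\widetilde{Z}\), starting from \(X^0_C=C\times\mathbb{P}^1\) (which is \(\mathbb{F}_0\), as \(\widetilde{X}'=\widetilde{Z}\times\mathbb{P}^1\)), and after each step record the self-intersection of the distinguished section \(X^i_p\cap X^i_C\) and of the section \(T^i\cap X^i_C\).

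There are two ways a transformation alters \(X^i_C\). For a transformation performed over \(S=C\), \zcref{prop:self_intersection_blowup_curve} and \zcref{rmk:parametrize_back} show that blowing up a cross-section \(\rho\subseteq X^i_C\) of self-intersection \(a\) and contracting the strict transform of \(X^i_C\) replaces \(X^i_C\) by a ruled surface whose distinguished section has self-intersection \(C^2-a\). Beginning from \(\mathbb{F}_0\) the first such step (with \(a=0\)) produces \(\mathbb{F}_{\lvert C^2\rvert}\), with negative section \(\gamma=X^i_p\cap X^i_C\) and positive section \(S''\subseteq T^i\); since the unique irreducible cross-section disjoint from \(S''\) is \(\gamma\) itself (self-intersection \(C^2\)), the Step-2 prescription of the backward transform forces the next centre to be \(\gamma\), and the same formula returns the surface to \(\mathbb{F}_0\). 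Thus the transformations performed over \(C\) alone alternate \(\mathbb{F}_0,\mathbb{F}_{\lvert C^2\rvert},\mathbb{F}_0,\dots\), ending at \(\mathbb{F}_0\) exactly when their number \(\mu_{\widetilde{Q}}C\) is even. For a transformation over a neighbouring component \(S\neq C\) with \(C\cap S\neq\emptyset\), the effect on \(\widetilde{X}_C\) is a single elementary modification in the fibre over each point of \(C\cap S\), each changing the invariant by \(\pm1\); by \zcref{lem:parameter_space_cross_sections} the centre chosen in Step 2 is pinned down by the requirement to pass through exactly the prescribed points \(X^i_p\cap(f^{i+1})^{-1}((\widetilde{Q}_{\red}-(Q^i+S)_{\red})\cap S)\), which lie over \(C\cap S\). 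I would then prove by induction on the number of fully processed components that, once the block attached to each component is complete, the distinguished section \(X^i_p\cap X^i_C\) again has self-intersection \(0\); \zcref{cor:ruled_surface_inv} then identifies the final surface as \(\mathbb{F}_0=C\times\mathbb{P}^1\).

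The hard part will be this coupled bookkeeping. In isolation a transformation over \(C\) can raise the invariant to \(\lvert C^2\rvert\) and a transformation over a neighbour \(S\) performs an a priori uncontrolled modification in the fibres over \(C\cap S\); the real content of the lemma is that the prescribed cross-section choices synchronise the two families of modifications so that the net invariant vanishes. Turning this into a proof means showing that the distinguished section returns to self-intersection \(0\) \emph{after} each component has been fully processed, and not merely tracking self-intersections step by step — this is where the dichotomy in \zcref{rmk:parametrize_back} (a forced negative section when \(b-a>0\), versus an \(\mathbb{A}^{\lvert a-b\rvert}\)-family of admissible centres when \(b-a<0\)) is used in an essential way. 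As a numerical sanity check I would record that for a \(\theta\)-exceptional \(C\) the projection formula and \zcref{cor:fitting_support_cartier} give \(\deg(\widetilde{\mathcal{E}}|_C)=\ch_1(\widetilde{\mathcal{E}})\cdot C=\widetilde{Q}\cdot C=\theta^{*}Q'\cdot C=0\), so \(\widetilde{\mathcal{E}}|_C=\mathcal{O}_C(k)\oplus\mathcal{O}_C(-k)\) and \(n=2k\) is forced to be even, in agreement with the pairwise cancellation found above.
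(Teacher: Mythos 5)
Your overall strategy --- track the Hirzebruch invariant of \(X^i_C\) through the backward transform, using \zcref{prop:self_intersection_blowup_curve} for the steps over \(C\) and single-point elementary modifications for the steps over neighbouring components --- is the same as the paper's. But the key computation is wrong: you have swapped the negative and positive sections after the first transformation over \(C\). Writing \(C^2=-b\) with \(b>0\), the first blowup centre \(S'=X^0_C\cap X^0_p\) has \(a=0\) in \(X^0_C\), so by \zcref{prop:self_intersection_blowup_curve} the curve \(S''=E\cap\widetilde{X^0_C}\) (the component of \(T^1\), to which the old \(X^0_C\) is contracted) has self-intersection \(C^2-a=-b\): it is the \emph{negative} section of \(X^1_C\simeq\mathbb{F}_b\), while \(X^1_p\cap X^1_C\) is the positive section (its self-intersection is \(2\,X^0_p\cdot S'-\deg\mathcal{N}_{S'/X^0}=0-(-b)=+b\), since the normal bundle of the fibre \(X^0_p\) is trivial). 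Step~2 of the backward transform requires the next centre to be a section \emph{disjoint from} \(S''\), hence a positive section with \(a=+b\); the formula then gives a new distinguished curve of self-intersection \(C^2-a=-2b\) and a surface \(\mathbb{F}_{2b}\), not \(\mathbb{F}_0\). So the invariant does not alternate \(0,\,\lvert C^2\rvert,\,0,\dots\) as you claim; it increases monotonically \(0,b,2b,\dots,bm_C\), and after all \(m_C\) transformations over \(C\) one has \(X^{m_C}_C\simeq\mathbb{F}_{bm_C}\). Your ``forced next centre \(=\gamma\)'' step contradicts the disjointness requirement in Step~2, and with it the whole pairwise-cancellation/parity picture (and the claim that the distinguished section returns to self-intersection \(0\) after each component is fully processed) collapses.

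Because of this, you also miss the actual mechanism by which the invariant returns to zero. In the paper's proof the entire deficit \(bm_C\) is removed by the transformations over the components adjacent to \(C\): each such transformation lowers the invariant of \(X^i_C\) by exactly one, and the identity
\[
	bm_C=-C\cdot(m_CC)=C\cdot\Bigl(Q'^{\sim}+\sum_{C'\neq C}m_{C'}C'\Bigr),
\]
which follows from \(\widetilde{Q}\cdot C=\theta^{*}Q'\cdot C=0\), shows that the number of such transformations is exactly \(bm_C\). This counting identity is the crux of the lemma and is absent from your proposal; what you flag as ``the hard part'' (synchronising the modifications over \(C\) with those over its neighbours) is resolved precisely by it, not by an induction on fully processed components. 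Your final parity check via \(\deg(\widetilde{\mathcal{E}}|_C)=0\) is correct but only shows the invariant is even (e.g.\ \(\mathcal{O}_C(1)\oplus\mathcal{O}_C(-1)\) has degree \(0\) and invariant \(2\)), so it cannot substitute for the missing argument.
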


\begin{proof}
	Let $m_C=\mu_{\widetilde{Q}}C$.
	In the backward transform \(\widetilde{X}'\rightsquigarrow\widetilde{X}\) over \(\widetilde{Z}\),
	which is inverse to the dashed sequence in \eqref{eq:geometric_interpretation},
	we must perform the elementary transformation affecting \(C\) exactly $m_C$ times so that
	the intersection multiplicity of \(D^i_1\) and \(D^i_2\) on \(X^i_C\) becomes \(m_C\).
	From that point on, by construction, the strict transforms of \(X^i_C\) are all isomorphic
	unless we perform a Maruyama's elementary transformation along a section dominating a component that meets \(C\).

	Thus we may assume that the backward transform starts with the component \(C\).
	Let \(-b\) be the self-intersection of \(C\) in \(\widetilde{Z}\), so that \(b>0\).
	Let \(S^0\subseteq X^0_C\) be the intersection of \(X^0_C\) with \(X^0_p\),
	and let \(S^j\) denote the strict transform of \(S^0\) in \(X^j_C\) after the \(j\)-th elementary
	transformation involving \(C\).
	Write
	\[
		a_j\coloneqq (S^j)^2_{X^j_C}
	\]
	for the self-intersection of $S^j$ in $X^j_C$.
	Initially, $a_0=0$.
	By \zcref{rmk:parametrize_back}, each such transformations decreases this number by \(b\), and hence \(a_j=-bj\).
	In particular, after \(m_C\) such transformations,
	the surface \(X^{m_C}_C\) is isomorphic to the Hirzebruch surface \(\mathbb{F}_{bm_C}\).

	We now consider Maruyama's elementary transformations along sections dominating components that meet \(C\).
	We have
	\[
		\widetilde{Q}=\theta^{*}Q'=Q'^{\sim}+m_CC+\sum_{C'\neq C}m_{C'}C',
	\]
	and therefore
	\[
		bm_C=-C\cdot(m_C C) = C\cdot\left(Q'^{\sim}+\sum_{C'\neq C}m_{C'}C'\right).
	\]
	For each component \(C'\) meeting \(C\), they intersect transversally at a single point.
	Each elementary transformation along a section dominating such a component \(C'\)
	decreases the invariant of the Hirzebruch surface \(X^i_C\) by one.
	After performing all required transformations along components meeting \(C\),
	the invariant of \(X^i_C\) decreases from \(bm_C\) to \(0\).
	Consequently, the resulting surface \(X^i_C\) is isomorphic to \(C\times\mathbb{P}^1\).
\end{proof}

\section{Smooth projective surfaces of Picard number \(1\)}\label{sec:pic_one_surfaces}

In this section, we fix a smooth projective surface \(Z\) of Picard number \(1\)
with an ample generator \(H\) of \(\NS(Z)\).
We consider the Hilbert-Chow morphism \(\sigma\colon M_Z(\ch)\to \CDiv(Z)\)
given by \([\mathcal{E},s]\mapsto\mathcal{Z}_f(\mathcal{Q})\),
where \(\mathcal{Q}\coloneqq \coker(s)\) is the cokernel sheaf.

\begin{remark}
	In general, if \(\mathcal{Q}\) has rank \(1\) along its support,
	then \(\sigma\) sends \([\mathcal{E},s]\) to the (reduced) support of \(\mathcal{Q}\).
	If \(\mathcal{Q}\) has higher rank along some component,
	then \(\sigma\) maps to a non-reduced curve whose multiplicity on that component equals the rank.
\end{remark}

\begin{lemma}
	Let \(C\subseteq Z\) be a curve of degree \(dH^2\), i.e., \(C\cdot H=dH^2\).
	The Chern character of \(\mathcal{O}_{C}\) is \((0,dH,-d^2H^2/2)\).
\end{lemma}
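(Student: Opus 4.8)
The plan is to reduce everything to the class of $C$ in $\NS(Z)$ and then invoke the Friedman lemma recorded above. First I would pin down the numerical class of $C$. Since $Z$ has Picard number one, $\NS(Z)=\mathbb{Z}H$, so $[C]=mH$ for some integer $m$. Pairing with the ample class $H$ and using the hypothesis $C\cdot H=dH^2$ gives $mH^2=dH^2$, whence $m=d$ and $[C]=dH$. This is the only place the Picard-number-one assumption is used.

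Next I would apply the Friedman lemma (the one labelled \cite[p.30, Lemma~1]{friedman1998algebraic}) to the effective Cartier divisor $j\colon C\hookrightarrow Z$ with the trivial line bundle $\mathcal{L}=\mathcal{O}_C$, for which $\ch_1(\mathcal{L})=0$. This immediately yields $\ch_1(j_*\mathcal{O}_C)=[C]=dH$ and $\ch_2(j_*\mathcal{O}_C)=-\tfrac{1}{2}[C]^2=-\tfrac{1}{2}(dH)^2=-\tfrac{d^2H^2}{2}$. Since $\mathcal{O}_C$ is a torsion sheaf supported on the curve $C$, its rank $\ch_0$ vanishes. Assembling the three components gives $\ch(\mathcal{O}_C)=(0,dH,-d^2H^2/2)$, as claimed.

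Alternatively, and more self-contained, I would use the ideal-sheaf sequence $0\to\mathcal{O}_Z(-C)\to\mathcal{O}_Z\to\mathcal{O}_C\to 0$ together with additivity of the Chern character, so that $\ch(\mathcal{O}_C)=\ch(\mathcal{O}_Z)-\ch(\mathcal{O}_Z(-C))=(1,0,0)-e^{-dH}$. On a surface $e^{-dH}$ truncates to $(1,-dH,d^2H^2/2)$, and the subtraction recovers the same triple. Either route is a one-line verification once $[C]=dH$ is in hand.

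There is essentially no serious obstacle here: the computation is routine. The only points demanding minor care are the identification $[C]=dH$ from the degree hypothesis (using Picard number one) and the observation that $\mathcal{O}_C$ is torsion, so that $\ch_0$ is zero; everything else is the bookkeeping of $e^{-dH}$ on a surface.
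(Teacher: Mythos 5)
The paper states this lemma without proof, so there is nothing to compare against line by line; your argument is correct and is evidently the intended one, since the Friedman lemma \(\ch_1(j_*\mathcal{L})=[D]\), \(\ch_2(j_*\mathcal{L})=-\tfrac{1}{2}[D]^2+j_*\ch_1(\mathcal{L})\) is recorded in the preliminaries precisely for this purpose. Both your main route (Friedman's lemma with \(\mathcal{L}=\mathcal{O}_C\), after identifying \([C]=dH\) from \(\NS(Z)=\mathbb{Z}H\) and the degree hypothesis) and your alternative via the ideal-sheaf sequence and \(e^{-dH}\) are complete and correct.
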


\begin{lemma}\label{lem:torsion_free_sheaf_pullback}
	Let \(\mathcal{F}\) be a torsion-free sheaf of rank \(1\) on a curve \(C\), of degree zero,
	and suppose there is a surjection \(\mathcal{O}_{C}^r\to\mathcal{F}\to 0\) for some \(r\geq 1\),
	Then \(\mathcal{F}\simeq\mathcal{O}_{C}\).
\end{lemma}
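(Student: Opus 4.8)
The plan is to produce an honest inclusion $\mathcal{O}_C\hookrightarrow\mathcal{F}$ from a single global section and then compare degrees: a rank-one torsion-free sheaf containing $\mathcal{O}_C$ can differ from it only by a finite-length quotient, and the length of that quotient equals $\deg\mathcal{F}$. To begin I would extract a nonzero section. The surjection $\mathcal{O}_C^r\twoheadrightarrow\mathcal{F}$ exhibits $\mathcal{F}$ as globally generated by the images $t_1,\dots,t_r$ of the standard basis; since $\mathcal{F}$ has rank one it is nonzero, so at least one $t_i$ is a nonzero section $t\in H^0(C,\mathcal{F})$.

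Next I would check that the induced map $\mathcal{O}_C\xrightarrow{\,t\,}\mathcal{F}$ is injective with zero-dimensional cokernel. Its kernel is a rank-zero subsheaf of $\mathcal{O}_C$, hence torsion, hence zero as $C$ is integral; and because $\mathcal{F}$ is torsion-free of rank one, the image of $t$ at the generic point $\eta$ is a nonzero vector in the one-dimensional $\mathbf{k}(C)$-space $\mathcal{F}_\eta$, so $t$ is generically an isomorphism. Writing $T\coloneqq\coker(t)$ this gives a short exact sequence
\[
	0\longrightarrow\mathcal{O}_C\xlongrightarrow{\,t\,}\mathcal{F}\longrightarrow T\longrightarrow 0
\]
with $T$ supported in dimension zero, hence of finite length. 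Additivity of Euler characteristics along this sequence then yields $\length(T)=\chi(\mathcal{F})-\chi(\mathcal{O}_C)=\deg\mathcal{F}$, so the hypothesis $\deg\mathcal{F}=0$ forces $\length(T)=0$, whence $T=0$ and $t$ is an isomorphism $\mathcal{O}_C\xrightarrow{\sim}\mathcal{F}$.

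The three steps are individually routine; the only point requiring care is the standing meaning of ``rank one'' and ``degree zero'' if $C$ is allowed to be reducible or non-reduced. In that generality I would first observe that global generation forces $\deg\mathcal{F}$ to be non-negative on every irreducible component of $C$, so that vanishing of the total degree pins it to zero on each component, and then run the section argument componentwise. For $C$ integral---the case used in the sequel---this subtlety does not arise and the argument above is complete.
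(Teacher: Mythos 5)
Your proof is correct, and it takes a genuinely different route from the paper's. Both arguments begin the same way: the surjection $\mathcal{O}_C^r\twoheadrightarrow\mathcal{F}$ yields a nonzero section, hence an exact sequence $0\to\mathcal{O}_C\to\mathcal{F}\to T\to 0$ with $T$ torsion, and the task is to kill $T$ by a degree count. The paper does this by passing to the normalisation $\pi\colon C'\to C$: it first argues that $\pi^*\mathcal{F}$ is a globally generated, degree-zero, torsion-free sheaf on the smooth curve $C'$, hence isomorphic to $\mathcal{O}_{C'}$, and then pulls back the exact sequence and compares degrees on $C'$ to get $\pi^*T=0$. You stay on $C$ and use additivity of Euler characteristics, $\length(T)=\chi(\mathcal{F})-\chi(\mathcal{O}_C)=\deg\mathcal{F}=0$. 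Your version is more elementary and sidesteps the one delicate point in the paper's argument, namely that the pullback of a torsion-free sheaf along the normalisation of a singular curve can acquire torsion, so the assertion that $\pi^*\mathcal{F}$ is torsion-free of degree zero really requires working with the pullback modulo torsion. The price is that your argument commits to the convention $\deg\mathcal{F}=\chi(\mathcal{F})-\chi(\mathcal{O}_C)$ for rank-one torsion-free sheaves; this is the standard convention and matches how degrees are computed (via Chern characters and Riemann--Roch) at the point where the lemma is invoked, so nothing is lost. Your closing remark is also the right scoping: the lemma is only applied to irreducible curves on a smooth surface, where $C$ is integral and the componentwise caveat is vacuous.
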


\begin{proof}
	Let \(\pi\colon C'\to C\) be the normalisation.
	Pulling back, we obtain a surjection \(\mathcal{O}_{C'}^r\to \pi^{*}\mathcal{F}\to 0\) with
	\(\pi^{*}\mathcal{F}\) being torsion-free of degree zero.
	Then \(\pi^{*}\mathcal{F}\simeq \mathcal{O}_{C'}\).
	By assumption, there is a non-trivial morphism \(\mathcal{O}_{C}\to \mathcal{F}\)
	whose cokernel is a torsion sheaf \(\mathcal{G}\).
	Thus we have a short exact sequence
	\[
		0\longrightarrow \mathcal{O}_{C}\longrightarrow \mathcal{F}\longrightarrow \mathcal{G}\longrightarrow 0.
	\]
	Pulling back this sequence to \(C'\) and comparing the degrees,
	we find that \(\pi^{*}\mathcal{G}=0\) which forces \(\mathcal{G}=0\).
	We conclude that \(\mathcal{F}\simeq\mathcal{O}_{C}\).
\end{proof}

\begin{lemma}\label{lem:main_ineq}
	Let \(r_i\geq 1\) be integers for \(1\leq i\leq m\).
	Then
	\[
		\Big(\sum_{i=1}^m r_i\Big)^2 \geq \sum_{i=1}^m (2m+1-2i)r_i
	\]
	with equality if and only if \(r_i=1\) for all \(i\).
\end{lemma}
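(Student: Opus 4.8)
The plan is to normalise against the equality configuration. Since the asserted equality occurs precisely when every $r_i=1$, I would first record the elementary identity
\[
	\sum_{i=1}^m (2m+1-2i) = m^2 :
\]
reindexing by $j=m+1-i$ sends the coefficient $2m+1-2i$ to the odd number $2j-1$, and the sum of the first $m$ odd numbers is $m^2$. This confirms that both sides equal $m^2$ at the all-ones point and motivates writing $r_i = 1+e_i$ with integers $e_i\geq 0$ to measure the deviation from that baseline.

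Next I would substitute $r_i=1+e_i$ and set $E=\sum_{i=1}^m e_i$, so that $\sum_i r_i = m+E$. Expanding the left-hand side gives $(m+E)^2 = m^2+2mE+E^2$, while the right-hand side becomes $m^2 + \sum_{i=1}^m (2m+1-2i)\,e_i$ by the identity above. The two copies of $m^2$ cancel upon subtraction, leaving
\[
	\Big(\sum_{i=1}^m r_i\Big)^2 - \sum_{i=1}^m (2m+1-2i)\, r_i
	= 2mE + E^2 - \sum_{i=1}^m (2m+1-2i)\, e_i.
\]

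The key simplification is to absorb the linear term coefficientwise: writing $2mE=\sum_i 2m\,e_i$ and using $2m-(2m+1-2i)=2i-1$, the difference collapses to
\[
	E^2 + \sum_{i=1}^m (2i-1)\, e_i .
\]
Both summands are manifestly non-negative, since $e_i\geq 0$ and $2i-1\geq 1>0$, which proves the inequality. For the equality characterisation, strict positivity of every coefficient $2i-1$ forces $\sum_i (2i-1)e_i=0$ to hold only when all $e_i=0$ (whence $E=0$ as well), i.e.\ $r_i=1$ for every $i$; conversely the all-ones choice gives equality by the opening identity. I do not anticipate a genuine obstacle: once the shift $r_i=1+e_i$ is in place, the statement reduces to the observation that a sum of non-negative terms vanishes iff each term does, so the only idea required is to normalise against the equality configuration rather than to attempt a direct term-by-term estimate of the two sides.
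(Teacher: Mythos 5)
Your proof is correct. The identity $\sum_{i=1}^m(2m+1-2i)=m^2$, the substitution $r_i=1+e_i$, and the resulting exact formula
\[
	\Big(\sum_{i=1}^m r_i\Big)^2-\sum_{i=1}^m(2m+1-2i)r_i \;=\; E^2+\sum_{i=1}^m(2i-1)e_i,
	\qquad E=\sum_{i=1}^m e_i,
\]
all check out, and the equality analysis follows immediately since every coefficient $2i-1$ is strictly positive.

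Your route is genuinely different from the paper's. The paper sets $S=\sum r_i$, first bounds $\sum_i i r_i\geq\sum_i i$ termwise (using $r_i\geq 1$) to get $\sum_i(2m+1-2i)r_i\leq(2m+1)S-m(m+1)$, and then invokes the quadratic inequality $S^2\geq(2m+1)S-m(m+1)$, i.e.\ $(S-m)(S-m-1)\geq 0$; this last step needs $S\geq m$ together with integrality of $S$ to exclude $m<S<m+1$, and the equality case is traced through both inequalities of the chain. Your normalisation replaces this two-step chain of inequalities by a single exact identity whose right-hand side is manifestly a sum of non-negative terms, which makes the equality characterisation immediate and, as a bonus, shows the lemma holds for arbitrary real $r_i\geq 1$ rather than only integers. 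The paper's version is shorter to state but leans on the integrality of $S$; yours is slightly longer to set up but self-evidently sharp.
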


\begin{proof}
	Set \(S=\sum_{i=1}^m r_i\).
	Then
	\begin{align}
		\sum_{i=1}^m (2m+1-2i)r_i & = (2m+1)S - 2\sum_{i=1}^m ir_i \nonumber \\
		& \leq (2m+1)S - 2\sum_{i=1}^m i \label{eq:simple_ineq} \\
		& = (2m+1)S - m(m+1). \nonumber
	\end{align}
	Thus \(S^2\geq (2m+1)S-m(m+1)\), with equality if and only if \(S=m\) or \(S=m+1\).
	Combining this with \eqref{eq:simple_ineq}, we see that equality in the original inequality holds
	if and only if \(r_i=1\) for all \(i\).
\end{proof}

\begin{proposition}\label{prop:moduli_quot_isomorphism}
	The moduli space \(M_Z(r,dH,d^2H^2/2)\) is isomorphic to the Quot-scheme \(\Quot(\mathcal{O}_{Z}^r;0,dH,-d^2H^2/2)\).
\end{proposition}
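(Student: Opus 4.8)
The plan is to show that the closed immersion
$M_Z(r,dH,d^2H^2/2)\hookrightarrow\Quot(\mathcal{O}_Z^r;0,dH,-d^2H^2/2)$ furnished by \zcref{prop:relation_quot_scheme} is in fact surjective, and then to promote this to an isomorphism of schemes by constructing an inverse morphism. By \zcref{rmk:locally_free_equivalence}, a quotient $[\mathcal{O}_Z^r\twoheadrightarrow\mathcal{L}]$ lies in the image of $M_Z$ exactly when the associated $\mathcal{E}$ is locally free, equivalently when $\mathcal{L}$ is pure of dimension one; in that case the torsion sheaf $\mathcal{T}'$ obstructing the backward construction of \zcref{prop:relation_quot_scheme} vanishes, and the quotient comes from a genuine sheaf stable pair. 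Thus the whole statement reduces to a purity claim: every $\mathcal{L}$ with $\ch(\mathcal{L})=(0,dH,-d^2H^2/2)$ admitting a surjection from $\mathcal{O}_Z^r$ is pure of dimension one, and in fact isomorphic to $\mathcal{O}_C$ for an effective $C\in\lvert dH\rvert$ (its Fitting support, of class $dH$ by \zcref{rmk:divisor_fitting_support}).

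To prove purity I would write $\mathcal{T}\subseteq\mathcal{L}$ for the maximal zero-dimensional subsheaf and $\mathcal{F}=\mathcal{L}/\mathcal{T}$ for its pure quotient. Since $\ch_1(\mathcal{T})=0$ and $\ch_2(\mathcal{T})=\length(\mathcal{T})\geq 0$, additivity of the Chern character gives $\ch_1(\mathcal{F})=dH$ and
$$\ch_2(\mathcal{F})=-\tfrac12 d^2H^2-\length(\mathcal{T}).$$
Hence it suffices to establish the lower bound $\ch_2(\mathcal{F})\geq -\tfrac12(dH)^2$ for every pure one-dimensional, globally generated $\mathcal{F}$ with $\ch_1(\mathcal{F})=dH$, together with the characterization that equality forces $\mathcal{F}\cong\mathcal{O}_C$. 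Granting this, the displayed identity forces $\length(\mathcal{T})=0$ and $\mathcal{F}\cong\mathcal{O}_C$, so $\mathcal{L}=\mathcal{O}_C$ is pure, as desired.

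For the bound I would filter $\mathcal{F}$ by rank-one torsion-free sheaves supported on the reduced irreducible components of its support. Because $Z$ has Picard number one, each such component has class $e_iH$ with $e_i\geq 1$, so the graded pieces $\mathcal{G}_j$ are rank-one sheaves on curves of class $e_{\iota(j)}H$ and degree $\delta_j$; the Chern-character formula for the pushforward of a line bundle along an effective divisor gives $\ch_2(\mathcal{G}_j)=-\tfrac12 e_{\iota(j)}^2H^2+\delta_j$, whence
$$\ch_2(\mathcal{F})+\tfrac12 d^2H^2 = H^2\sum_{j<k} e_{\iota(j)}e_{\iota(k)} + \sum_j \delta_j.$$
The first term is non-negative by ampleness of $H$, but the individual $\delta_j$ may be negative, so the estimate must be extracted from the global generation of $\mathcal{F}$ as a whole rather than component by component. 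This bookkeeping is precisely what \zcref{lem:main_ineq} packages: the weights $2m+1-2i$ encode the cross-term count above against the admissible degree drops of the filtration, and its equality case ($r_i=1$ for all $i$) forces every graded piece to be a degree-zero rank-one torsion-free quotient of a trivial bundle, hence $\mathcal{O}$ of its reduced support by \zcref{lem:torsion_free_sheaf_pullback}; these assemble to $\mathcal{F}\cong\mathcal{O}_C$ with $C\in\lvert dH\rvert$.

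Finally, to upgrade the resulting bijection to an isomorphism of schemes, I would run the inverse construction of \zcref{prop:relation_quot_scheme} in families. Since purity of $\mathcal{L}$ holds in every closed fibre over $\Quot(\mathcal{O}_Z^r;0,dH,-d^2H^2/2)$, the universal quotient is fibrewise pure, the obstruction $\mathcal{T}'$ vanishes identically, and the pointwise recipe globalises to a morphism $\Quot\to M_Z$ inverse to the closed immersion. The main obstacle is the $\ch_2$ lower bound together with its equality analysis: the difficulty is that one cannot argue degree by degree, since the $\delta_j$ are genuinely negative already for $\mathcal{O}_C$ when $C$ is a sum of several curves, so the argument must balance the positivity of $H^2\sum_{j<k}e_{\iota(j)}e_{\iota(k)}$ against the global-generation constraints on the filtration and match them exactly to the extremal case of \zcref{lem:main_ineq}.
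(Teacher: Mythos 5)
Your overall strategy coincides with the paper's: reduce via \zcref{prop:relation_quot_scheme} and \zcref{rmk:locally_free_equivalence} to showing that every quotient \(\mathcal{L}\) with the given Chern character is pure, and in fact isomorphic to \(\mathcal{O}_C\), by combining a filtration-based lower bound on \(\ch_2\) with \zcref{lem:main_ineq}. But the core step --- the lower bound and its equality analysis --- is not actually carried out, and the sketch of it does not work as written. You filter \(\mathcal{F}\) into rank-one graded pieces from the outset, which makes \zcref{lem:main_ineq} vacuous: its entire content is the equality case \(r_i=1\), where the \(r_i\) are the \emph{a priori arbitrary} ranks of the graded pieces of the upper filtration of \(\mathcal{F}\) along each irreducible component of its support. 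The paper's mechanism is: the upper filtration produces quotients \(\mathcal{R}_i\) of rank \(r_i\) with \(\sum r_i = n\), global generation of \(\mathcal{L}\) descends to surjections \(\mathcal{O}_C^r(-(m-i)C)\to\mathcal{R}_i\), the slope inequality then gives \(\deg\mathcal{R}_i\geq -(m-i)r_iC^2\), and only then does \zcref{lem:main_ineq} force \(r_i=1\), \(\mathcal{T}=0\), and equality of degrees. Your proposal never identifies these twisted surjections, which are the actual source of the lower bounds on the \(\delta_j\); saying that the lemma ``packages the bookkeeping'' defers precisely the point at issue.

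Two further statements in your sketch are incorrect or missing. First, the claim that equality forces every graded piece to be a \emph{degree-zero} quotient of a trivial bundle, hence \(\mathcal{O}\) of its reduced support, fails whenever the support is non-reduced: in the extremal case \(\mathcal{F}\simeq\mathcal{O}_{nC}\) the graded pieces are \(\mathcal{O}_C(-(n-i)C)\), of strictly negative degree for \(i<n\) (consistent with your own observation that the \(\delta_j\) are genuinely negative). \zcref{lem:torsion_free_sheaf_pullback} applies only after twisting by the correct multiple of \(C\), and knowing which twist requires the surjections above. Second, even once the graded pieces are identified as \(\mathcal{O}_C(-(m-i)C)\), one must still show that the iterated extension they assemble into is \(\mathcal{O}_{nC}\) itself rather than some other sheaf with the same associated graded; the paper handles this with an explicit inductive diagram chase using the maps induced by \(\mathcal{O}_{nC}^{\oplus r}\twoheadrightarrow\mathcal{F}\), and your proposal omits this step entirely. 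The scheme-theoretic upgrade at the end is fine and matches the paper's (equally brief) treatment.
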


\begin{proof}
	It suffices to show that every quotient of \(\mathcal{O}_{Z}^r\) with Chern character \((0,dH,-d^2H^2/2)\) is
	isomorphic to \(\mathcal{O}_{C}\) for some curve \(C\subseteq Z\) of degree \(dH^2\).

	Let \(\mathcal{O}_{Z}^r\to\mathcal{L}\to 0\) be a quotient in the Quot-scheme
	\(\Quot(\mathcal{O}_{Z}^r;0,dH,-d^2H^2/2)\).
	Let \(\mathcal{T}\subseteq \mathcal{L}\) be the maximal zero-dimensional (torsion) subsheaf,
	and set \(\mathcal{F}=\mathcal{L}/\mathcal{T}\).
	Then \(\mathcal{F}\) is a pure sheaf of dimension \(1\).

	Assume first that the support of \(\mathcal{F}\) is irreducible, say \(\Supp\mathcal{F}=nC\).
	By \cite[Proposition~5.10]{yuan2018motivic},
	the sheaf \(\mathcal{F}\) admits an \emph{upper filtration}
	\[
		0=\mathcal{F}^0\subsetneq \mathcal{F}^1\subsetneq \cdots \subsetneq \mathcal{F}^{m}=\mathcal{F}
	\]
	whose successive quotients \(\mathcal{R}_i\coloneqq\mathcal{F}^i/\mathcal{F}^{i-1}\) are
	sheaves on \(C\) of rank \(r_i\) (not necessarily torsion-free).
	These ranks satisfy \(\sum r_i=n\).
	For each \(i\geq 2\), the structure of \(\mathcal{F}\) induces a surjection
	\(\mathcal{R}_i(-C)\to\mathcal{R}_{i-1}\to 0\),
	and for every \(i\) there is also a surjection \(\mathcal{O}_{C}^r(-(m-i)C)\to\mathcal{R}_i\to 0\).

	Let \(\deg\) denote the degree on \(C\).
	Applying the slope inequality to the latter surjections yields \(\deg\mathcal{R}_i\geq -(m-i)r_iC^2\).
	Using additivity of Chern characters on \(C\), we compute
	\[
		\ch_2(\mathcal{F})=\sum_{i=1}^{m}\ch_2(\mathcal{R}_i)\geq \sum_{i=1}^m -\frac{r_i}{2}C^2 + \sum_{i=1}^m -(m-i)r_iC^2.
	\]
	Since \(C^2>0\) and \(\ch_2(\mathcal{F})\leq \ch_2(\mathcal{L})=-n^2C^2/2\),
	we have
	\[
		\frac{1}{2}n^2 = \frac{1}{2}(\sum r_i)^2\leq\sum\frac{r_i}{2}+(m-i)r_i.
	\]
	By \zcref{lem:main_ineq}, equality must hold everywhere,
	and hence \(r_i=1\) for all \(i\).
	In particular, \(\mathcal{T}=0\), and each \(\mathcal{R}_i\) is a rank-one torsion sheaf on \(C\).
	Moreover, the degree bound becomes an equality \(\deg\mathcal{R}_i=-(m-i)C^2\).
	By \zcref{lem:torsion_free_sheaf_pullback}, this implies \(\mathcal{R}_i\simeq \mathcal{O}_{C}(-(m-i)C)\).

	Using the filtration and the surjections,
	one constructs inductively a compatible system of extensions realizing \(\mathcal{F}\)
	as an iterated extension of the \(\mathcal{R}_i\).
	In particular, at the first stages one obtains a commutative diagram whose rows and columns are exact
	and whose maps are induced by the quotient \(\mathcal{O}_{nC}^{\oplus r}\twoheadrightarrow\mathcal{F}\).
	\[
		\begin{tikzcd}
			0 \ar[d] & 0 \ar[d] & 0 \ar[d] \\
			\mathcal{O}_{C}(-(n-1)C) \ar[r] \ar[d] & \mathcal{O}^r_{C}(-(n-1)C) \ar[r] \ar[d] &
			\mathcal{F}^1 \ar[d] \\
			\mathcal{O}_{2C}(-(n-2)C) \ar[r] \ar[d] & \mathcal{O}^r_{2C}(-(n-2)C) \ar[r] \ar[d] &
			\mathcal{F}^2 \ar[d] \\
			\mathcal{O}_{C}(-(n-2)C) \ar[r] \ar[d] & \mathcal{O}^r_{C}(-(n-2)C) \ar[r] \ar[d] &
			\mathcal{R}_2 \ar[d] \\
			0 & 0 & 0
		\end{tikzcd}
	\]
	By choosing the first column appropriately,
	one may arrange that the compositions in the top and third rows are the identity.
	It follows that \(\mathcal{F}^2\simeq \mathcal{O}_{2C}(-(n-2)C)\).
	By induction, we can show that \(\mathcal{L}=\mathcal{F}\simeq\mathcal{O}_{nC}\).

	If \(\Supp\mathcal{F}\) has more than one irreducible component,
	the same argument applies to each component separately, using additivity of Chern characters.
	In particular, \(\mathcal{L}\) is pure, and the given Chern character forces \(\mathcal{L}\simeq\mathcal{O}_{C}\)
	for some curve \(C\) of degree \(dH^2\).

	Therefore, every quotient in \(\Quot(\mathcal{O}_{Z}^r;0,dH,-d^2H^2/2)\)
	is of the form \(\mathcal{O}_{C}\),
	and in particular is pure.
	By \zcref{rmk:locally_free_equivalence} and the construction in \zcref{prop:relation_quot_scheme},
	this Quot-scheme coincides with the moduli space \(M_Z(r,dH,d^2H^2/2)\).
\end{proof}

\begin{proposition}\label{prop:hc_morphism_existence}
	There is a well-defined Hilbert-Chow morphism
	\[
		\sigma\colon M_Z(r,dH,d^2H^2/2) \longrightarrow \CDiv^d(Z)
	\]
	sending \([\mathcal{E},s]\) to the Fitting support of \(\mathcal{Q}\).
\end{proposition}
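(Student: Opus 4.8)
The plan is to upgrade the set-theoretic assignment $[\mathcal{E},s]\mapsto\mathcal{Z}_f(\mathcal{Q})$ to a morphism of schemes by realising it, in families, as the relative Fitting support of the universal cokernel. First I would invoke \zcref{prop:moduli_quot_isomorphism}, which identifies $M_Z(r,dH,d^2H^2/2)$ with the Quot-scheme $\Quot(\mathcal{O}_Z^r;0,dH,-d^2H^2/2)$; the latter carries a universal quotient, and under this identification every $\mathcal{E}$ is locally free with cokernel $\mathcal{Q}\cong\mathcal{O}_C$ for a curve $C$ of class $dH$. On the target side, since $Z$ has Picard number one with ample generator $H$, every effective divisor of class $dH$ lies in the complete linear system $|dH|$, so $\CDiv^d(Z)=\mathbb{P}(H^0(Z,\mathcal{O}_Z(dH)))$ is a smooth projective variety.

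Next I would verify well-definedness on points. By \zcref{cor:fitting_support_cartier}, the cokernel $\mathcal{Q}$ is pure of codimension one, its Fitting support $\mathcal{Z}_f(\mathcal{Q})$ is an effective Cartier divisor, and $\mathcal{Z}_f(\mathcal{Q})=\ch_1(\mathcal{E})=dH$ as a class; hence $\mathcal{Z}_f(\mathcal{Q})$ is a genuine point of $\CDiv^d(Z)$. Concretely, as $\mathcal{E}$ is locally free with $\det\mathcal{E}\cong\mathcal{O}_Z(dH)$, \zcref{exa:fitting_support_determinant} identifies this divisor with the vanishing locus of the determinant section $s_1\wedge\cdots\wedge s_r\in H^0(Z,\mathcal{O}_Z(dH))$.

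To promote this to a morphism I would descend the construction to the universal family. Let $\mathcal{S}=M_Z(r,dH,d^2H^2/2)$ and let $p\colon\mathcal{S}\times Z\to\mathcal{S}$ be the projection. The universal quotient furnishes a relative cokernel $\mathcal{Q}_{\mathrm{univ}}$ on $\mathcal{S}\times Z$, flat over $\mathcal{S}$, restricting to $\mathcal{Q}$ on each fibre. Because the formation of Fitting ideals commutes with arbitrary base change, $\Fit_0(\mathcal{Q}_{\mathrm{univ}})$ restricts fibrewise to $\Fit_0(\mathcal{Q})$; combined with the fibrewise purity of \zcref{lem:purity_of_cokernel,cor:fitting_support_cartier}, it cuts out a closed subscheme of $\mathcal{S}\times Z$ that is a relative effective Cartier divisor, flat over $\mathcal{S}$ of fibrewise class $dH$. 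By the universal property of $\CDiv^d(Z)$ as a subscheme of the Hilbert scheme, this relative divisor is classified by a morphism $\sigma\colon\mathcal{S}\to\CDiv^d(Z)$ whose value on points is the assignment above.

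The main obstacle is precisely the flatness of the relative Fitting support over $\mathcal{S}$, i.e.\ showing that $\Fit_0(\mathcal{Q}_{\mathrm{univ}})$ defines a relative Cartier divisor rather than merely a fibrewise family of divisors. Fibrewise purity is what makes this work: it guarantees that $\mathcal{Q}_{\mathrm{univ}}$ admits along its support the locally free square presentation of \zcref{rmk:divisor_fitting_support} in families, so that the wedge $s_1\wedge\cdots\wedge s_r$ determines a section of a relative line bundle with flat zero scheme. In the determinantal description this section is a section of $\det\mathcal{E}_{\mathrm{univ}}$, a line bundle on $\mathcal{S}\times Z$ restricting to $\mathcal{O}_Z(dH)$ on every fibre; its relative zero locus is automatically a relative Cartier divisor, and the projectivity of the target $\mathbb{P}(H^0(Z,\mathcal{O}_Z(dH)))$ absorbs the ambiguity of twisting $\det\mathcal{E}_{\mathrm{univ}}$ by a line bundle pulled back from $\mathcal{S}$.
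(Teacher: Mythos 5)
Your proposal is correct and follows essentially the same route as the paper: both reduce to the Quot-scheme via \zcref{prop:moduli_quot_isomorphism}, extract from the universal quotient a determinantal section of a line bundle on \(\mathfrak{Q}\times Z\) whose zero scheme is a relative effective Cartier divisor, and conclude by the representability of \(\CDiv^d(Z)\). The only cosmetic difference is that the paper takes \(\det\alpha\) of the universal kernel inclusion \(\mathcal{K}_{\mathfrak{Q}}\hookrightarrow\mathcal{O}_{\mathfrak{Q}}^r\), while you phrase the same ideal as \(\Fit_0\) of the universal cokernel (equivalently \(s_1\wedge\dots\wedge s_r\)); for a map of locally free sheaves of equal rank these coincide.
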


\begin{proof}
	It suffices to construct the morphism for the Quot-scheme
	\(\mathfrak{Q}\coloneqq\Quot(\mathcal{O}_{Z}^r;0,dH,-d^2/2)\)
	and then use the previous proposition.
	Let
	\[
		0\longrightarrow \mathcal{K}_{\mathfrak{Q}}\xlongrightarrow{\alpha} \mathcal{O}_{\mathfrak{Q}}^r\longrightarrow
		\mathcal{L}_{\mathfrak{Q}}\longrightarrow 0
	\]
	be the universal quotient on \(\mathfrak{Q}\times Z\).
	It is known that \(\mathcal{K}_{\mathfrak{Q}}\) is locally free of rank \(r\).
	Then determinant \(\det\alpha\) is then a section of the line bundle \(\det\mathcal{K}_{\mathfrak{Q}}^{-1}\),
	and its zero divisor defines a relative effective Cartier divisor on \(\mathfrak{Q}\times Z\) of degree \(dH^2\).
	This induces a morphism \[
		\sigma\colon\mathfrak{Q}\longrightarrow\CDiv^d(Z)
	\]
	which coincides with the Fitting support construction on closed points.
\end{proof}

\begin{theorem}\label{thm:hc_fibre_pic_one}
	The Hilbert-Chow morphism
	\[
		\sigma\colon M_Z(r,dH,d^2/2)\longrightarrow\CDiv^d(Z)
	\]
	is a \(\mathbb{P}^{r-1}\)-bundle.
\end{theorem}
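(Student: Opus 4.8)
The plan is to transport the whole problem to the Quot-scheme via the identification $M_Z(r,dH,d^2/2)\cong\mathfrak Q:=\Quot(\mathcal O_Z^r;0,dH,-d^2/2)$ of \zcref{prop:moduli_quot_isomorphism}, under which $\sigma$ becomes the map sending a quotient to the Fitting support of its cokernel (\zcref{prop:hc_morphism_existence}). Recall from the proof of \zcref{prop:moduli_quot_isomorphism} that every closed point of $\mathfrak Q$ is a surjection $q\colon\mathcal O_Z^r\twoheadrightarrow\mathcal O_C$ whose target is the structure sheaf of an effective divisor $C\in\CDiv^d(Z)$, and that two surjections define the same point of $\mathfrak Q$ exactly when they have the same kernel, i.e. differ by an automorphism of $\mathcal O_C$. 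Thus, set-theoretically, the fibre $\sigma^{-1}([C])$ is the set of surjections $\mathcal O_Z^r\to\mathcal O_C$ modulo $\Aut(\mathcal O_C)$, and the first task is to compute this.

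The main step is the identity $H^0(\mathcal O_C)=\mathbf k$ for every $C\in\CDiv^d(Z)$. In Picard number one every nonzero effective divisor is numerically a positive integer multiple of $H$, so for any decomposition $C=C'+C''$ into nonzero effective divisors one has $C'\cdot C''\geq H^2>0$; hence $C$ is numerically (1-)connected. Writing $C=\sum n_iC_i$ and peeling off an integral component via
\[
0\to\mathcal O_{C_1}\bigl(-(C-C_1)\bigr)\to\mathcal O_C\to\mathcal O_{C-C_1}\to 0,
\]
the outer term has negative degree $-C_1\cdot(C-C_1)<0$ and so contributes no global sections; an induction on $\sum n_i$ then gives $H^0(\mathcal O_C)=\mathbf k$. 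Since $\mathcal O_C$ is a cyclic $\mathcal O_C$-module, $\End(\mathcal O_C)=H^0(\mathcal O_C)=\mathbf k$, whence $\Aut(\mathcal O_C)=\mathbf k^{*}$; moreover $\Hom(\mathcal O_Z^r,\mathcal O_C)=H^0(\mathcal O_C)^{\oplus r}=\mathbf k^{\oplus r}$, and a scalar tuple $(t_1,\dots,t_r)$ is surjective precisely when it is nonzero. Therefore $\sigma^{-1}([C])\cong(\mathbf k^{r}\setminus\{0\})/\mathbf k^{*}=\mathbb P^{r-1}$.

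The second step upgrades this to a bundle. Let $\mathcal D\subseteq\CDiv^d(Z)\times Z$ be the universal effective Cartier divisor, $p\colon\CDiv^d(Z)\times Z\to\CDiv^d(Z)$ the projection, and $\mathcal M:=p_*\mathcal O_{\mathcal D}$. As $\mathcal O_{\mathcal D}$ is flat over $\CDiv^d(Z)$ and $h^0(\mathcal O_C)=1$ is constant by the previous step, cohomology and base change show $\mathcal M$ is a line bundle, trivialised by the global section $1$. I then claim $\mathfrak Q\cong\mathbb P_{\CDiv^d(Z)}(\mathcal M^{\oplus r})$ over $\CDiv^d(Z)$, compatibly with $\sigma$, and I would prove this by exhibiting mutually inverse morphisms: from the tautological quotient on $\mathbb P(\mathcal M^{\oplus r})$ together with the pullback of $\mathcal O_{\mathcal D}$ one builds a flat family of surjections $\mathcal O^{\oplus r}\to(\text{line bundle})\otimes\mathcal O_{\mathcal D}$, hence a classifying map to $\mathfrak Q$; conversely, the universal quotient $\mathcal O^{\oplus r}\to\mathcal L_{\mathfrak Q}$ on $\mathfrak Q\times Z$, after identifying $\mathcal L_{\mathfrak Q}$ with $(\sigma\times\id)^{*}\mathcal O_{\mathcal D}$ twisted by a line bundle pulled back from $\mathfrak Q$ (rigidity of the structure sheaf of a fixed divisor, via seesaw), pushes down to a surjection of rank-$r$ bundles, defining a map to $\mathbb P(\mathcal M^{\oplus r})$. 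Since $\mathcal M$ is a line bundle, $\mathbb P(\mathcal M^{\oplus r})\cong\CDiv^d(Z)\times\mathbb P^{r-1}$, so $\sigma$ is a Zariski-locally trivial $\mathbb P^{r-1}$-bundle.

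I expect the principal obstacle to be the key vanishing $H^0(\mathcal O_C)=\mathbf k$ for arbitrary, possibly non-reduced and reducible, $C$; the Picard-number-one hypothesis is precisely what forces numerical connectedness and makes this uniform over the whole base. The secondary difficulty is the scheme-theoretic matching in the last step: one must verify flatness of the constructed family, the base-change identification of $\mathcal M$ as a line bundle, and that the two classifying morphisms are genuinely inverse rather than merely bijective on closed points.
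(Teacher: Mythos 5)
Your proposal is correct and follows essentially the same route as the paper: both identify the fibre over \([C]\) with \(\mathbb{P}(\Hom(\mathcal{O}_C^r,\mathcal{O}_C))\simeq\mathbb{P}^{r-1}\) via the Quot-scheme description of \zcref{prop:moduli_quot_isomorphism}. The only difference is that you make explicit two points the paper elides --- the computation \(H^{0}(\mathcal{O}_C)=\mathbf{k}\) for arbitrary members of \(\CDiv^d(Z)\) via numerical \(1\)-connectedness (which is exactly where the Picard-number-one hypothesis is used) and the relativisation over \(\CDiv^d(Z)\) via the universal divisor and cohomology-and-base-change --- whereas the paper simply asserts that the identifications ``vary algebraically''.
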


\begin{proof}
	By the preceding proposition, any point of \(M_Z(r,dH,d^2/2)\)
	lying over a divisor \(C\in\CDiv^d(Z)\) corresponds to a quotient \(\mathcal{O}_{Z}^r\to\mathcal{O}_{C}\).
	Tensoring with \(\mathcal{O}_{C}\) we obtain
	\[
		\mathcal{O}_{C}^r\longrightarrow \mathcal{O}_{C}
	\]
	and conversely any such surjection arises in this way.
	Thus the fibre of \(\sigma\) over \([C]\) is naturally identified with
	the projective space \(\mathbb{P}(\Hom(\mathcal{O}_{C}^r,\mathcal{O}_{C}))\simeq \mathbb{P}^{r-1}\).
	These identifications vary algebraically with \(C\),
	so \(\sigma\) is a \(\mathbb{P}^{r-1}\)-bundle.
\end{proof}

\begin{proof}[Proof of \zcref{thm:moduli_space_picard_one}]
	It follows from \zcref{prop:moduli_quot_isomorphism,prop:hc_morphism_existence,thm:hc_fibre_pic_one}.
\end{proof}

\subsection*{Geometric interpretation in rank 2}

When \(r=2\), we may apply the geometric treatment of \zcref{sec:geometric_treatment}
to describe the fibres of the Hilbert-Chow morphism
\[
	\sigma\colon M_Z(2,dH,d^2H^2/2) \longrightarrow \CDiv^d(Z).
\]
In fact, the same method applies for any rank \(r\).

\begin{lemma}
	Let \(C\subseteq Z\) be a curve on a normal projective surface.
	If \(\pi\colon Z'\to Z\) is a birational morphism from another normal projective surface and
	\[
		\pi^{*}C=\widetilde{C}+\sum_j n_jE_j,
	\]
	where \(\widetilde{C}\) is the strict transform and  \(E_j\) are \(\pi\)-exceptional curves.
	Then
	\[
		C^2-\widetilde{C}^2=\widetilde{C}\cdot\sum n_jE_j.
	\]
\end{lemma}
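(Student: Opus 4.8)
The plan is to reduce the identity to two applications of the projection formula for the birational morphism $\pi$. The two auxiliary facts I would establish first are (i) $(\pi^{*}C)^2 = C^2$ and (ii) $\pi^{*}C\cdot E_j = 0$ for every $\pi$-exceptional curve $E_j$. Both are immediate consequences of the projection formula $\pi_*D\cdot C = D\cdot\pi^{*}C$ together with the standard identities $\pi_*\pi^{*}C = C$ and $\pi_*E_j = 0$: taking $D=\pi^{*}C$ gives $(\pi^{*}C)^2 = \pi_*(\pi^{*}C)\cdot C = C^2$, while taking $D=E_j$ gives $\pi^{*}C\cdot E_j = \pi_*E_j\cdot C = 0$. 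Since $Z$ and $Z'$ are only assumed normal, I would phrase these intersection numbers using Mumford's $\mathbb{Q}$-valued intersection theory on normal surfaces, for which the projection formula is valid (in the special case where $C$ is locally principal one may instead pull back the Cartier divisor directly).

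With (i) and (ii) in hand, the proof is a short expansion against the given decomposition $\pi^{*}C = \widetilde{C} + \sum_j n_j E_j$. First I would pair $\pi^{*}C$ with this decomposition and use (ii) to discard the exceptional terms, obtaining
\[
	C^2 = (\pi^{*}C)^2 = \pi^{*}C\cdot\widetilde{C} + \sum_j n_j\,(\pi^{*}C\cdot E_j) = \pi^{*}C\cdot\widetilde{C}.
\]
Then I would expand the remaining factor $\pi^{*}C$ once more by the same decomposition, which gives
\[
	\pi^{*}C\cdot\widetilde{C} = \widetilde{C}^2 + \widetilde{C}\cdot\sum_j n_j E_j.
\]
Combining the two displays yields $C^2 = \widetilde{C}^2 + \widetilde{C}\cdot\sum_j n_j E_j$, which is precisely the asserted identity after transposing $\widetilde{C}^2$.

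The only genuine subtlety — and hence the step I would treat most carefully — is the justification of (i) and (ii) in the possibly singular normal setting, i.e.\ fixing an intersection pairing on $Z$ and $Z'$ for which $\pi_*\pi^{*}C = C$, $\pi_*E_j = 0$, and the projection formula all hold simultaneously. Once that framework is pinned down, the remainder is purely formal: no geometric property of $\widetilde{C}$ beyond its role as the strict transform is used, since the entire computation is driven by the vanishing of the exceptional classes under $\pi_*$.
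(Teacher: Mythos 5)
Your proposal is correct and follows essentially the same route as the paper: both arguments rest on the two facts \((\pi^{*}C)^2=C^2\) and \(\pi^{*}C\cdot E_j=0\), and then expand the decomposition \(\pi^{*}C=\widetilde{C}+\sum_j n_jE_j\) (the paper expands the full square and substitutes \((\sum n_jE_j)^2=-\widetilde{C}\cdot\sum n_jE_j\), whereas you pair \(\pi^{*}C\) against the decomposition twice, which is the same computation reorganized). Your explicit attention to Mumford's \(\mathbb{Q}\)-valued intersection pairing on normal surfaces is a reasonable extra precaution that the paper leaves implicit.
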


\begin{proof}
	For each \(j\), we have \(\pi^{*}C\cdot E_j=0\), hence
	\[
		\widetilde{C}\cdot E_j = - \sum n_iE_i\cdot E_j
	\]
	and
	\[
		\widetilde{C}\cdot\sum n_jE_j = - \sum_jn_j(\sum_i n_iE_i\cdot E_j) = -(\sum n_jE_j)^2.
	\]
	Then
	\begin{align*}
		C^2 = (\pi^{*}C)^2 & = (\widetilde{C}+\sum n_jE_j)^2 \\
		& = \widetilde{C}^2 + 2\widetilde{C}\cdot\sum n_jE_j + (\sum n_jE_j)^2 \\
		& = \widetilde{C}^2 + \widetilde{C}\cdot\sum n_jE_j
	\end{align*}
	as desired.
\end{proof}

Let \(\pi\colon Z'\to Z\) be the minimal log resolution,
such that \(\pi^{*}Q+\Exc(\pi)\) has simple normal crossing.
As explained in \zcref{sec:geometric_treatment},
in the backward transform there is some freedom in the choice of blowup centres above \(\pi\)-exceptional curves,
but only the choices of blowup centres above the strict transform of \(Q\) affect the resulting associated model.
Thus we may first perform the Maruyama transforms along the exceptional components,
and only then along the strict transform of \(Q\).

By the previous lemma and the analysis in \zcref{sec:geometric_treatment},
for each component \(\widetilde{C}\) of the strict transform of \(Q\) the self-intersection number \(a\)
appears in \zcref{prop:self_intersection_blowup_curve} is at most \(-\widetilde{C}\cdot\sum_{j}n_jE_j\),
while the self-intersection number \(b\) equals \(\widetilde{C}^2\).
So \(b-a\geq C^2>0\) and in \zcref{cor:ruled_surface_inv},
we are always in the case where there is a unique negative section to choose as the blowup centre,
and the effective choices of blowup centres used in the backward transform form a projective line.
This gives a geometric interpretation of the fibres of the Hilbert-Chow morphism as \(\mathbb{P}^1\)'s
in the rank \(2\) case, compatible with \zcref{thm:hc_fibre_pic_one}.

\section{Negative curves}\label{sec:negative_curves}

In this section we fix a smooth projective surface \(Z\) containing a smooth rational curve \(C\)
with self-intersection \(-d\) for some \(d>0\).
We study the moduli space of sheaf stable pairs \([\mathcal{E},s]\)
of rank \(2\) on \(Z\) whose cokernel supported on \(C\).
By embedding this moduli space into a suitable Grassmannian, we describe its irreducible components.

We focus on \(M_Z(2,2C,-2d)\),
the moduli space of sheaf stable pairs \([\mathcal{E},s]\) with \(\ch(\mathcal{E})=(2,2C,-2d)\).
Either \(\mathcal{E}\) is locally free,
or there is an inclusion \(\mathcal{E}\hookrightarrow\mathcal{E}^{\vee\vee}\) whose cokernel is a skyscraper sheaf.
In the locally free case one may use either the algebraic (\zcref{lem:quotient_to_o2c}) or
the geometric (\zcref{rmk:geometric_inter_2C}) methods to identify the corresponding component;
it is isomorphic to an \(\mathbb{A}^{d+1}\)-bundle over \(\mathbb{P}^1\).
In the non‑locally free case the moduli problem reduces to a Quot-scheme on the curve \(C\).

\begin{remark}
	If the cokernel subscheme is reduced,
	then the moduli space \(M_Z(r,C,-d/2)\) is isomorphic to \(\mathbb{P}^1\).
	Indeed, arguing as in the proof of \zcref{prop:moduli_quot_isomorphism},
	one shows that any such sheaf stable pair has \(\mathcal{E}\) locally free.
	Hence \(M_Z(r,C,-d/2)\) identifies with the Quot-scheme \(\Quot(\mathcal{O}_{Z}^r;0,C,d/2)\)
	and the latter is \(\mathbb{P}^{r-1}\) by the same argument as in \zcref{thm:hc_fibre_pic_one}
	(or via the geometric interpretation of \zcref{sec:geometric_treatment}).
\end{remark}

\medskip

Let \(\mathcal{E},s\) be a sheaf stable pair in \(M_Z(2,2C,-2d)\).
Applying the dual functor \(\SHom(-,\mathcal{O}_{Z})\) to the short exact sequence
\[
	0\longrightarrow \mathcal{O}_Z^2 \xlongrightarrow{s} \mathcal{E}\longrightarrow \mathcal{Q}\longrightarrow 0,
\]
where \(\mathcal{Q}=\coker(s)\), we obtain
\begin{equation}\label{eq:surjective_F}
	0\longrightarrow \mathcal{E}^{\vee} \xlongrightarrow{s^{\vee}} \mathcal{O}_Z^2
	\longrightarrow\mathcal{F}\longrightarrow 0,
\end{equation}
where \(\mathcal{F}\) is the image of the natural morphism \(\mathcal{O}_Z^2\to \SExt^1(\mathcal{Q},\mathcal{O}_{Z})\)
and \(\mathcal{Z}_f(\mathcal{F})=\mathcal{Z}_f(\mathcal{Q})=2C\).

\begin{lemma}\label{lem:F_two_cases}
	The sheaf \(\mathcal{F}\) is isomorphic to either \(\mathcal{O}_{2C}\) or \(\mathcal{O}_C^2\).
\end{lemma}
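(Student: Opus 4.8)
The plan is to read off the local structure of $\mathcal{F}$ at the generic point $\eta$ of $C$ and then promote it to a global isomorphism using purity together with a Chern-character count. First I would record the numerics. Dualising the defining sequence and using $\mathcal{E}^{\vee}=(\mathcal{E}^{\vee\vee})^{\vee}$ with $\mathcal{E}^{\vee\vee}$ locally free, one computes from \eqref{eq:surjective_F} that $\ch(\mathcal{F})=\ch(\mathcal{O}_Z^2)-\ch(\mathcal{E}^{\vee})=(0,2C,2d-\ell)$, where $\ell=\length(\mathcal{E}^{\vee\vee}/\mathcal{E})\geq 0$. In particular $\ch_1(\mathcal{F})=2C$, so by \zcref{rmk:divisor_fitting_support} the Fitting support is $2C$ and the stalk $\mathcal{F}_\eta$ has length $2$ over the DVR $A=\mathcal{O}_{Z,\eta}$, whose uniformiser $t$ is a local equation of $C$. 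I would also note that $\mathcal{F}$ is pure of dimension $1$: it is the image of $\mathcal{O}_Z^2\to\SExt^1(\mathcal{Q},\mathcal{O}_Z)$, and the latter is a pure sheaf of dimension $1$ by the same duality argument as in \zcref{lem:purity_of_cokernel}, so its subsheaf $\mathcal{F}$ is pure as well.

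A length-$2$ module over a DVR is isomorphic to either $A/(t^2)$ or $(A/t)^{\oplus 2}$, and these two possibilities yield the two cases of the lemma. In the split case $\mathcal{F}_\eta\cong (A/t)^{\oplus 2}$, the subsheaf $\mathcal{I}_C\mathcal{F}$ with $\mathcal{I}_C=\mathcal{O}_Z(-C)$ vanishes at $\eta$, hence is zero-dimensional; purity of $\mathcal{F}$ forces $\mathcal{I}_C\mathcal{F}=0$, so $\mathcal{F}$ is an $\mathcal{O}_C$-module. Being torsion-free over the smooth curve $C\cong\mathbb{P}^1$, it is locally free of rank $2$, and the induced surjection $\mathcal{O}_C^2\twoheadrightarrow\mathcal{F}$ is an isomorphism by a rank count. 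Thus $\mathcal{F}\cong\mathcal{O}_C^2$ (and one checks $\ell=d$ here).

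The harder case is $\mathcal{F}_\eta\cong A/(t^2)$, where $\mathcal{F}$ is generically cyclic over $\mathcal{O}_{2C}$. After a constant change of basis of $\mathcal{O}_Z^2$ I may assume the first generator $e_1$ generates $\mathcal{F}_\eta$, since $\mathcal{F}_\eta\otimes k(\eta)$ is one-dimensional and Nakayama then lets me choose such a generator. Set $\mathcal{F}_1\coloneqq\im(\mathcal{O}_Z\xrightarrow{e_1}\mathcal{F})$; it is a cyclic subsheaf of the pure sheaf $\mathcal{F}$, hence itself pure of dimension $1$, so $\mathcal{F}_1=\mathcal{O}_Z/\mathcal{I}$ with $\mathcal{I}$ an unmixed height-one ideal. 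Because the local rings of the smooth surface $Z$ are unique factorisation domains, $\mathcal{I}$ is locally principal, so $\mathcal{F}_1=\mathcal{O}_D$ for an effective Cartier divisor $D$ supported on $C$; as $\mathcal{F}_{1,\eta}=\mathcal{F}_\eta$ has length $2$ we get $D=2C$ and $\mathcal{F}_1\cong\mathcal{O}_{2C}$. The quotient $\mathcal{F}/\mathcal{F}_1$ is then zero-dimensional with $\ch_2(\mathcal{F}/\mathcal{F}_1)=(2d-\ell)-2d=-\ell$; since its length is non-negative while $\ell\geq 0$, both vanish, whence $\ell=0$ and $\mathcal{F}=\mathcal{F}_1\cong\mathcal{O}_{2C}$ (the locally free case).

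I expect the main obstacle to be precisely this cyclic case: verifying that a cyclic pure one-dimensional subsheaf of $\mathcal{O}_Z$ on the smooth surface is the structure sheaf of an honest effective Cartier divisor (the unmixedness and local-principality step), and then closing the argument through the length computation so as to rule out intermediate values of $\ell$ and force $\mathcal{F}=\mathcal{F}_1$. The split case, by contrast, should follow routinely once purity is used to make $\mathcal{F}$ an $\mathcal{O}_C$-module.
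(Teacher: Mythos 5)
Your proof is correct, but it reaches the conclusion by a genuinely different route from the paper's. Both arguments start from the same dichotomy --- the stalk of \(\mathcal{F}\) at the generic point \(\eta\) of \(C\) has length two over the DVR \(\mathcal{O}_{Z,\eta}\), hence is either cyclic or split, which is the paper's ``\(\rk\mathcal{F}=1\), \(\mathcal{Z}_a(\mathcal{F})=2C\) versus \(\rk\mathcal{F}=2\), \(\mathcal{Z}_a(\mathcal{F})=C\)'' --- but they globalise it differently. The paper cites a lemma of Yang to conclude that the pure sheaf \(\mathcal{F}\) is locally free on its annihilator support and then argues geometrically: \(\mathbb{P}(\mathcal{F})\) is a projective subbundle of \(Z\times\mathbb{P}^1\) over \(\mathcal{Z}_a(\mathcal{F})\), equal to \(X_C\) in the split case and a cross-section of \(X_{2C}\) in the cyclic case, whose pushforward is \(\mathcal{O}_{2C}\). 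You stay with modules: in the split case purity kills \(\mathcal{I}_C\mathcal{F}\) and a rank count finishes; in the cyclic case you exhibit the cyclic subsheaf \(\mathcal{F}_1\cong\mathcal{O}_{2C}\) (using that an unmixed height-one ideal in the UFD \(\mathcal{O}_{Z,z}\) is principal) and then \(\ch_2(\mathcal{F})=2d-\ell\leq 2d=\ch_2(\mathcal{O}_{2C})\) forces \(\mathcal{F}=\mathcal{F}_1\) and \(\ell=0\). Your version is more self-contained (no appeal to Yang's lemma or to the cross-section picture) and as a bonus re-derives \(\length(\mathcal{E}^{\vee\vee}/\mathcal{E})=0\) in the cyclic case, i.e.\ one direction of \zcref{lem:E_locally_free}; the paper's version is shorter and matches the projective-bundle formalism of \zcref{sec:geometric_treatment}. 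The one step worth writing out fully is the purity of \(\mathcal{F}\): rather than gesturing at ``the same duality argument'', the cleanest justification is that \(\mathcal{E}^{\vee}\) is reflexive, hence locally free on the smooth surface \(Z\), so \(\mathcal{F}\) has homological dimension at most one and therefore no zero-dimensional associated points --- which is exactly the paper's one-line remark.
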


\begin{proof}
	Since there is a surjection \(\mathcal{O}_{Z}^2\to \mathcal{F}\), we have \(\rk \mathcal{F}\leq 2\).
	So either \(\rk \mathcal{F}=2\) and \( \mathcal{Z}_a(\mathcal{F})=C\) or
	\(\rk \mathcal{F}=1\) and \(\mathcal{Z}_a(\mathcal{F})=2C\).
	Note also that \(\mathcal{F}\) is pure as \(\mathcal{E}^{\vee}\) is locally free.
	It then follows that \(\mathcal{F}\) is locally free \cite[Lemma~2.5]{yang2003coherent}.

	Consider the trivial projective bundle \(\pi\colon X=\mathbb{P}(\mathcal{O}_Z^2)\to Z\).
	By \zcref{eq:surjective_F}, \(Y\coloneqq\mathbb{P}(\mathcal{F})\) can be viewed as a subscheme of \(X\)
	and also a projective subbundle of \(X_D\simeq D\times\mathbb{P}^1\)
	where \(D=\mathcal{Z}_a(\mathcal{F})\).
	So either \(\mathcal{Z}_a(\mathcal{F})=C\), \(Y=X_C\) and \(\mathcal{F}\simeq \mathcal{O}_{C}^2\)
	or \(\mathcal{Z}_a(\mathcal{F})=2C\) and \(Y\) is a cross-section of \(X_{2C}\).
	In the latter case, pushing forward the exact sequence
	\[
		0\longrightarrow \mathcal{I}_Y\otimes\mathcal{O}_{X}(1)\longrightarrow \mathcal{O}_{X}(1)\longrightarrow
		\mathcal{O}_{X}(1)\otimes\mathcal{O}_{Y}\longrightarrow 0
	\]
	gives
	\[
		0\longrightarrow \pi_*(\mathcal{I}_Y\otimes\mathcal{O}_{X})\longrightarrow \mathcal{O}_Z^2\longrightarrow
		\mathcal{F}\longrightarrow 0
	\]
	(\cite[Proof of Proposition~1.6]{maruyama1982elementary}),
	which is \eqref{eq:surjective_F}.
	Since \(\mathcal{O}_{X}(1)\) is free, \(\mathcal{O}_{X}(1)\otimes\mathcal{O}_{Y}=\mathcal{O}_{Y}\)
	and consequently \(\mathcal{F}=\pi_*\mathcal{O}_{Y}\simeq\mathcal{O}_{2C}\).
\end{proof}

By the correspondence \textbf{from a sheaf stable pair to a quotient} in \zcref{prop:relation_quot_scheme}
and using the notations there, we obtain an short exact sequence
\begin{equation}\label{eq:torsion_free_exact}
	0\to \mathcal{T}\to \mathcal{L} \to \mathcal{F} \to 0,
\end{equation}
where \(\mathcal{T}\) is the maximal torsion subsheaf of \(\mathcal{L}\) and \(\ch(\mathcal{L})=(0,2C,2d)\).

\begin{lemma}\label{lem:E_locally_free}
	The sheaf \(\mathcal{E}\) is locally free if and only if \(\mathcal{F}\simeq \mathcal{O}_{2C}\).
	In that situation, \(\mathcal{L}=\mathcal{F}\simeq \mathcal{O}_{2C}\).
\end{lemma}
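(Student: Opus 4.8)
The plan is to combine the dichotomy of \zcref{lem:F_two_cases} with a short Chern-character count, reading off local freeness of \(\mathcal{E}\) from the length of the torsion sheaf \(\mathcal{T}\) via \zcref{rmk:locally_free_equivalence}. The two candidates for \(\mathcal{F}\) are separated by their second Chern characters, and comparing with the fixed value \(\ch(\mathcal{L})=(0,2C,2d)\) determines \(\length\mathcal{T}\) exactly.

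First I would compute the Chern characters of the two candidates using the pushforward formula \(\ch_2(j_*\mathcal{L})=-\tfrac{1}{2}[D]^2+j_*\ch_1(\mathcal{L})\) of \cite[p.30, Lemma~1]{friedman1998algebraic}. Regarding \(2C\) as an effective Cartier divisor with \((2C)^2=4C^2=-4d\) gives \(\ch(\mathcal{O}_{2C})=(0,2C,2d)\), while writing \(\mathcal{O}_C^2=j_*(\mathcal{O}_C\oplus\mathcal{O}_C)\) and using additivity together with \(\ch_2(j_*\mathcal{O}_C)=-\tfrac{1}{2}C^2=d/2\) gives \(\ch(\mathcal{O}_C^2)=(0,2C,d)\).

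Next I would feed these into the sequence \eqref{eq:torsion_free_exact}. Since \(\mathcal{T}\) is the maximal torsion subsheaf of \(\mathcal{L}\) and the quotient \(\mathcal{F}\) is pure of dimension one, \(\mathcal{T}\) is zero-dimensional, so \(\ch(\mathcal{T})=(0,0,\length\mathcal{T})\). Additivity along \eqref{eq:torsion_free_exact} then yields \(\length\mathcal{T}=2d-\ch_2(\mathcal{F})\). Hence \(\mathcal{F}\simeq\mathcal{O}_{2C}\) forces \(\length\mathcal{T}=0\), that is \(\mathcal{T}=0\) and \(\mathcal{L}=\mathcal{F}\simeq\mathcal{O}_{2C}\), whereas \(\mathcal{F}\simeq\mathcal{O}_C^2\) forces \(\length\mathcal{T}=d>0\). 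It is precisely the hypothesis \(d>0\) that keeps these two cases genuinely disjoint.

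Finally, \zcref{rmk:locally_free_equivalence} says \(\mathcal{E}\) is locally free if and only if \(\mathcal{L}\) is pure, i.e.\ if and only if \(\mathcal{T}=0\); by the preceding computation this occurs exactly when \(\mathcal{F}\simeq\mathcal{O}_{2C}\), and then \(\mathcal{L}=\mathcal{F}\simeq\mathcal{O}_{2C}\), giving both assertions of the lemma. There is no serious obstacle here: the argument is essentially bookkeeping. The one point requiring care is the claim that \(\mathcal{T}\) is zero-dimensional, so that it contributes only to \(\ch_2\); this rests on the purity of \(\mathcal{F}\) established in \zcref{lem:F_two_cases}, which is exactly what legitimizes the length computation.
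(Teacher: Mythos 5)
Your argument is correct and follows essentially the same route as the paper's own proof: the dichotomy of \zcref{lem:F_two_cases}, the Chern character additivity \(\ch(\mathcal{L})=\ch(\mathcal{F})+\ch(\mathcal{T})\) along \eqref{eq:torsion_free_exact} comparing \(\ch_2(\mathcal{O}_{2C})=2d\) with \(\ch_2(\mathcal{O}_C^2)=d\), and the criterion of \zcref{rmk:locally_free_equivalence} identifying local freeness of \(\mathcal{E}\) with \(\mathcal{T}=0\). The only difference is that you spell out the Chern character computations via the pushforward formula, which the paper takes as known.
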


\begin{proof}
	By \zcref{rmk:locally_free_equivalence},
	\(\mathcal{E}\) is locally free precisely when the sheaf \(\mathcal{L}\) is pure,
	i.e., when \(\mathcal{T}=0\).
	\zcref{eq:torsion_free_exact} gives \(\ch(\mathcal{L})=\ch(\mathcal{F})+\ch(\mathcal{T})\).
	Since \(\ch(\mathcal{O}_{2C})=(0,2C,2d)\) and \(\ch(\mathcal{O}_{C}^{2})=(0,2C,d)\),
	the only way to have \(\mathcal{T}=0\) is \(\mathcal{F}\simeq\mathcal{O}_{2C}\).
	Conversely, if \(\mathcal{F}\simeq\mathcal{O}_{2C}\) then \(\mathcal{T}=0\) and \(\mathcal{L}=\mathcal{F}\).
\end{proof}

\begin{lemma}\label{lem:non_pure_case}
	If \(\mathcal{E}\) is not locally free, then \(\mathcal{F}\simeq\mathcal{O}_{C}^2\) and \(h^0(Z,\mathcal{T})=d\).
\end{lemma}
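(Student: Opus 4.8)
The plan is to derive both assertions essentially formally from the two preceding lemmas, supplemented by an elementary Chern-character computation. First I would pin down the isomorphism type of \(\mathcal{F}\). By \zcref{lem:F_two_cases}, \(\mathcal{F}\) is isomorphic to either \(\mathcal{O}_{2C}\) or \(\mathcal{O}_{C}^{2}\), and \zcref{lem:E_locally_free} identifies the case \(\mathcal{F}\simeq\mathcal{O}_{2C}\) precisely with local freeness of \(\mathcal{E}\). Since by hypothesis \(\mathcal{E}\) is not locally free, the first alternative is excluded, so the dichotomy forces \(\mathcal{F}\simeq\mathcal{O}_{C}^{2}\).

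Next I would extract the invariant \(h^0(Z,\mathcal{T})\) by bookkeeping Chern characters along the short exact sequence \eqref{eq:torsion_free_exact}. Recall from the Friedman formula (applied with \(C^2=-d\)) that \(\ch(\mathcal{O}_{C})=(0,C,d/2)\), whence \(\ch(\mathcal{F})=\ch(\mathcal{O}_{C}^{2})=(0,2C,d)\); combined with \(\ch(\mathcal{L})=(0,2C,2d)\), additivity of the Chern character gives
\[
	\ch(\mathcal{T})=\ch(\mathcal{L})-\ch(\mathcal{F})=(0,0,d).
\]

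Finally I would translate \(\ch_2(\mathcal{T})=d\) into the stated count of global sections. Since \(\mathcal{T}\) is the maximal torsion subsheaf of \(\mathcal{L}\) and the quotient \(\mathcal{F}\simeq\mathcal{O}_{C}^{2}\) is pure of dimension one, \(\mathcal{T}\) is supported in dimension zero; for such a sheaf its length equals both \(\ch_2(\mathcal{T})\) and \(h^0(Z,\mathcal{T})\) (over an algebraically closed field every closed point has residue field \(\mathbf{k}\), so length and \(\mathbf{k}\)-dimension agree), and therefore \(h^0(Z,\mathcal{T})=d\). There is no serious obstacle here: the structural input has already been supplied by \zcref{lem:F_two_cases,lem:E_locally_free}, and the only points demanding care are the sign bookkeeping in the Chern-character computation (which hinges on \(C^2=-d\)) and the verification that \(\mathcal{T}\) is genuinely zero-dimensional, so that its length, its \(\ch_2\), and its space of global sections all coincide.
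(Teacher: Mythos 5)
Your proposal is correct and follows essentially the same route as the paper: exclude \(\mathcal{F}\simeq\mathcal{O}_{2C}\) via \zcref{lem:F_two_cases,lem:E_locally_free}, compute \(\ch(\mathcal{O}_{C}^{2})=(0,2C,d)\) to get \(\ch_2(\mathcal{T})=d\), and identify this with \(h^0(Z,\mathcal{T})\) because \(\mathcal{T}\) is a zero-dimensional skyscraper sheaf. The only difference is that you spell out the Friedman-formula computation and the length-equals-\(h^0\) identification, which the paper leaves implicit.
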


\begin{proof}
	By \zcref{lem:F_two_cases,lem:E_locally_free},
	the only remaining possibility is \(\mathcal{F}\simeq \mathcal{O}_C^2\).
	Now \(\ch(\mathcal{F})=(0,2C,d)\), so \(\ch_2(\mathcal{T})=d\).
	As \(\mathcal{T}\) is a skyscraper sheaf,
	we have \(h^{0}(Z,\mathcal{T})=\ch_2(\mathcal{T})=d\).
\end{proof}

We next describe the relationship between moduli space \(M(2,2C,-2d)\) and
the Quot-scheme \(\Quot(\mathcal{O}_{Z}^2;0,2C,2d)\) appearing in \zcref{prop:relation_quot_scheme}.

\begin{definition}
	We denote some loci in the moduli space as follows.
	\begin{itemize}[leftmargin=*]
		\item \(M_f\subseteq M(2,2C,-2d)\), the subscheme parametrising sheaf stable pairs \(\mathcal{E},s\)
			with \(\mathcal{E}\) locally free.
		\item \(M_f'=M_Z(2,2C,-2d)\setminus M_f\), the complement of \(M_f\),
			parametrising sheaf stable pairs \(\mathcal{E},s\) with \(\mathcal{E}\) non-locally free.
		\item \(\Theta\subseteq\Quot(\mathcal{O}_{Z}^2,0,2C,2d)\), the subscheme parametrising quotients
			\(\mathcal{O}_Z^2\to\mathcal{L}\) for which \(\Supp(\mathcal{T})\subseteq C\),
			where \(\mathcal{T}\) is the maximal torsion subsheaf of \(\mathcal{L}\).
		\item \(\Theta_d\subseteq\Theta\), the locus quotients with \(\mathcal{T}\neq 0\) supports on \(C\).
		\item \(\Theta_p\subseteq\Theta\), the locus of quotients with \(\mathcal{L}\) pure,
			which is also the complement of \(\Theta_d\) in \(\Theta\).
	\end{itemize}
\end{definition}

\begin{theorem}\label{thm:M_f_theta_p}
	The subscheme \(M_f\) is isomorphic to \(\Theta_p\).
	The subscheme \(M_f'\) is isomorphic to \(\Theta_d\), which is also isomorphic to \(\Quot(\mathcal{O}_C^2, d)\).
	Consequently, the moduli space \(M(2,2C,-2d)\) is isomorphic to \(\Theta\).
\end{theorem}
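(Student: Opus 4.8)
The plan is to deduce all three isomorphisms from the closed embedding
\[
	\Phi\colon M_Z(2,2C,-2d)\hooklongrightarrow\Quot(\mathcal{O}_Z^2;0,2C,2d)
\]
furnished by \zcref{prop:relation_quot_scheme}, sending $[\mathcal{E},s]$ to the quotient $\mathcal{O}_Z^2\to\mathcal{L}$ constructed there, and then to match the two natural stratifications on the two sides. By \zcref{rmk:locally_free_equivalence}, $\mathcal{E}$ is locally free if and only if $\mathcal{L}$ is pure, so $\Phi(M_f)$ lands in the pure locus and $\Phi(M_f')$ in the non-pure locus. The strategy is to identify each stratum separately and then glue.

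First I would treat the locally free stratum $M_f$. For $[\mathcal{E},s]\in M_f$, \zcref{lem:F_two_cases} and \zcref{lem:E_locally_free} force $\mathcal{L}=\mathcal{F}\cong\mathcal{O}_{2C}$, so $\Phi$ carries $M_f$ into the locus of surjections $\mathcal{O}_Z^2\to\mathcal{O}_{2C}$; conversely, a pure quotient with the prescribed Chern character and Fitting support $2C$ must be $\mathcal{O}_{2C}$, since the rank-two alternative $\mathcal{O}_C^2$ has $\ch_2=d\neq 2d$. Thus $\Phi(M_f)=\Theta_p$. For the reverse inclusion one notes that when $\mathcal{L}$ is pure the obstruction sheaf $\mathcal{T}'=\SExt^2(\mathcal{T},\omega_Z)$ vanishes, so the "composition is zero" condition in the quotient-to-pair construction of \zcref{prop:relation_quot_scheme} is vacuous and every point of $\Theta_p$ comes from a (locally free) pair. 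This yields $M_f\cong\Theta_p$ as schemes.

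The heart of the argument is the non-locally free stratum. For $[\mathcal{E},s]\in M_f'$, \zcref{lem:non_pure_case} gives $\mathcal{F}\cong\mathcal{O}_C^2$ and $\length\mathcal{T}=d$; the sequence \eqref{eq:surjective_F} then identifies $\mathcal{E}^\vee\cong\mathcal{O}_Z(-C)^2$, so the reflexive hull is the \emph{fixed} bundle $\mathcal{E}^{\vee\vee}\cong\mathcal{O}_Z(C)^2$ and $\mathcal{E}=\ker(\mathcal{O}_Z(C)^2\to\mathcal{T}')$ for a length-$d$ quotient $\mathcal{T}'=\mathcal{E}^{\vee\vee}/\mathcal{E}$. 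The existence of the section $s$, i.e. of the inclusion $\mathcal{O}_Z^2\hookrightarrow\mathcal{E}$, is exactly the vanishing of the composite $\mathcal{O}_Z^2\hooklongrightarrow\mathcal{O}_Z(C)^2\to\mathcal{T}'$, and this forces $\mathcal{T}'$ to factor through $\mathcal{O}_Z(C)^2/\mathcal{O}_Z^2\cong\iota_*\mathcal{O}_{\mathbb{P}^1}(-d)^2$, where $\iota\colon C\hookrightarrow Z$. Hence $\mathcal{T}'$ is an $\mathcal{O}_C$-module, i.e. a length-$d$ quotient on the curve $C$ itself; twisting by a line bundle gives $\Quot(\iota_*\mathcal{O}_{\mathbb{P}^1}(-d)^2,d)\cong\Quot(\mathcal{O}_C^2,d)$. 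The assignment $[\mathcal{E},s]\mapsto[\mathcal{O}_Z(C)^2\to\mathcal{T}']$, with inverse $\mathcal{T}'\mapsto\ker(\mathcal{O}_Z(C)^2\to\mathcal{T}')$, then exhibits $M_f'\cong\Quot(\mathcal{O}_C^2,d)$, and on the quotient side the same data is $\Theta_d$ via the duality $\mathcal{T}\leftrightarrow\mathcal{T}'$ of \zcref{lem:duality}. Finally, since $M_Z(2,2C,-2d)=M_f\sqcup M_f'$ and $\Theta=\Theta_p\sqcup\Theta_d$, and $\Phi$ is an embedding restricting to an isomorphism on each stratum, $\Phi$ is an isomorphism onto $\Theta$.

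The main obstacle I anticipate is twofold, and both parts concern the non-locally free stratum. First, the bijections above must be upgraded to genuine isomorphisms of schemes by running every construction in families over the base of the Quot-scheme, using the universal quotient, the flatness of $\mathcal{T}'$, and a relative form of \zcref{lem:duality}, rather than fibrewise. Second, and more subtly, I must verify that the condition defining $\Theta_d$ is \emph{scheme-theoretic} support on $C$, equivalent to the $\mathcal{O}_C$-module condition produced by the vanishing of the composite above; concretely, quotients whose torsion is only set-theoretically supported on $C$ but spreads in the normal direction should be excluded, so that $\Theta_d$ is truly the curve Quot-scheme $\Quot(\mathcal{O}_C^2,d)$ and not the larger punctual Quot-scheme of the surface. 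Pinning down this coincidence, together with the flatness needed to globalise, is where the real work lies.
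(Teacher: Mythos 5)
Your proposal is correct and follows essentially the same route as the paper: the locally free stratum is matched with \(\Theta_p\) via \zcref{prop:relation_quot_scheme,rmk:locally_free_equivalence}, and the non-locally-free stratum is identified with \(\Quot(\mathcal{O}_C^2,d)\) through the reflexive hull \(\mathcal{E}^{\vee\vee}\simeq\mathcal{O}_Z^2(C)\) and its length-\(d\) quotients supported on \(C\), exactly as in \zcref{prop:M_f'_and_quot,prop:theta_d_and_quot}. The scheme-theoretic-versus-set-theoretic support issue you flag for \(\Theta_d\) is a genuine subtlety, but it is not one you introduce: the paper's own \zcref{prop:theta_d_and_quot} treats it only implicitly, asserting that restriction gives an isomorphism \(\Hom(\mathcal{O}_{Z}^2(-C),\mathcal{T})\to\Hom(\mathcal{O}_C^2,\mathcal{T})\) ``because \(\mathcal{T}\) is supported on \(C\)'' --- a step that in fact requires \(\mathcal{T}\) to be an \(\mathcal{O}_C\)-module, i.e.\ the scheme-theoretic reading of the condition defining \(\Theta\).
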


To show \zcref{thm:M_f_theta_p}, we need the following results.

\begin{proposition}\label{prop:M_f'_and_quot}
	The subscheme \(M_f'\) is isomorphic to the Quote-scheme \(\Quot(\mathcal{O}_C^2, d)\).
\end{proposition}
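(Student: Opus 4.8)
The plan is to identify a non-locally-free pair with its reflexive hull together with a finite-length quotient, and then to recognise that quotient as a point of a Quot-scheme on the curve \(C\). Concretely, let \([\mathcal{E},s]\in M_f'\). By \zcref{lem:non_pure_case} we have \(\mathcal{F}\simeq\mathcal{O}_C^2\), so \eqref{eq:surjective_F} reads \(0\to\mathcal{E}^{\vee}\to\mathcal{O}_Z^2\to\mathcal{O}_C^2\to 0\). Since any map \(\mathcal{O}_Z^2\to\mathcal{O}_C^2\) factors through restriction and \(\Hom_{\mathcal{O}_C}(\mathcal{O}_C^2,\mathcal{O}_C^2)=M_2(\mathbf{k})\), surjectivity forces this map to be an invertible matrix times the restriction; hence \(\mathcal{E}^{\vee}\simeq\mathcal{O}_Z(-C)^2\), and dualising gives \(\mathcal{E}^{\vee\vee}\simeq\mathcal{O}_Z(C)^2\) with \(s^{\vee\vee}\) the tautological inclusion \(\sigma_C\cdot\mathrm{id}\) cutting out \(C\). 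Thus \(\mathcal{E}=\ker(\mathcal{O}_Z(C)^2\to\mathcal{T}'')\) for a skyscraper \(\mathcal{T}''\) of length \(d\) (the length being forced by \(\ch(\mathcal{E})=(2,2C,-2d)\) and \(\ch(\mathcal{O}_Z(C)^2)=(2,2C,-d)\)), and the cokernel \(\mathcal{Q}=\coker(s)=\mathcal{E}/\mathcal{O}_Z^2\) embeds into \(\mathcal{O}_Z(C)^2/\mathcal{O}_Z^2=\mathcal{O}_C(C)^2\) with quotient \(\mathcal{T}''\). This exhibits a point of \(\Quot(\mathcal{O}_C(C)^2,d)\).

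Conversely, given a length-\(d\) quotient \(\mathcal{O}_C(C)^2\to\mathcal{T}''\), I would set \(\mathcal{E}:=\ker(\mathcal{O}_Z(C)^2\to\mathcal{T}'')\), which is torsion-free as a subsheaf of a locally free sheaf, satisfies \(\mathcal{E}^{\vee\vee}=\mathcal{O}_Z(C)^2\), and is non-locally-free because \(\mathcal{T}''\neq 0\). The standard sections \(\mathcal{O}_Z^2\hookrightarrow\mathcal{O}_Z(C)^2\) land in \(\mathcal{E}\) precisely because the quotient factors through \(C\), and their cokernel is supported on \(C\); so \((\mathcal{E},s)\) is a sheaf stable pair with \(\ch(\mathcal{E})=(2,2C,-2d)\) (using \(\ch(j_*\mathcal{O}_C(C))=(0,C,-d/2)\) from the Friedman Chern-character formula). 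These assignments are mutually inverse on closed points, giving a bijection \(M_f'\leftrightarrow\Quot(\mathcal{O}_C(C)^2,d)\). Finally, tensoring by the line bundle \(\mathcal{O}_C(-C)\) identifies \(\Quot(\mathcal{O}_C(C)^2,d)\) with \(\Quot(\mathcal{O}_C^2,d)\), since tensoring by a line bundle is an equivalence preserving the length of zero-dimensional quotients.

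To promote this to an isomorphism of schemes I would compare universal families. From the universal length-\(d\) quotient on \(\Quot(\mathcal{O}_C^2,d)\times C\), twisting by \(\mathcal{O}_C(C)\), pushing forward to \(\Quot\times Z\), and taking the kernel of the induced quotient of \(\mathcal{O}_Z(C)^2\), one obtains a flat family of sheaf stable pairs, hence a morphism \(\Quot(\mathcal{O}_C^2,d)\to M_f'\); in the other direction the universal pair on \(M_f'\subseteq\Theta\) yields, via the inclusion \(\mathcal{E}\hookrightarrow\mathcal{O}_Z(C)^2\) followed by restriction to \(C\) and a twist, a flat family of length-\(d\) quotients of \(\mathcal{O}_C^2\), hence a morphism \(M_f'\to\Quot(\mathcal{O}_C^2,d)\). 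The main obstacle is family-theoretic control of the reflexive hull, since \emph{a priori} \(\mathcal{E}^{\vee\vee}\) need not commute with base change. Here this difficulty evaporates because \(\mathcal{E}^{\vee\vee}\) is the \emph{constant} bundle \(\mathcal{O}_Z(C)^2\) for every pair in \(M_f'\): the double dual is thereby replaced by the fixed inclusion \(\mathcal{E}\hookrightarrow\mathcal{O}_Z(C)^2\), flatness of the finite-length cokernel over the base follows from constancy of its length, and the two morphisms are visibly inverse by the pointwise computation above.
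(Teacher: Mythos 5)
Your proposal is correct and follows essentially the same route as the paper: both identify \(\mathcal{E}^{\vee\vee}\simeq\mathcal{O}_Z(C)^{2}\), realise \(\mathcal{E}\) as the kernel of a length-\(d\) quotient of \(\mathcal{O}_C(C)^{2}\) (equivalently, as the common cokernel \(\mathcal{T}'\) of \(\mathcal{E}\to\mathcal{E}^{\vee\vee}\) and \(\mathcal{Q}\to\mathcal{Q}'\)), and then untwist by \(\mathcal{O}_C(-C)\) to land in \(\Quot(\mathcal{O}_C^{2},d)\). You supply a few details the paper leaves implicit (why the surjection \(\mathcal{O}_Z^{2}\to\mathcal{O}_C^{2}\) is a constant invertible matrix times restriction, the Chern-character computation of the length, and the family-level upgrade using the constancy of the reflexive hull), but the underlying argument is the same.
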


\begin{proof}
	Let \(\mathcal{E},s\) be a sheaf stable pair in \(M(2,2C,-2d)\) with \(\mathcal{E}\) not locally free.
	Then we have a commutative diagram with exact rows
	\[
		\begin{tikzcd}
			0 \ar[r] & \mathcal{O}_Z^r \ar[r,"s"] \ar[d,"\cong"] & \mathcal{E} \ar[r] \ar[d,hookrightarrow] & \mathcal{Q}
			\ar[r] \ar[d] & 0 \\
			0 \ar[r] & \mathcal{O}_Z^r \ar[r,"s^{\vee\vee}"] & \mathcal{E}^{\vee\vee} \ar[r] & \mathcal{Q}' \ar[r] & 0,
		\end{tikzcd}
	\]
	where \(\mathcal{Q}'=\coker(s^{\vee\vee})\).
	By \zcref{lem:non_pure_case}, we have \(\mathcal{F}\simeq \mathcal{O}_C^2\).
	From the exact sequence \eqref{eq:surjective_F} it follows that \(\mathcal{E}^{\vee}\simeq\mathcal{O}_Z^2(-C)\);
	hence \(\mathcal{E}^{\vee\vee}\simeq \mathcal{O}_Z^2(C)\) and \(\mathcal{Q}'\simeq\mathcal{O}_C^2(C)\).
	The snake lemma shows that the middle and the right arrows have the same cokernel, say \(\mathcal{T}'\).
	Thus we obtain a surjection \(\mathcal{O}_C(C)\to \mathcal{T}'\to 0\).

	Conversely, given such a quotient \(\mathcal{O}_{C}(C)\twoheadrightarrow \mathcal{T}'\), define
	\[
		\mathcal{E}\coloneqq \ker(\mathcal{O}_{Z}^2(C)\twoheadrightarrow \mathcal{O}_{C}^2(C)\twoheadrightarrow
		\mathcal{T}').
	\]
	Since the composition
	\[
		\mathcal{O}_{Z}^2\longrightarrow \mathcal{O}_{Z}^2(C)\longrightarrow \mathcal{O}_{C}^2(C)\longrightarrow \mathcal{T}'
	\]
	is zero, we obtain an injection \(\mathcal{O}_Z^2 \to\mathcal{E}\)
	whose cokernel is a subsheaf of \(\mathcal{O}_{C}^2(C)\).
	This gives a sheaf stable pair in \(M(2,2c,-2d)\).

	By \zcref{prop:relation_quot_scheme},
	we have \(\mathcal{T}'=\SExt^2(\mathcal{T},\omega_Z)\).
	In particular, viewed as a sheaf on \(C\),
	\(\mathcal{T}'\) has length \(h^0(Z,\mathcal{T}')=d\).
	Moreover, quotients \(\mathcal{O}_C^2(C)\to\mathcal{T}'\to 0\) are in bijection with
	quotients \(\mathcal{O}_C^2\to\mathcal{T}'\to 0\) in \(\Quot(\mathcal{O}_C^2, d)\).
	Hence \(M_f'\simeq\Quot(\mathcal{O}_{C}^2,d)\).
\end{proof}

\begin{lemma}\label{lem:F_equals_oc2}
	Let \(\mathcal{O}_Z^2\to\mathcal{L}\) be a point in \(\Quot(\mathcal{O}_{Z}^2,0,2C,2d)\).
	Denote by \(\mathcal{T}\subseteq \mathcal{L}\) the maximal torsion subsheaf
	and set \(\mathcal{F}\coloneqq\mathcal{L}/\mathcal{T}\).
	If \(\mathcal{T}\neq 0\) then \(\mathcal{F}\simeq\mathcal{O}_C^2\).
	Consequently the support of \(\mathcal{T}\) has length \(d\).
\end{lemma}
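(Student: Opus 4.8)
The plan is to reduce to the dichotomy already proved in \zcref{lem:F_two_cases} and then extract $\length\mathcal{T}$ from an additivity computation on Chern characters. First I would observe that $\mathcal{F}=\mathcal{L}/\mathcal{T}$ is pure of dimension $1$ by its very construction, and that since $\mathcal{T}$ is supported in dimension zero we have $\ch(\mathcal{T})=(0,0,\length\mathcal{T})$; in particular $\ch_1(\mathcal{F})=\ch_1(\mathcal{L})=2C$. Composing the quotient maps yields a surjection $\mathcal{O}_Z^2\twoheadrightarrow\mathcal{L}\twoheadrightarrow\mathcal{F}$, so $\mathcal{F}$ is a pure one-dimensional quotient of $\mathcal{O}_Z^2$ whose Fitting support is $2C$ by \zcref{rmk:divisor_fitting_support}. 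These are precisely the hypotheses on which the proof of \zcref{lem:F_two_cases} rests.

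I would then invoke that proof essentially verbatim. The surjection forces $\rk\mathcal{F}\le 2$; combined with $\ch_1(\mathcal{F})=2C$ and the irreducibility of $C$, this leaves only the two cases $\rk\mathcal{F}=2$ with $\mathcal{Z}_a(\mathcal{F})=C$, or $\rk\mathcal{F}=1$ with $\mathcal{Z}_a(\mathcal{F})=2C$. Purity makes $\mathcal{F}$ locally free along its support, and the projective-bundle/cross-section analysis of \zcref{lem:F_two_cases} identifies $\mathcal{F}$ with $\mathcal{O}_C^2$ in the first case and with $\mathcal{O}_{2C}$ in the second. The one input that in \zcref{lem:F_two_cases} came from $\mathcal{E}^{\vee}$ being locally free---namely the purity of $\mathcal{F}$---is here automatic, so no additional argument is needed; this transfer is the single point worth spelling out, and it is routine.

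Finally I would do the Chern-character bookkeeping. Using $\ch(\mathcal{O}_{2C})=(0,2C,2d)$ and $\ch(\mathcal{O}_C^2)=(0,2C,d)$ as in \zcref{lem:E_locally_free}, together with the additivity $\ch(\mathcal{L})=\ch(\mathcal{F})+\ch(\mathcal{T})$ and $\ch_2(\mathcal{L})=2d$: the case $\mathcal{F}\simeq\mathcal{O}_{2C}$ would give $\length\mathcal{T}=\ch_2(\mathcal{L})-\ch_2(\mathcal{O}_{2C})=0$, contradicting $\mathcal{T}\neq 0$. Hence $\mathcal{F}\simeq\mathcal{O}_C^2$, and then $\length\mathcal{T}=\ch_2(\mathcal{L})-\ch_2(\mathcal{O}_C^2)=2d-d=d$, which is exactly the claimed length of the support of $\mathcal{T}$. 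The main (and essentially the only) obstacle is confirming that the dichotomy of \zcref{lem:F_two_cases} survives in this slightly more general setting where $\mathcal{F}$ need not a priori come from a sheaf stable pair; everything after that is a one-line numerical comparison.
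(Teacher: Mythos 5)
Your overall strategy is close to the paper's, and the final bookkeeping is exactly what the paper does, but the pivotal step --- importing the dichotomy of \zcref{lem:F_two_cases} ``essentially verbatim'' --- has a genuine gap. The rank-one branch of that lemma does not follow from purity, the surjection from \(\mathcal{O}_Z^2\), and \(\mathcal{Z}_f(\mathcal{F})=2C\) alone: it secretly uses that \(\ch_2(\mathcal{F})=2d\), which in the setting of \zcref{lem:F_two_cases} is forced by the exact sequence \(0\to\mathcal{E}^{\vee}\to\mathcal{O}_Z^2\to\mathcal{F}\to 0\). Without that numerical constraint, a pure rank-one quotient of \(\mathcal{O}_{2C}^2\) with Fitting support \(2C\) need not be \(\mathcal{O}_{2C}\): since \(\mathcal{O}_{2C}\cong\mathcal{O}_C\oplus\mathcal{O}_C(d)\) splits (\zcref{lem:global_sections_O2C}), one can pull back \(\mathcal{O}_{\mathbb{P}^1}(1)\) along the retraction \(2C\to C\) and obtain a globally generated line bundle on \(2C\) that is a quotient of \(\mathcal{O}_{2C}^2\) but is not \(\mathcal{O}_{2C}\). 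So the dichotomy ``\(\mathcal{F}\simeq\mathcal{O}_{2C}\) or \(\mathcal{O}_C^2\)'' is simply false in the generality in which you invoke it, and in the present lemma you are precisely in the regime where it cannot hold as stated, because here \(\ch_2(\mathcal{F})=2d-\length\mathcal{T}<2d\).

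The repair is the degree inequality that the paper uses in place of the identification \(\mathcal{F}\simeq\mathcal{O}_{2C}\): in the rank-one case \(\mathcal{F}\) is a globally generated line bundle on \(2C\), hence \(\deg\mathcal{F}\geq 0\) and
\[
	\ch_2(\mathcal{F})=-\tfrac{1}{2}(2C)^2+\deg\mathcal{F}\geq 2d,
\]
which contradicts \(\ch_2(\mathcal{F})<2d\). This rules out the rank-one case outright rather than pinning \(\mathcal{F}\) down to \(\mathcal{O}_{2C}\) and then computing lengths. Your rank-two branch (a surjection \(\mathcal{O}_C^2\to\mathcal{F}\) of torsion-free sheaves of equal rank on \(C\) is an isomorphism) and the final computation \(\length\mathcal{T}=2d-\ch_2(\mathcal{O}_C^2)=d\) are correct and agree with the paper; only the exclusion of the rank-one case needs to be rewritten along the lines above.
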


\begin{proof}
	Note that \(\mathcal{F}\) is a pure sheaf \cite[Definition~1.1.4]{huybrechts2010geometry}.
	Because \(\mathcal{T}\neq 0\) is a skyscraper sheaf,
	we have \(\ch_1(\mathcal{F})=\ch_1(\mathcal{L})=2C\).
	Moreover, \(h^0(\mathcal{T})+\ch_2(\mathcal{F})=\ch_2(\mathcal{L})=2d\).
	Since \(\mathcal{T}\neq 0\), we get \(h^0(\mathcal{T})>0\).
	Hence \(\ch_2(\mathcal{F})<2d\).
	If \(\mathcal{Z}_a(\mathcal{F})=2C\),
	since there is a surjection \(\mathcal{O}_{2C}^2\to \mathcal{F}\), \(\deg(\mathcal{F})\geq 0\),
	then \(\ch_2(\mathcal{F})=-\frac{1}{2}(2C)^2+\deg(\mathcal{F})\geq 2d\),
	which leads to contradiction.
	Hence \(\mathcal{Z}_a(\mathcal{F})=C\) and \(\mathcal{F}\) is locally free of rank \(2\) on \(C\).
	Since the surjective map \(\mathcal{O}_Z^2\to \mathcal{F}\to 0\),
	we obtain \(\mathcal{F}=\mathcal{O}_C^2\).
	In this case, \(\ch_2(\mathcal{F})=d\), which implies that \(\mathcal{T}\) has length \(d\).
\end{proof}

\begin{proposition}\label{prop:theta_d_and_quot}
	The subscheme \(\Theta_d\) is isomorphic to \(\Quot(\mathcal{O}_C^2,d)\).
\end{proposition}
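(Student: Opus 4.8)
The plan is to construct mutually inverse morphisms between $\Theta_d$ and $\Quot(\mathcal{O}_C^2,d)$, carried out over universal quotient sheaves so that they are genuine morphisms of schemes and not merely bijections on closed points. First I would record the structure of a point: by \zcref{lem:F_equals_oc2}, a closed point $[\mathcal{O}_Z^2\to\mathcal{L}]$ of $\Theta_d$ sits in
\[
	0\longrightarrow\mathcal{T}\longrightarrow\mathcal{L}\longrightarrow\mathcal{O}_C^2\longrightarrow0,
\]
with $\mathcal{T}$ the maximal torsion subsheaf, of length $d$. The induced surjection $\mathcal{O}_Z^2\to\mathcal{O}_C^2$ lies in $\Hom(\mathcal{O}_Z^2,\mathcal{O}_C^2)$, which is the space of $2\times2$ matrices over $\mathbf{k}$ because $C$ is connected and proper; hence it agrees with the canonical restriction up to an automorphism of the target, and its kernel is $\mathcal{N}\coloneqq\mathcal{O}_Z(-C)^2$. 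Writing $\mathcal{K}=\ker(\mathcal{O}_Z^2\to\mathcal{L})$, the snake lemma identifies $\mathcal{T}$ with the length-$d$ quotient $\mathcal{N}/\mathcal{K}$ of $\mathcal{N}$.

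The forward assignment sends $[\mathcal{L}]$ to the quotient $\mathcal{N}\to\mathcal{T}$, twisted by $\mathcal{O}_Z(C)$. The crucial input is that $\mathcal{T}$ is scheme-theoretically supported on $C$, i.e.\ an $\mathcal{O}_C$-module; granting this, $\mathcal{N}\to\mathcal{T}$ factors through $\mathcal{N}\otimes\mathcal{O}_C=\mathcal{O}_C(-C)^2$, and tensoring by $\mathcal{O}_C(C)$ yields a length-$d$ quotient $\mathcal{O}_C^2\twoheadrightarrow\mathcal{T}(C)$, a point of $\Quot(\mathcal{O}_C^2,d)$. The inverse assignment reverses this: from $\mathcal{O}_C^2\twoheadrightarrow\mathcal{G}$ I set $\mathcal{T}=\mathcal{G}(-C)$, form the composite surjection $\mathcal{N}\to\mathcal{N}\otimes\mathcal{O}_C=\mathcal{O}_C(-C)^2\to\mathcal{T}$, and define $\mathcal{L}=\mathcal{O}_Z^2/\ker$; one checks directly that $\mathcal{L}\in\Theta_d$ has torsion $\mathcal{T}$ and pure quotient $\mathcal{O}_C^2$, and that the two assignments are mutually inverse on points. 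To upgrade these to scheme morphisms I would repeat the constructions over the universal quotient on each side, using that the universal kernel is locally free (as in the proof of \zcref{prop:hc_morphism_existence}) and that the relative torsion is flat over the base, so that the kernels and twists formed above remain flat in families.

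The main obstacle is exactly the assertion that the torsion $\mathcal{T}$ is an $\mathcal{O}_C$-module, equivalently that $\mathcal{N}\to\mathcal{T}$ annihilates the subsheaf $\mathcal{N}\otimes\mathcal{O}_Z(-C)=\mathcal{O}_Z(-2C)^2$. The mere set-theoretic condition $\Supp\mathcal{T}\subseteq C$ does not by itself give this, since punctual quotients of a rank-two bundle on the surface $Z$ can in principle fail to descend to $C$. I expect to obtain the descent from the fact that $\mathcal{L}$ is \emph{globally} a quotient of $\mathcal{O}_Z^2$, fed through the duality of \zcref{lem:duality} and the correspondence of \zcref{prop:relation_quot_scheme}: there the sheaf $\SExt^2(\mathcal{T},\omega_Z)$ appears as a quotient of $\mathcal{K}'^{\vee}=\mathcal{O}_Z(C)^2$, and vanishing of the relevant composite $\mathcal{O}_Z^2\to\mathcal{K}'^{\vee}\to\SExt^2(\mathcal{T},\omega_Z)$ is equivalent to $\SExt^2(\mathcal{T},\omega_Z)$, and hence $\mathcal{T}$, being killed by the ideal of $C$. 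Once this $\mathcal{O}_C$-module property is secured the remaining checks are formal, and combined with \zcref{prop:M_f'_and_quot} the result also delivers the isomorphism $M_f'\cong\Theta_d$ needed for \zcref{thm:M_f_theta_p}.
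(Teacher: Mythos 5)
Your construction is essentially the paper's: both arguments use the snake lemma to identify the kernel of the induced surjection \(\mathcal{O}_Z^2\to\mathcal{F}\simeq\mathcal{O}_C^2\) with \(\mathcal{O}_Z(-C)^2\), trade a point of \(\Theta_d\) for a length-\(d\) quotient of \(\mathcal{O}_Z(-C)^2\) supported on \(C\), and then pass to quotients of \(\mathcal{O}_C^2\) (the paper via the asserted isomorphism \(\Hom(\mathcal{O}_Z^2(-C),\mathcal{T})\simeq\Hom(\mathcal{O}_C^2,\mathcal{T})\), you via the cleaner factorisation through \(\mathcal{N}\otimes\mathcal{O}_C\) followed by a twist). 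The one step you leave open --- that \(\mathcal{T}\) is scheme-theoretically an \(\mathcal{O}_C\)-module --- is exactly the step the paper dismisses in one line (``because \(\mathcal{T}\) is supported on \(C\), the restriction map \(\dots\) is an isomorphism''), and you are right to insist that set-theoretic support does not give it. Indeed, with the definition of \(\Theta_d\) taken literally the descent can fail: for instance the quotient \(\mathcal{O}_Z^2\twoheadrightarrow\bigl(\mathcal{O}_Z/\mathcal{I}_C\mathcal{I}_W\bigr)\oplus\mathcal{O}_C\), where \(W\) is the length-\(d\) fat point at some \(p\in C\) pointing transversally to \(C\) (locally \(\mathcal{I}_C\mathcal{I}_W=(xy,x^{d+1})\) with \(C=\{x=0\}\)), has Chern character \((0,2C,2d)\) and maximal torsion of length \(d\) supported at \(p\in C\), yet that torsion is not killed by \(\mathcal{I}_C\) and so corresponds to no point of \(\Quot(\mathcal{O}_C^2,d)\). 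So the descent genuinely has to be proved (equivalently, the scheme-theoretic support condition has to be built into the definition of \(\Theta\)). Your proposed repair is the right one and does close the gap for the points that matter: by \zcref{prop:relation_quot_scheme} a quotient arising from a sheaf stable pair satisfies the vanishing of \(\mathcal{O}_Z^2\to\mathcal{K}'^{\vee}\to\SExt^2(\mathcal{T},\omega_Z)\), which (since \(\mathcal{K}'^{\vee}/\mathcal{O}_Z^2\simeq\mathcal{O}_C(C)^2\)) says precisely that \(\SExt^2(\mathcal{T},\omega_Z)\) is an \(\mathcal{O}_C\)-module, and Matlis duality then transfers the annihilator to \(\mathcal{T}\). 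This establishes the isomorphism on the locus \(M_f'\subseteq\Theta_d\), which is what \zcref{thm:M_f_theta_p} actually requires; for the proposition as literally stated one must additionally sharpen the definition of \(\Theta_d\) as above. With that caveat your argument is complete and, by working with universal families, is more careful about the scheme structure than the paper's own proof.
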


\begin{proof}
	Let \(q\colon\mathcal{O}_Z^2\to\mathcal{L}\) be a point in \(\Theta_d\).
	By definition the maximal torsion subsheaf \(\mathcal{T}\subseteq \mathcal{L}\) is a skyscraper sheaf
	supported on \(C\) of length \(d\).
	Set \(\mathcal{K}\coloneqq \ker q\) and let \(\mathcal{F}\coloneqq \mathcal{L}/\mathcal{T}\).
	We have a commutative diagram with exact rows
	\[
		\begin{tikzcd}
			0 \ar[r] & \mathcal{K} \ar[r] \ar[d] & \mathcal{O}_{Z}^2 \ar[r] \ar[d,"="] & \mathcal{L}
			\ar[r] \ar[d] & 0 \\
			0 \ar[r] & \mathcal{K}' \ar[r] & \mathcal{O}_{Z}^2 \ar[r] & \mathcal{F} \ar[r] & 0.
		\end{tikzcd}
	\]
	The snake lemma that \(\mathcal{K}\hookrightarrow\mathcal{K}'\) is injective
	with cokernel isomorphic to \(\mathcal{T}\).
	By \zcref{lem:F_equals_oc2},
	\(\mathcal{F}\simeq\mathcal{O}_C^2\) and \(\mathcal{T}\) has length \(d\).
	Thus \(\mathcal{K}'\simeq\mathcal{O}_Z^2(-C)\).
	Then we obtain a surjection \(\mathcal{O}_Z^2(-C)\to\mathcal{T}\).

	Conversely, let \(p\colon\mathcal{O}_{Z}^2(-C)\to\mathcal{T}\)
	be a quotient with \(\Supp(\mathcal{T})\subseteq C\) and \(\ell(\mathcal{T})=d\).
	Let \(\mathcal{K}\coloneqq\ker p\).
	The injection \(\mathcal{K}\hookrightarrow \mathcal{O}_{Z}^2\) followed by the natural map
	\(\mathcal{O}_{Z}^2(-C)\to \mathcal{O}_{Z}^2\) yields an short exact sequence
	\[
		0\longrightarrow \mathcal{K}\longrightarrow \mathcal{O}_{Z}^2\longrightarrow \mathcal{L}\longrightarrow 0.
	\]
	The maximal torsion subsheaf of \(\mathcal{L}\) is isomorphic to \(\mathcal{T}\).
	Hence \(q\colon \mathcal{O}_{Z}^2\to \mathcal{L}\) is a point of \(\Theta_d\).

	Thus \(\Theta_d\) is canonically identified with the space of quotients \(O_{Z}^2(-C)\to\mathcal{T}\)
	with \(\Supp(\mathcal{T})\subseteq C\) and \(\ell(\mathcal{T})=d\).
	Finally, because \(\mathcal{T}\) is supported on \(C\),the restriction map
	\[
		\Hom(\mathcal{O}_{Z}^2(-C),\mathcal{T}) \longrightarrow \Hom(\mathcal{O}_C^2,\mathcal{T})
	\]
	is an isomorphism.
	Therefore, \(\Theta_d\) is isomorphic to the Quot-scheme \(\Quot(\mathcal{O}_{C}^2,d)\).
\end{proof}

\medskip

\begin{proof}[Proof of \zcref{thm:M_f_theta_p}]
	The first isomorphism is an immediate consequence of \zcref{prop:relation_quot_scheme}
	together with \zcref{rmk:locally_free_equivalence}.
	For the second isomorphism, \zcref{prop:relation_quot_scheme} gives \(M_f'\subseteq \Theta_d\).
	Combining this with \zcref{prop:M_f'_and_quot,prop:theta_d_and_quot} yields the required isomorphism.
\end{proof}

\subsection{Study \(M_f\)}

Since \(M_{f}\simeq \Theta_p\) by \zcref{thm:M_f_theta_p},
in order to parametrise all the sheaf stable pair \(\mathcal{E},s\) in \(M_Z(2,2C,-2d)\)
with \(\mathcal{E}\) locally free,
we only need to parametrise all the quotients \(\mathcal{O}_Z^2\to\mathcal{L}\to 0\) such that \(\mathcal{L}\) is pure.
Then by \zcref{lem:E_locally_free},
this is equivalent to parametrize all the quotients \(\mathcal{O}_Z^2\to\mathcal{O}_{2C} \to 0\).

\begin{lemma}\label{lem:global_sections_O2C}
	Let \(C\subseteq Z\) be a smooth rational curve with \(C^2=-d\).
	Then the ring of global sections of the structure sheaf \(\mathcal{O}_{2C}\) of the double curve \(2C\)
	is the \(k\)-algebra
	\[
		R\coloneqq k[x_0,\dots,x_d]/(x_0,\dots,x_d)^2.
	\]
\end{lemma}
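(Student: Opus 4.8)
The plan is to compute $H^0(Z,\mathcal{O}_{2C})$ first as a $k$-vector space and then as a $k$-algebra, and to match the outcome with $R$. The starting point is the standard filtration of the structure sheaf of the double curve. Writing $\mathcal{I}_C=\mathcal{O}_Z(-C)$, so that $\mathcal{I}_C^2=\mathcal{O}_Z(-2C)$, the inclusions $\mathcal{O}_Z\supseteq\mathcal{I}_C\supseteq\mathcal{I}_C^2$ yield the short exact sequence
\[
	0\longrightarrow \mathcal{I}_C/\mathcal{I}_C^2 \longrightarrow \mathcal{O}_{2C}\longrightarrow \mathcal{O}_C\longrightarrow 0.
\]
First I would identify the left-hand term with the conormal bundle $\mathcal{N}^\vee_{C/Z}=\mathcal{O}_C(-C)$. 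Since $C\cong\mathbb{P}^1$ and $C^2=-d$, this line bundle has degree $-C^2=d$, hence $\mathcal{O}_C(-C)\cong\mathcal{O}_{\mathbb{P}^1}(d)$.

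Next I would take global sections and use the rationality of $C$. The associated long exact sequence reads
\[
	0\to H^0(\mathcal{O}_{\mathbb{P}^1}(d))\to H^0(\mathcal{O}_{2C})\to H^0(\mathcal{O}_{\mathbb{P}^1})\to H^1(\mathcal{O}_{\mathbb{P}^1}(d)).
\]
Because $d>0$ we have $H^1(\mathcal{O}_{\mathbb{P}^1}(d))=0$, while $h^0(\mathcal{O}_{\mathbb{P}^1}(d))=d+1$ and $h^0(\mathcal{O}_{\mathbb{P}^1})=1$. This produces a short exact sequence of $k$-vector spaces $0\to k^{d+1}\to H^0(\mathcal{O}_{2C})\to k\to 0$, so $\dim_k H^0(\mathcal{O}_{2C})=d+2$, which already matches $\dim_k R=d+2$ (a $k$-basis of $R$ being $1,x_0,\dots,x_d$).

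To pin down the ring structure, let $I\subseteq H^0(\mathcal{O}_{2C})$ be the image of $H^0(\mathcal{O}_{\mathbb{P}^1}(d))$, equivalently the kernel of the restriction $H^0(\mathcal{O}_{2C})\to H^0(\mathcal{O}_C)=k$. As the kernel of a $k$-algebra surjection onto $k$, the subspace $I$ is an ideal with $H^0(\mathcal{O}_{2C})/I\cong k$, and the constant section $1$ splits the quotient, giving $H^0(\mathcal{O}_{2C})=k\cdot 1\oplus I$ with $\dim_k I=d+1$. The crucial point is that $I^2=0$; granting this, a choice of $k$-basis $x_0,\dots,x_d$ of $I$ defines a $k$-algebra map $k[x_0,\dots,x_d]\to H^0(\mathcal{O}_{2C})$ that kills $(x_0,\dots,x_d)^2$ and is surjective (since $1$ and the $x_i$ span the target). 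The induced map $R\to H^0(\mathcal{O}_{2C})$ is then a surjection of $k$-algebras of equal dimension $d+2$, hence an isomorphism.

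The computation is essentially routine once the conormal bundle has been identified; the only step demanding care is verifying $I^2=0$, i.e.\ that the multiplication on $H^0(\mathcal{O}_{2C})$ is the square-zero one, and I expect this to be the main (if mild) obstacle. I would justify it locally: choosing coordinates in which $C=\{t=0\}$, an element of $I$ is represented by a function of the form $t\cdot g$, and the product of two such lies in $(t^2)$, hence vanishes in $\mathcal{O}_{2C}=\mathcal{O}_Z/(t^2)$; since $I$ is generated by such classes, $I^2=0$ follows.
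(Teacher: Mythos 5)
Your proof is correct, and it diverges from the paper's at the one step that matters. Both arguments start from the same conormal exact sequence $0\to\mathcal{I}_C/\mathcal{I}_C^2\to\mathcal{O}_{2C}\to\mathcal{O}_C\to 0$ with $\mathcal{I}_C/\mathcal{I}_C^2\simeq\mathcal{O}_{\mathbb{P}^1}(d)$, but they handle the ring structure differently. The paper upgrades the sequence to a splitting of \emph{sheaves of algebras}: it views $2C$ as an infinitesimal extension of $C$ by $\mathcal{O}_C(d)$, invokes the vanishing of the obstruction group $H^1(C,\SHom(\Omega_C,\mathcal{O}_C(d)))=H^1(\mathbb{P}^1,\mathcal{O}(d+2))$ to conclude the extension is trivial, and only then takes global sections of $\mathcal{O}_C\oplus\mathcal{O}_C(d)$ with its square-zero multiplication. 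You bypass the sheaf-level splitting entirely and work only with $H^0$: the ideal $I=H^0(\mathcal{I}_C/\mathcal{I}_C^2)\subseteq H^0(\mathcal{O}_{2C})$ satisfies $I^2=0$ for the elementary reason that $\mathcal{I}_C\cdot\mathcal{I}_C\subseteq\mathcal{I}_C^2$ (your local computation with $t$ is exactly right, $C$ being Cartier), and the unit section splits off the residual $k$. This is more elementary --- no deformation theory --- and it shows slightly more, namely that the \emph{ring of global sections} would still be the square-zero algebra $k\oplus I$ even if the sheaf extension failed to split; the $H^1$ vanishing you quote is only used to see that the restriction $H^0(\mathcal{O}_{2C})\to H^0(\mathcal{O}_C)$ is onto, which the constant section $1$ already gives you for free. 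The trade-off is that the paper's sheaf-level isomorphism $\mathcal{O}_{2C}\simeq\mathcal{O}_C\oplus\mathcal{O}_C(d)$ is reused implicitly in later arguments about $\mathcal{O}_{2C}$-modules, whereas your argument only identifies the global sections; for the lemma as stated, yours suffices.
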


\begin{proof}
	Because \(C\simeq\mathbb{P}^1\) and \(C^2=-d\),
	there is a short exact sequence \cite[p.62, (4)]{barth2004compact}
	\[
		0\longrightarrow \mathcal{O}_{C}(d)\longrightarrow \mathcal{O}_{2C}\longrightarrow \mathcal{O}_C\longrightarrow 0.
	\]
	We may view \(2C\) as an infinitesimal extension of \(C\),
	which is trivial since
	\[
		H^1(C,\SHom(\Omega_C,\mathcal{O}_C(d)))=0.
	\]
	Hence \(\mathcal{O}_{2C}\simeq \mathcal{O}_C\oplus\mathcal{O}_C(d)\) as sheaves of \(\mathcal{O}_C\)-algebras,
	with multiplication given by
	\[
		(a_1\oplus s_1)\cdot (a_2\oplus s_2)=(a_1a_2\oplus (a_1s_2+a_2s_1))
	\]
	\cite[III, Exercise~4.10]{hartshorne1977algebraic}.
	Taking global sections yields
	\[
		H^{0}(Z,\mathcal{O}_{2C})\simeq H^{0}(C,\mathcal{O}_{C})\oplus H^{0}(C,\mathcal{O}_{C}(d))
		\simeq k[x_0,\dots,x_d]/(x_0,\dots,x_d)^2.
	\]
	This identifies the desired algebra \(R\).
\end{proof}

\begin{lemma}\label{lem:quotient_to_o2c}
	All the surjective morphisms
	\[
		\mathcal{O}_{Z}^2\to \mathcal{O}_{2C}\to 0
	\]
	are parametrised by the total space of the vector bundle \(\mathcal{O}_{\mathbb{P}^1}(-2)^{\oplus (d+1)}\).
	Equivalently, this \(\mathbb{A}^{d+1}\)-bundle over \(\mathbb{P}^1\) is the smooth locus of
	\(V(xz+y^2)\subseteq \mathbb{P}^{d+3}\).
\end{lemma}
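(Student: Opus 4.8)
The plan is to view a surjection \(\mathcal{O}_Z^2\twoheadrightarrow\mathcal{O}_{2C}\) as a pair of global sections and then pass to the quotient by automorphisms of the target, which is precisely the set of quotients one must parametrise. By \zcref{lem:global_sections_O2C}, \(R\coloneqq H^0(Z,\mathcal{O}_{2C})=k\oplus V\) with \(V=H^0(C,\mathcal{O}_C(d))\cong k^{d+1}\) and \(V^2=0\). A morphism \(\mathcal{O}_Z^2\to\mathcal{O}_{2C}\) is then a pair \((f,g)\in R^2\), and I write \(f=a+\sigma\), \(g=b+\tau\) with \(a,b\in k\) and \(\sigma,\tau\in V\).

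First I would determine when \((f,g)\) is surjective. Since \(\mathcal{O}_{2C}\) is locally cyclic — near a point \(p\in C\) one has \(\mathcal{O}_{2C,p}=\mathcal{O}_{Z,p}/(t^2)\) for a local equation \(t\) of \(C\) — the fibre \(\mathcal{O}_{2C,p}\otimes k(p)\) is one-dimensional and the reduction of \((f,g)\) is the map \(k(p)^2\to k(p)\), \((u,v)\mapsto ua+vb\), the values being the \(\mathcal{O}_C\)-parts, i.e.\ the constants \(a,b\). By Nakayama, \((f,g)\) is surjective at every point of \(C\) if and only if \((a,b)\neq(0,0)\); in particular surjectivity does not involve \(\sigma,\tau\), and \((f,g)\mapsto[a:b]\) produces the base \(\mathbb{P}^1\).

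Next I would carry out the quotient. A point of the Quot-scheme is a surjection modulo post-composition by \(\Aut(\mathcal{O}_{2C})\); as \(\mathcal{O}_{2C}=\mathcal{O}_Z/\mathcal{O}_Z(-2C)\) is cyclic, \(\End_{\mathcal{O}_Z}(\mathcal{O}_{2C})=R\) (multiplication by sections) and \(\Aut(\mathcal{O}_{2C})=R^\times=\{c+w:c\in k^\times,\ w\in V\}\), which acts freely on surjections. On the chart \(\{a\neq0\}\) the unique \(u\in R^\times\) with \(uf=1\) carries \(g\) to \(b/a+(a\tau-b\sigma)/a^2\), yielding coordinates \((s,\eta_0)=(b/a,\ (a\tau-b\sigma)/a^2)\in\mathbb{A}^1\times V\); symmetrically on \(\{b\neq0\}\) one gets \((s',\eta_1)=(a/b,\ (b\sigma-a\tau)/b^2)\). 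Computing the transition \((s',\eta_1)=(1/s,\,-(s')^2\eta_0)\) exhibits the quotient as the asserted \(\mathbb{A}^{d+1}\)-bundle over \(\mathbb{P}^1\).

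Finally, for the ``equivalently'' clause I would use the scaling-equivariant assignment \([(f,g)]\mapsto[a^2:ab:b^2:\eta]\) with \(\eta=a\tau-b\sigma\in V\): each coordinate is homogeneous of weight \(2\) under \((a,b)\mapsto c(a,b)\) (since \(\eta\mapsto c^2\eta\)), so this is a well-defined morphism to \(\mathbb{P}^{d+3}\) whose image lies on the rank-three quadric \(V(xz-y^2)\), equal to \(V(xz+y^2)\) after a linear change of coordinates. This quadric is the cone over the conic \(\{[a^2:ab:b^2]\}\cong\mathbb{P}^1\) with vertex the linear space \(\mathbb{P}^d=\{x=y=z=0\}\), whose singular locus is exactly that vertex; I would then check the morphism is an isomorphism onto the complement of the vertex — recovering \([a:b]\) from the conic point and the fibre coordinate as \(\eta/a^2\) — hence onto the smooth locus, simultaneously identifying the \(\mathbb{A}^{d+1}\)-bundle above with the smooth locus of \(V(xz+y^2)\). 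The routine inputs are the identification of \(\Aut(\mathcal{O}_{2C})\) and the chart normalisations; the main point requiring care is the bookkeeping of the weight-\(2\) scaling through the transition functions and the Veronese/cone picture, which is what matches the line-bundle model with the quadric model. The surjectivity criterion, though elementary, is the conceptual crux, as it forces the base \(\mathbb{P}^1\) and shows the \(\sigma,\tau\) to be free fibre directions.
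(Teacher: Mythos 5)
Your proof is correct and follows essentially the same route as the paper's: both reduce to pairs $(f,g)\in R^2$ with $R=H^0(\mathcal{O}_{2C})=k\oplus V$, use invertibility of one component (equivalently $(a,b)\neq(0,0)$) to normalise and extract affine coordinates $(b/a,(a\tau-b\sigma)/a^2)$, and identify the transition function with that of $\mathcal{O}_{\mathbb{P}^1}(-2)^{\oplus(d+1)}$ and with the charts of the quadric cone. Your Nakayama argument for the surjectivity criterion and the explicit weight-$2$ map $[a^2:ab:b^2:\eta]$ onto the smooth locus of the quadric are slightly more carefully packaged than the paper's chart-by-chart comparison, but the content is the same.
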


\begin{proof}
	Since the support of \(\mathcal{O}_{2C}\) is \(2C\),
	any surjection \(\mathcal{O}_{Z}^2\to\mathcal{O}_{2C}\to 0\) factors through
	\[
		\mathcal{O}_{2C}^2\longrightarrow \mathcal{O}_{2C}\longrightarrow 0.
	\]
	Thus we may work with surjections from \(\mathcal{O}_{2C}^2\).

	A quotient \(\mathcal{O}_{2C}^2\to\mathcal{O}_{2C}\) is given by a pair \((e,h)\in\mathcal{O}_{2C}^2\)
	whose components generate \(\mathcal{O}_{2C}\) as an \(\mathcal{O}_{2C}\)-module;
	equivalently, at least one of \(e,h\) is invertible.
	The kernel is the submodule generated by \((-h,e)\).

	If \(e\) is invertible, then
	\[
		\langle(-h,e)\rangle = \langle(-e^{-1}h,1)\rangle
	\]
	so the data of the surjection is encoded by the element
	\begin{equation}\label{eq:generator_1}
		e^{-1}h=a_1+\sum_{i=0}^d b_ix_i \in R,
	\end{equation}
	which is uniquely determined by the coordinates	\((a_1,b_0,\dots,b_d)\in\mathbb{A}^1\times\mathbb{A}^{d+1}\).
	Similarly, if \(h\) is invertible,
	we obtain coordinates \((a_2,c_0,\dots,c_d)\) via
	\[
		-h^{-1}e=a_2+\sum_{i=0}^d c_ix_i \in R.
	\]
	When both \(e\) and \(h\) are invertible, the two descriptions are related by
	\[
		(a_{1},b_{0},\dots,b_{d})\longmapsto
		\Bigl(\frac{1}{a_{1}},-\frac{b_{0}}{a_{1}^{2}},\dots,-\frac{b_{d}}{a_{1}^{2}}\Bigr),
	\]
	which is precisely the transition function of \(\mathcal{O}_{\mathbb{P}^1}(-2)^{\oplus(d+1)}\)
	on the standard cover of \(\mathbb{P}^1\).
	Hence the moduli space of surjections is the total space of this vector bundle.

	To identify this space with the smooth locus of \(V(xz+y^{2})\subseteq\mathbb{P}^{d+3}\),
	let \((x:y:z:u_{0}:\dots:u_{d})\) be homogeneous coordinates on \(\mathbb{P}^{d+3}\).
	On the chart \(x\neq0\) we set \(x=1\) and obtain coordinates
	\((y,u_{0},\dots,u_{d})\in\mathbb{A}^1\times\mathbb{A}^{d+1}\).
	On the chart \(z\neq0\) we set \(z=1\) and obtain coordinates
	\((-y,u_{0},\dots,u_{d})\in\mathbb{A}^1\times\mathbb{A}^{d+1}\).
	The intersection of these charts is given by \(x\neq 0\), \(z\neq 0\), \(y\neq 0\),
	and the transition between the two coordinate systems is
	\[
		(y,u_{0},\dots,u_{d})\longmapsto
		\Bigl(\frac{1}{y},-\frac{u_{0}}{y^{2}},\dots,-\frac{u_{d}}{y^{2}}\Bigr),
	\]
	again the transition function of \(\mathcal{O}_{\mathbb{P}^1}(-2)^{\oplus(d+1)}\).
	The singular locus of \(V(xz+y^{2})\) is the linear subspace \(\{x=y=z=0\}\cong\mathbb{P}^d\).
	Thus the smooth locus of \(V(xz+y^{2})\) is precisely the total space of
	\(\mathcal{O}_{\mathbb{P}^1}(-2)^{\oplus(d+1)}\).
\end{proof}

\begin{remark}[Geometric interpretation]\label{rmk:geometric_inter_2C}
	For sheaf stable pairs that are locally free,
	we can use the geometric treatment in \zcref{eq:geometric_interpretation}
	to describe concretely the fibres of the Hilbert-Chow morphism
	\[
		\sigma\colon M_Z(2,2C,-2d)\longrightarrow \CDiv(Z), \quad [\mathcal{E},s]\longmapsto Q
	\]
	where \(Q\) is the Fitting support the cokernel sheaf \(\mathcal{Q}\coloneqq\coker(s)\).
	In the present situation \(Q=2C\);
	thus \(Q\) consists of a single curve \(C\) with multiplicity two and \((Z,Q_{\red})\) is log smooth.
	By \zcref{thm:existence_stable_minimal_model}, all these sheaf stable pairs share
	the same associated stable minimal model
	\begin{equation}\label{eq:stable_minimal_model_2C}
		(X',B'),A' \xlongrightarrow{f'} Z,
	\end{equation}
	with \(X'\simeq Z\times\mathbb{P}^1\).
	It follows that both the maps \(\pi\) and \(\theta\) in the diagram \eqref{eq:geometric_interpretation}
	are identities.
	We may perform Maruyama's elementary transformation twice on the model \eqref{eq:stable_minimal_model_2C}.

	Pick a point \(p\in\mathbb{P}^1\) and take the fibre \(X'_p\) of the projection \(X'\to\mathbb{P}^1\) over \(p\).
	Denote by \(C'\) the intersection of \(X'_C=C\times\mathbb{P}^1\) and \(X'_p\).
	Then \(C'\) has self-intersection \(0\) in \(X'_C\).
	Blow up \(X'\) along \(C'\) and let \(C_1\) be the intersection of the exceptional divisor \(E\)
	with the strict transform of \(X'_C\).
	Then blow down the strict transform of \(X'_C\) to the curve \(C_1\);
	we continue to denote the strict transform of \(E\) by \(E\).
	Since \(C\) has self-intersection \(-d\) in \(Z\),
	by \zcref{prop:self_intersection_blowup_curve} the self-intersection of \(C_1\) in \(E\) equals
	\((C_1)^2_E-d-0=-d<0\),
	so \(E\) is a Hirzebruch surface of degree \(d\) by \zcref{cor:ruled_surface_inv}.
	For the second Maruyama transformation we choose a curve \(C''\subseteq E\) disjoint from \(C_1\).
	All the choices are parametrized by \(\mathbb{A}^{d+1}\) by \zcref{lem:parameter_space_cross_sections}.
	Hence the fibre of \(\sigma\) over \(Q=2C\) is an \(\mathbb{A}^{d+1}\)-fibration over \(\mathbb{P}^1\),
	in agreement with the description obtained from the algebraic method in \zcref{lem:quotient_to_o2c}.
\end{remark}

\subsection{Study \(M_f'\)}

Since \(M_f'\simeq\Theta_{d}\) (\zcref{thm:M_f_theta_p}),
any sheaf stable pair \([\mathcal{E},s]\in M_Z(2,2C,-2d)\) with \(\mathcal{E}\) not locally free
is determined by a quotient
\[
	[\mathcal{O}_{Z}^2\longrightarrow \mathcal{L}]\in\Quot(\mathcal{O}_{Z}^2,0,2C,2d)
\]
for which the maximal torsion subsheaf \(\mathcal{T}\subseteq\mathcal{L}\) has length \(d\) and
is supported on the curve \(C\).
Because \(\mathcal{L}\) is supported on \(2C\), the quotient factors through \(\mathcal{O}_{2C}^2\):
\[
	\mathcal{O}_{Z}^2\longrightarrow \mathcal{O}_{2C}^2\longrightarrow \mathcal{L}.
\]
Set \(R=H^{0}(Z,\mathcal{O}_{2C})\simeq k[x_{0}, \dots, x_{d}]\) (cf.~\zcref{lem:global_sections_O2C})
and \(L=H^{0}(Z,\mathcal{L})\).
Then a quotient \(\mathcal{O}_{2C}\to\mathcal{L}\) is uniquely determined by a surjection of \(R\)-modules
\[
	R^2\longrightarrow L.
\]
However, not every such surjection arises from a point of \(\Theta_d\);
one must impose additional conditions ensuring that the maximal torsion subsheaf \(\mathcal{T}\subseteq\mathcal{L}\)
has length \(d\), and that the quotient \(\mathcal{F}\coloneqq \mathcal{L}/\mathcal{T}\)
is isomorphic to \(\mathcal{O}_{C}^2\).

\smallskip

The surface singularity viewpoint clarifies the structure of \(R\).
The curve \(C\subseteq Z\) is a rational curve with self-intersection \(C^2=-d\) on the smooth projective
surface \(Z\).
Locally, it can always be realised as the exceptional divisor of the blow up of the vertex
the projective cone over \(\mathbb{P}^1\subset\mathbb{P}^d\) of degree \(d\)
(cf.~\cite[Chapter V, Example 2.11.4]{hartshorne1977algebraic}).
Let \(\phi\colon Z\to Y\) contract \(C\) to the point \(P\in Y\), which is singular when \(d\geq 2\).
The inverse image of the maximal ideal \(\mathfrak{m}_P\subseteq \mathcal{O}_{Y,P}\)
satisfies \(\phi^{-1}\mathfrak{m}_P\cdot\mathcal{O}_{Z}=\mathcal{I}_C\).
Hence
\[
	\mathcal{O}_{2C}= \mathcal{O}_Z/\mathcal{I}^2_C=
	\phi^*\mathcal{O}_Y/(\phi^{-1}\mathfrak{m}_P\cdot\mathcal{O}_Z)^2=\phi^*(\mathcal{O}_{Y}/\mathfrak{m}_P^2).
\]
Taking global sections, and using
\[
	\mathcal{O}_{Y}/\mathfrak{m}_P^2=k[x_0,\dots,x_d]/(x_0,\dots,x_d)^2,
\]
we obtain
\[
	H^{0}(Z,\mathcal{O}_{2C})\simeq H^{0}(Y,\mathcal{O}_Y/\mathfrak{m}_P^2)\simeq k[x_0,\dots,x_d]/(x_0,\dots,x_d)^2.
\]
Moreover, \(\mathcal{L}=\phi^{*}\phi_*\mathcal{L}\) for any quotient \(\mathcal{O}_{Z}^2\to\mathcal{L}\) in \(\Theta\).

\begin{lemma}\label{lem:limiting_case}
	Assume
	\[
		L=k\oplus k[x_0,\dots,x_d]/(x_0,\dots,x_d)^2+(a_0x_0+\cdots+a_dx_d)
	\]
	for some \([a_0:\cdots:a_d]\in\mathbb{P}^d\).
	Then any quotient \(R^2\to L\) of \(R\)-modules corresponds to a point of \(\Theta_d\).
\end{lemma}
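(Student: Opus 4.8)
The plan is to translate the module‑theoretic hypothesis into a statement about the associated sheaf and then simply read off its torsion. Using the contraction \(\phi\colon Z\to Y\) discussed above, the \(R\)‑module \(L\) determines the sheaf \(\mathcal{L}=\phi^{*}L\); since \(\phi^{*}\) is right exact and \(\mathcal{O}_Z^2\twoheadrightarrow\mathcal{O}_{2C}^2\), any \(R\)‑module surjection \(R^2\to L\) pulls back to a sheaf surjection \(\mathcal{O}_Z^2\twoheadrightarrow\mathcal{L}\), i.e.\ a genuine point of \(\Quot(\mathcal{O}_Z^2,0,2C,2d)\). The key observation is that membership in \(\Theta_d\) constrains only the quotient sheaf \(\mathcal{L}\), which is fixed once \(L\) is fixed; so proving the lemma amounts to checking that this single sheaf lies in \(\Theta_d\), and the phrase ``any surjection'' then comes for free. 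First I would record the decomposition \(L=k\oplus R/(\ell)\) with \(\ell=\sum_i a_i x_i\) and apply \(\phi^{*}\) to obtain \(\mathcal{L}=\mathcal{O}_C\oplus\mathcal{M}\), where \(\mathcal{M}\coloneqq\phi^{*}\!\bigl(R/(\ell)\bigr)\) and I have used \(\phi^{*}(\mathcal{O}_Y/\mathfrak m_P)=\mathcal{O}_C\) and \(\phi^{*}(\mathcal{O}_Y/\mathfrak m_P^2)=\mathcal{O}_{2C}\), established above.

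Next I would compute \(\mathcal{M}\) explicitly. In \(R\) the annihilator of the generator is \((\ell)=k\ell\cong R/\mathfrak m\), so there is a presentation \(0\to k\xrightarrow{\cdot\ell}R\to R/(\ell)\to0\); applying the right exact functor \(\phi^{*}\) gives \(\mathcal{M}=\coker\!\bigl(\mathcal{O}_C\xrightarrow{\bar\ell}\mathcal{O}_{2C}\bigr)\), where \(\bar\ell\) is the image of \(\ell\). Since \(\ell\in\mathfrak m\) lands in the ideal subsheaf \(\mathcal{O}_C(d)\subseteq\mathcal{O}_{2C}\) of the conormal sequence \(0\to\mathcal{O}_C(d)\to\mathcal{O}_{2C}\to\mathcal{O}_C\to0\) of \zcref{lem:global_sections_O2C}, and because \(\bar\ell\) is a \emph{nonzero} section of \(\mathcal{O}_C(d)=\mathcal{O}_{\mathbb P^1}(d)\) (nonzero precisely because \([a_0:\dots:a_d]\in\mathbb P^d\)), the map \(\mathcal{O}_C\xrightarrow{\bar\ell}\mathcal{O}_C(d)\) is injective with cokernel the length‑\(d\) skyscraper \(\mathcal{O}_{D_\ell}\), where \(D_\ell\) is the degree‑\(d\) zero divisor of \(\bar\ell\) on \(C\). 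Passing up to \(\mathcal{O}_{2C}\) then yields the extension
\[
0\longrightarrow \mathcal{O}_{D_\ell}\longrightarrow \mathcal{M}\longrightarrow \mathcal{O}_C\longrightarrow 0 .
\]

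Finally I would read off the invariants. The displayed sequence shows that the maximal zero‑dimensional subsheaf of \(\mathcal{M}\) is exactly \(\mathcal{O}_{D_\ell}\) (length \(d\), supported on \(C\)) and that \(\mathcal{M}/\mathcal{O}_{D_\ell}\simeq\mathcal{O}_C\) is pure; hence \(\mathcal{L}=\mathcal{O}_C\oplus\mathcal{M}\) has maximal torsion \(\mathcal{T}=\mathcal{O}_{D_\ell}\) with \(\Supp\mathcal{T}\subseteq C\) and pure quotient \(\mathcal{F}=\mathcal{L}/\mathcal{T}\simeq\mathcal{O}_C^{2}\). A Chern‑character count using \(\ch(\mathcal{O}_C)=(0,C,d/2)\) and \(\ch(\mathcal{O}_{D_\ell})=(0,0,d)\) confirms \(\ch(\mathcal{L})=(0,2C,2d)\), so every surjection \(R^2\to L\) defines a valid quotient with \(\mathcal{T}\neq0\) supported on \(C\); that is, it lands in \(\Theta_d\). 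The main obstacle is the middle step: one must check that \(\phi^{*}\) correctly converts the rank‑one annihilator \((\ell)\cong k\) into the section \(\bar\ell\) and that the induced torsion has length \emph{exactly} \(d\) and no more, i.e.\ that the splitting \(\mathcal{L}=\mathcal{O}_C\oplus\mathcal{M}\) conceals no extra embedded points. The decisive input here is the purity of the quotient \(\mathcal{O}_C\) in the extension above, which pins down \(\mathcal{O}_{D_\ell}\) as the full torsion subsheaf.
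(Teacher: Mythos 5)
Your proposal is correct and takes essentially the same route as the paper: both arguments pull \(L\) back along the contraction \(\phi\colon Z\to Y\), use right exactness of \(\phi^{*}\) to turn any module surjection \(R^2\to L\) into a sheaf surjection, and then identify the maximal torsion subsheaf of \(\mathcal{L}=\phi^{*}L\) as a length-\(d\) sheaf supported on \(C\) with pure quotient \(\mathcal{O}_C^2\). The only difference is that you make explicit, via the presentation \(0\to k\xrightarrow{\cdot\ell}R\to R/(\ell)\to 0\) and the conormal sequence \(0\to\mathcal{O}_C(d)\to\mathcal{O}_{2C}\to\mathcal{O}_C\to 0\), the identification of \(\phi^{*}(R/(\ell))\) that the paper simply asserts as clear.
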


\begin{proof}
	We regard \(L\) as a skyscraper sheaf supported at \(P\in Y\).
	Then \(\mathcal{L}=\phi^{*}L=\mathcal{O}_{C}\oplus\mathcal{O}_{C,p_1,\dots,p_d}\),
	where \(O_{C,p_{1}, \dots, p_{d}}\) denotes the structure sheaf of the curve \(C\) endowed with
	(non-necessarily distinct) non-reduced points \(p_{1}, \dots, p_{d}\).

	It is clear that the maximal torsion subsheaf \(\mathcal{T}\) of \(\mathcal{L}\) is isomorphic
	to the structure sheaf of these \(d\) points and \(\mathcal{L}/\mathcal{T}\simeq\mathcal{O}_{C}^2\).
	Since \(\phi^{*}R=\mathcal{O}_{C}\) and \(\phi^{*}\) is right exact,
	any quotient \(R^2\to L\) induces a quotient
	\[
		\mathcal{O}_{Z}^2\longrightarrow \mathcal{O}_{C}^{2}\longrightarrow \mathcal{L},
	\]
	which determines a point of \(\Theta_d\).
\end{proof}

By \zcref{lem:limiting_case}, it is natural to single out the following locus in \(\Theta_d\).

\begin{definition}\label{def:limiting_case}
	We define \(\Gamma\subseteq\Theta_d\) to be the locus parametrising all quotients
	\(\mathcal{O}_{2C}^2\to \mathcal{L}\) such that the space of global sections of \(\mathcal{L}\) is of the form
	\[
		k\oplus k[x_0,\dots,x_d]/(x_0,\dots,x_d)^2+(a_0x_0+\cdots+a_dx_d).
	\]
	In other words, \(\Gamma\) parametrises all surjections
	\[
		\big(k[x_0,\dots,x_d]/(x_0,\dots,x_d)^2\big)^{\oplus2}\longrightarrow k\oplus
		k[x_0,\dots,x_d]/(x_0,\dots,x_d)^2+(a_0x_0+\cdots+a_dx_d)
	\]
	for some \((a_0:a_1:\dots:a_d)\in\mathbb{P}^d\).
\end{definition}

The structure of \(\Gamma\) is easy to describe.

\begin{lemma}\label{lem:structure_of_gamma}
	For any fixed point \((a_0:\dots:a_d)\in\mathbb{P}^d\), the surjections
	\[
		\pi\colon\big(k[x_0,\dots,x_d]/(x_0,\dots,x_d)^2\big)^{\oplus2}\longrightarrow k\oplus
		k[x_0,\dots,x_d]/(x_0,\dots,x_d)^2+(a_0x_0+\cdots+a_dx_d).
	\]
	are parametrised by \(\mathbb{P}^1\).
	Moreover, \(\Gamma\) is isomorphic to \(\mathbb{P}^1\times\mathbb{P}^d\).
\end{lemma}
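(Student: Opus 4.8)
The plan is to fix $a=(a_0:\cdots:a_d)\in\mathbb P^d$, put $\ell_a=\sum_i a_ix_i$ and
\[
	L_a \coloneqq k\oplus R/\langle\ell_a\rangle,\qquad R=k[x_0,\dots,x_d]/\mathfrak m^2,\ \ \mathfrak m=(x_0,\dots,x_d),
\]
and to recall that a point of $\Gamma$ lying over $a$ is a surjection $q\colon R^{\oplus2}\to L_a$ of $R$-modules \emph{taken up to post-composition with} $\Aut_R(L_a)$: two surjections define the same point of the Quot-scheme, hence of $\Gamma\subseteq\Theta_d$, exactly when their kernels coincide (here I use \zcref{lem:global_sections_O2C}, \zcref{def:limiting_case} and that $L_a$ recovers $a$ via $\Ann_R(L_a)=\langle\ell_a\rangle$). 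So the first task is to show $\mathrm{Surj}_R(R^{\oplus2},L_a)/\Aut_R(L_a)\cong\mathbb P^1$, and the second is to let $a$ vary and identify the total space with $\mathbb P^1\times\mathbb P^d$.

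Since $\mathfrak m^2=0$, the fibre computation is pure linear algebra. First I would record the structural facts: writing $L_a=ku\oplus (R/\langle\ell_a\rangle)v$ one has $\mathfrak mL_a=(\mathfrak m/\langle\ell_a\rangle)v$ of dimension $d$, so $L_a/\mathfrak mL_a\cong k^2$ and $L_a$ is minimally $2$-generated; moreover $\ell_aL_a=0$, and the socle $L_a[\mathfrak m]=ku\oplus(\mathfrak m/\langle\ell_a\rangle)v$ maps onto a \emph{canonical line} $k\bar u\subseteq L_a/\mathfrak mL_a$. Writing $q(e_i)=w_i=\alpha_iu+(c_i+\rho_i)v$ with $\alpha_i,c_i\in k$ and $\rho_i\in\mathfrak m/\langle\ell_a\rangle$, surjectivity is equivalent to $\det\!\left(\begin{smallmatrix}\alpha_1&\alpha_2\\ c_1&c_2\end{smallmatrix}\right)\neq0$. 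Next I would compute $\Aut_R(L_a)$: an endomorphism may send $v$ anywhere (as $\ell_aL_a=0$) but must send $u$ into the socle, and invertibility is detected on $L_a/\mathfrak mL_a$; the induced action on the leading matrix $M=\left(\begin{smallmatrix}\alpha_1&\alpha_2\\ c_1&c_2\end{smallmatrix}\right)$ is left multiplication by the Borel $B\subset\GL_2$ stabilising $k\bar u$, so $(c_1,c_2)\mapsto t(c_1,c_2)$. The unipotent part of $\Aut_R(L_a)$ acts on $(\rho_1,\rho_2)$ through the invertible matrix $M^{\mathsf T}$, so after a unique such automorphism one may arrange $\rho_1=\rho_2=0$, and the residual subgroup preserving $\{\rho_i=0\}$ is exactly $B$ acting on $M$. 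Hence the fibre is $B\backslash\{M\in\GL_2\}\cong\mathbb P^1$, with coordinate $[c_1:c_2]$. (The stabiliser of any surjection is trivial, since an endomorphism fixing a generating set is the identity, so the action is free and the quotient is a clean $\mathbb P^1$.)

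For the global statement I would observe that the $\mathbb P^1$-coordinate is intrinsic and \emph{independent of $a$}: it is the line $\bar q^{-1}(k\bar u)\subseteq\mathbb P\bigl(R^{\oplus2}/\mathfrak mR^{\oplus2}\bigr)\cong\mathbb P^1$, where $k\bar u$ is the canonical socle-line and $\bar q$ is the induced isomorphism $R^{\oplus2}/\mathfrak mR^{\oplus2}\xrightarrow{\sim}L_a/\mathfrak mL_a$. This yields a morphism $\Phi\colon\Gamma\to\mathbb P^1\times\mathbb P^d$, $q\mapsto([c_1:c_2],a)$, bijective on points by the fibrewise computation. To upgrade it to an isomorphism I would write down the inverse as a universal family over $\mathbb P^1\times\mathbb P^d$: the modules $L_a$ fit into a flat family over $\mathbb P^d$ (cut out by the universal form $\ell_a$), and over the two standard charts of $\mathbb P^1$ I would use the explicit normal forms $w_1=u+c_1v,\ w_2=v$ (on $c_2\neq0$) and $w_1=v,\ w_2=u+c_2v$ (on $c_1\neq0$). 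On the overlap these two surjections differ by the element $\left(\begin{smallmatrix}-c_1&1\\0&1/c_1\end{smallmatrix}\right)\in B$, hence define the \emph{same} point of $\Gamma$; so the local families glue to a morphism $\Psi\colon\mathbb P^1\times\mathbb P^d\to\Gamma$ inverse to $\Phi$, giving $\Gamma\cong\mathbb P^1\times\mathbb P^d$.

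I expect the main obstacle to be the determination of $\Aut_R(L_a)$ and the verification that it acts on the leading $2\times2$ data only through the Borel $B$; this is precisely what produces $\mathbb P^1$ rather than a point or a larger quotient, and it must be combined with the bookkeeping showing that the unipotent part of $\Aut_R(L_a)$ exactly absorbs the higher-order coordinates $\rho_i$. The globalisation is then essentially formal once one notices that the $\mathbb P^1$-coordinate lives in a fixed projective line independent of $a$; the only point to check with care there is that the two chart-wise normal forms are $B$-equivalent on the overlap, which is what splits the $\mathbb P^1$-factor off globally and trivialises the bundle.
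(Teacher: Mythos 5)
Your proof is correct, and at bottom it performs the same Artinian linear algebra as the paper, organised dually and carried somewhat further. The paper fixes the surjection $\pi$, writes $\pi((1,0))=(u',u+\sum u_ix_i)$ and $\pi((0,1))=(v',v+\sum v_ix_i)$, computes $\ker\pi$ explicitly (spanned by $(\sum a_ix_i,0)$, $(0,\sum a_ix_i)$ and the $(vx_i,-ux_i)$), and concludes that the kernel, hence the Quot-point, remembers only $(u:v)\in\mathbb{P}^1$; in your notation $(u:v)=[c_1:c_2]$, and your homogeneous part $k(v,-u)$ of the kernel is exactly the invariant the paper extracts. You instead present the fibre as $\mathrm{Surj}_R(R^{\oplus2},L_a)/\Aut_R(L_a)$, determine $\Aut_R(L_a)$ (socle condition on $u$, no constraint on the image of $v$ beyond invertibility mod $\mathfrak{m}$), check that the unipotent part uniquely absorbs the $\rho_i$ while the residual Borel $B$ acts on the leading matrix, and read off $B\backslash\GL_2\simeq\mathbb{P}^1$. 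The two routes are equivalent, but yours buys two things the paper leaves implicit: first, injectivity of the parametrisation (the paper does not actually verify that distinct $(u:v)$ yield distinct kernels, whereas your free $B$-action makes this automatic); second, the scheme-theoretic content of $\Gamma\simeq\mathbb{P}^1\times\mathbb{P}^d$ --- the paper's final sentence is a pointwise assertion, while your identification of the $\mathbb{P}^1$-coordinate inside the fixed space $\mathbb{P}(R^{\oplus2}/\mathfrak{m}R^{\oplus2})$ (independent of $a$, which is what makes the family a product rather than merely a $\mathbb{P}^1$-bundle over $\mathbb{P}^d$) together with the inverse $\Psi$ built from the two chart-wise normal forms, glued by the explicit Borel element on the overlap, actually produces the isomorphism. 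Your transition computations match the paper's transition data, so there is no discrepancy to reconcile.
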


\begin{proof}
	Write
	\[
		\pi((1,0))=\Big(u',u+\sum_{i=0}^{d}u_ix_i\Big), \quad \pi((0,1))=\Big(v',v+\sum_{i=0}^{d}v_ix_i\Big)
	\]
	and let
	\[
		(f,g)=\Big(\alpha+\sum_{i=0}^{d}\alpha_ix_i,\beta+\sum_{i=0}^{d}\beta_ix_i\Big) \in \ker\pi.
	\]
	Since the homomorphism \(\pi\) is surjective, the matrix
	\[
		\begin{pmatrix}
			u' & u \\
			v' & v
		\end{pmatrix}
	\]
	has rank \(2\).
	By assumption,
	\begin{align*}
		\pi(f,g)&=f\pi((1,0))+g\pi((0,1))\\
		&=\Big(u'\alpha, \alpha u+\alpha\sum_{i=0}^{d}u_ix_i+\sum_{i=0}^{d}u\alpha_ix_i\Big)+
		\Big(v'\beta, \beta v+\beta\sum_{i=0}^{d}v_ix_i+\sum_{i=0}^{d}v\beta_ix_i\Big)=0.
	\end{align*}
	It follows that \(\alpha=\beta=0\).
	The kernel of \(\pi\) has \(k\)-dimension \(d+2\),
	and is generated as a \(k\)-vector space by \((\sum_{i=0}^{d}a_ix_i,0)\),
	\((0,\sum_{i=0}^{d}a_ix_i)\) and \((vx_i,-ux_i)\) for \(i=0,\cdots,d\),
	For fixed \((a_0:\cdots:a_d)\in\mathbb{P}^d\), the only remaining choice is the pair \((u:v)\in\mathbb{P}^1\).
	Hence all the quotients \(\pi\) with prescribed \((a_{0}:\dots:a_{d})\) are parametrized by \(\mathbb{P}^1\)
	and varying \((a_{0}:\dots:a_{d})\) we obtain an isomorphism \(\Gamma\simeq\mathbb{P}^1\times\mathbb{P}^d\).
\end{proof}

In fact, \(\Gamma\) is precisely the ``boundary'' of \(M_f\).

\begin{lemma}\label{lem:limit_of_gamma}
	Let \(\overline{M_f}\) denote the closure of \(M_f\) in \(M_Z(2,2C,-2d)\).
	Then \(\overline{M_f}\setminus M_f=\Gamma\).
\end{lemma}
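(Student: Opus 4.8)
The plan is to pass to the module-theoretic description supplied by \zcref{thm:M_f_theta_p}: a point of $M_Z(2,2C,-2d)\cong\Theta$ supported on $2C$ is the same as an $R$-submodule $\mathcal K\subseteq R^{2}$ of colength $d+2$, where $R=H^{0}(\mathcal O_{2C})=k[x_0,\dots,x_d]/\mathfrak m^{2}$ by \zcref{lem:global_sections_O2C}, via $\mathcal K=\ker(R^{2}\to L)$. Under this dictionary $M_f=\Theta_p$ is the open locus of unimodular $\mathcal K=R\,g$ (equivalently $R^{2}/\mathcal K\cong R$) by \zcref{lem:quotient_to_o2c}, and its complement $\Theta_d$ is closed, so $\overline{M_f}\setminus M_f\subseteq\Theta_d$. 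Since $\Theta$ is projective and $\mathcal K$ varies in the Grassmannian $\Gr(d+2,R^{2})$, on which the locus of $R$-stable subspaces is closed, I would reduce to computing flat limits $\mathcal K_0=\lim_{t\to0}\mathcal K(t)$ of one-parameter families $\mathcal K(t)=R\,g(t)$ of unimodular submodules; each such $\mathcal K_0$ is again an $R$-submodule of colength $d+2$.

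First I would prove $\overline{M_f}\setminus M_f\subseteq\Gamma$. Write $g=(\alpha+\sum_i p_ix_i)\mathbf e_1+(\beta+\sum_i q_ix_i)\mathbf e_2$ with $\alpha,\beta,p_i,q_i\in k[[t]]$. Because $\mathfrak m^{2}=0$, one has $x_j\,g(t)=x_j(\alpha\mathbf e_1+\beta\mathbf e_2)$, so $\mathcal K(t)$ contains the $(d+1)$-plane $\mathfrak m\cdot(\alpha\mathbf e_1+\beta\mathbf e_2)$, which converges as $t\to0$ to $\mathfrak m\,\mathbf f_2$, where $\mathbf f_2=u\mathbf e_1+v\mathbf e_2$ and $(u:v)=\lim(\alpha:\beta)\in\mathbb P^{1}$ (well defined by unimodularity). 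By closedness of the incidence $\{W\subseteq\mathcal K\}$ in the Grassmannian, $\mathcal K_0\supseteq\mathfrak m\,\mathbf f_2$. This is the key point: it lets me avoid any higher-order expansion of $g$. Setting $\mathbf f_1=\mathbf e_1$, so $\mathbf f_1,\mathbf f_2$ is a basis, the quotient $R^{2}/\mathfrak m\mathbf f_2\cong R\mathbf f_1\oplus k\,\overline{\mathbf f_2}$ has socle $\mathfrak m\mathbf f_1\oplus k\,\overline{\mathbf f_2}$, and the line $\mathcal K_0/\mathfrak m\mathbf f_2$ is spanned by a socle vector $\xi=\ell\mathbf f_1+\lambda\,\overline{\mathbf f_2}$ with $\ell\in\mathfrak m$. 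A short check shows that if $\lambda\neq0$ then $R^{2}/\mathcal K_0\cong R$ is pure, so the limit lies in $M_f$; whereas if $\lambda=0$ then $\ell\neq0$ and $R^{2}/\mathcal K_0\cong k\oplus R/(\ell)$, which by \zcref{lem:limiting_case} is a point of $\Gamma$ with coordinates $[\ell]\in\mathbb P^{d}$ and $(u:v)\in\mathbb P^{1}$. Hence every non-pure limit lies in $\Gamma$.

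For the reverse inclusion $\Gamma\subseteq\overline{M_f}$ I would realize each point of $\Gamma$ explicitly as such a limit. Given $((u:v),[\ell])\in\mathbb P^{1}\times\mathbb P^{d}\cong\Gamma$ as in \zcref{lem:structure_of_gamma}, set $g(t)=\ell\,\mathbf f_1+t\,\mathbf f_2$ with $\mathbf f_1=\mathbf e_1$, $\mathbf f_2=u\mathbf e_1+v\mathbf e_2$. For $t\neq0$ the constant terms $(tu,tv)$ are not both zero, so $g(t)$ is unimodular and $\mathcal K(t)=R\,g(t)\in M_f$. Since $x_j\,g(t)=t\,x_j\mathbf f_2$ and $g(0)=\ell\mathbf f_1$, the flat limit is $\mathcal K_0=\mathfrak m\mathbf f_2\oplus k\,\ell\mathbf f_1$, i.e.\ the prescribed point $k\oplus R/(\ell)$ of $\Gamma$. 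Letting $((u:v),[\ell])$ range over $\mathbb P^{1}\times\mathbb P^{d}$ and matching with the coordinates of \zcref{lem:structure_of_gamma} sweeps out all of $\Gamma$. Combining the two inclusions yields $\overline{M_f}\setminus M_f=\Gamma$.

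I expect the main obstacle to be the flat-limit step in the inclusion $\subseteq$: justifying rigorously that $\mathcal K_0$ contains the full $(d+1)$-dimensional space $\mathfrak m\,\mathbf f_2$ (a closed-incidence argument in $\Gr(d+2,R^{2})$, together with the existence of $\lim(\alpha:\beta)$), and then that any $R$-submodule of colength $d+2$ containing it is forced into the two-case socle description, with \emph{non-purity} equivalent to $\lambda=0$. A secondary point needing care is the passage between the sheaf loci $\Theta_p,\Theta_d$ and the purely module-theoretic picture, namely confirming through \zcref{lem:limiting_case} and \zcref{def:limiting_case} that $k\oplus R/(\ell)$ genuinely corresponds to a non-locally-free pair whose torsion has length $d$ and whose purification is $\mathcal O_C^{2}$, so that the boundary really lands in $\Theta_d$ rather than slipping back into $\Theta_p$.
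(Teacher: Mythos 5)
Your argument is correct, and it lives in the same module-theoretic frame the paper itself uses: both identify a point of the moduli space with an \(R\)-submodule \(\mathcal{K}\subseteq R^{2}\) of colength \(d+2\) (the embedding into \(\Gr(d+2,2(d+2))\) that also underlies \zcref{lem:smooth_oc_ocp,lem:neibor_Gamma_2}), and both prove \(\Gamma\subseteq\overline{M_f}\) by exhibiting explicit one-parameter degenerations of unimodular kernels. Where you genuinely diverge is the harder inclusion \(\overline{M_f}\setminus M_f\subseteq\Gamma\). The paper starts from the coordinates of \zcref{lem:quotient_to_o2c}, fixes a direction \((u_0:\dots:u_d)\), rescales \(b_i=u_i/t\), computes the limits of the listed generators in the three charts, and then asserts that these are all the possible limits. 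You instead classify every flat limit a priori: because \(\mathfrak{m}^{2}=0\), the subspace \(\mathfrak{m}\,g(t)\) depends only on the constant part of \(g(t)\), so any limit kernel contains the \((d+1)\)-plane \(\mathfrak{m}\mathbf{f}_2\) with \([\mathbf f_2]=\lim(\alpha:\beta)\); a dimension count forces the residual line into the socle of \(R^{2}/\mathfrak{m}\mathbf{f}_2\), and the resulting dichotomy (\(\lambda\neq0\): the limit is again unimodular and stays in \(M_f\); \(\lambda=0\): the limit is \(k\oplus R/(\ell)\), a point of \(\Gamma\) by \zcref{lem:limiting_case}) is exhaustive. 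This buys a complete justification of the forward inclusion, which the paper only verifies along a special family of arcs. Three small repairs: when \(\lim(\alpha:\beta)=(1:0)\) you must complete \(\mathbf f_2\) to a basis with \(\mathbf f_1=\mathbf e_2\) rather than \(\mathbf e_1\); the socle claim needs the one-line check that a nonzero constant term in \(\xi\) would force \(\mathfrak{m}\mathbf{f}_1\subseteq\mathcal{K}_0\) and hence \(\dim\mathcal{K}_0\geq 2d+2>d+2\); and your \((u:v)=[\mathbf f_2]\) differs by a twist from the \((u:v)\) of \zcref{lem:structure_of_gamma}, where the kernel contains \((vx_i,-ux_i)\) — harmless, but worth recording when matching the two parametrisations of \(\Gamma\simeq\mathbb{P}^{1}\times\mathbb{P}^{d}\).
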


\begin{proof}
	Points of \(M_f\) correspond to quotients \(\mathcal{O}_{Z}^2\to \mathcal{O}_{2C}\to 0\) described in
	\zcref{lem:quotient_to_o2c}.
	Such a quotient is determined by the choice of global sections \(e,h\in H^0(Z,\mathcal{O}_{2C})\).
	By \eqref{eq:generator_1} if \(e\) is invertible then the kernel is generated, as an \(R\)-module, by
	\[
		\Big(a_1+\sum_{i=0}^d{b_ix_i},1\Big)
	\]
	for some \((a_1,b_0,\cdots,b_d)\in\mathbb{A}^1\times\mathbb{A}^{d+1}\).
	In particular, the kernel contains \((a_1x_i,x_i)\) for \(i=0,\cdots,d\).

	If both \(e\) and \(h\) are invertible, \(a_1\neq 0\) and the kernel may alternatively be generated by
	\[
		\Big(1,\frac{1}{a_1}-\sum_{i=0}^d\frac{b_i}{a_1^2}x_i\Big).
	\]
	Fix \((u_0:u_1:\cdots:u_d)\in\mathbb{P}^d\) and set \(b_i=\frac{u_i}{t}\) for some \(t\neq 0\).
	Then the \(d+3\) generators of the kernel take the form
	\[
		\Big(a_1t+\sum_{i=0}^du_ix_i,t\Big),(a_1x_0,x_0),\cdots, (a_1x_d,x_d), \Big(a_1^2t,a_1t+\sum_{i=0}^du_ix_i\Big).
	\]
	Letting \(t\to 0\), these converge to
	\[
		\Big(\sum_{i=0}^du_ix_i,0\Big),(a_1x_0,x_0),\cdots, (a_1x_d,x_d), \Big(0,\sum_{i=0}^du_ix_i\Big)
	\]
	which generate the kernel described in \zcref{lem:structure_of_gamma} for \((u:v)=(a_1:1)\in\mathbb{P}^1\).
	This implies the limit of the point in \(M_f\) along the direction \((u_0:u_1:\cdots:u_d)\) is a point in \(\Gamma\).

	Next, assume \(e\) is invertible but \(h\) is not.
	Then \(a_1=0\) and the kernel contains
	\[
		\Big(\sum_{i=0}^d{b_ix_i},1\Big),(0,x_0),\cdots,(0,x_d).
	\]
	Again fix \((u_0:\cdots:u_d)\in\mathbb{P}^d\) and set \(b_i=\frac{u_i}{t}\) for \(t\neq 0\).
	Multiplying the first element by \(t\) gives \((\sum_{i=0}^d{u_ix_i},t)\).
	Letting \(t\to 0\), these \(d+2\) elements converge to
	\[
		\Big(\sum_{i=0}^d{u_ix_i},0\Big),(0,x_0),\cdots, (0,x_d),
	\]
	which generate the kernel appearing in \zcref{lem:structure_of_gamma} for \((u:v)=(0:1)\in\mathbb{P}^1\).

	The case where \(h\) is invertible and \(e\) is not is symmetric and yields limits with kernels
	of the type in \zcref{lem:structure_of_gamma} corresponding to \((u:v)=(1:0)\in\mathbb{P}^1\).
	These are exactly all the possible limits of points of \(M_f\), and they form the locus \(\Gamma\)
	Hence \(\overline{M_f}\setminus M_f=\Gamma\).
\end{proof}

\medskip

\subsection{\(d=1\)}
We first treat the case \(d=1\) and show that the moduli space of sheaf stable pairs
\(M_Z(2,2C,-2)\) is the projectivisation of a vector bundle of rank \(d+2=3\).

\begin{theorem}\label{thm:moduli_space_d1}
	Let \(C\) be a \((-1)\)-curve on a smooth projective surface \(Z\).
	Then the moduli space \(M_Z(2,2C,-2)\) of sheaf stable pairs with fixed Chern character
	is isomorphic to \(\mathbb{P}_{\mathbb{P}^1}(\mathcal{O}(2)^{\oplus 2}\oplus\mathcal{O})\).
\end{theorem}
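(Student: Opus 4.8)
The plan is to assemble the structural results already established for \(M_Z(2,2C,-2d)\) in the special case \(d=1\), where the locally free locus and its boundary together exhaust the whole moduli space. First I would invoke \zcref{thm:M_f_theta_p} to write \(M_Z(2,2C,-2)=M_f\sqcup M_f'\), with \(M_f\) the open locus of locally free pairs and \(M_f'\) its closed complement. By \zcref{lem:quotient_to_o2c} with \(d=1\), the locus \(M_f\) is the total space of \(\mathcal{O}_{\mathbb{P}^1}(-2)^{\oplus2}\), an \(\mathbb{A}^2\)-bundle over \(\mathbb{P}^1\); in particular it is irreducible, smooth, and of dimension \(3\). By \zcref{thm:M_f_theta_p} together with \zcref{prop:M_f'_and_quot,prop:theta_d_and_quot}, the complement is \(M_f'\simeq\Quot(\mathcal{O}_C^2,1)\); since \(C\simeq\mathbb{P}^1\) this Quot-scheme is the projectivisation \(\mathbb{P}(\mathcal{O}_C^2)\simeq\mathbb{P}^1\times\mathbb{P}^1\), hence irreducible, smooth, and of dimension \(2\).

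Next I would identify the boundary of \(M_f\) with all of \(M_f'\). By \zcref{lem:limit_of_gamma} the closure satisfies \(\overline{M_f}\setminus M_f=\Gamma\), and by \zcref{lem:structure_of_gamma} with \(d=1\) we have \(\Gamma\simeq\mathbb{P}^1\times\mathbb{P}^d=\mathbb{P}^1\times\mathbb{P}^1\). Since \(\Gamma\subseteq\Theta_d\simeq M_f'\) is closed and irreducible of dimension \(2\), equal to the dimension of the irreducible variety \(M_f'\), it must coincide with it, so \(\Gamma=M_f'\). This is precisely the numerical coincidence special to \(d=1\): for \(d\geq2\) one has \(\dim\Gamma=d+1<2d=\dim M_f'\), which is why two distinct components appear in \zcref{thm:main_negative_curve}. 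Consequently \(M_Z(2,2C,-2)=M_f\sqcup M_f'=M_f\sqcup\Gamma=\overline{M_f}\), so the entire moduli space is the closure of the bundle \(M_f\).

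Finally I would identify \(\overline{M_f}\) with the fibrewise projective completion of \(\mathcal{O}_{\mathbb{P}^1}(-2)^{\oplus2}\). The total space of \(E\coloneqq\mathcal{O}_{\mathbb{P}^1}(-2)^{\oplus2}\) sits inside \(\mathbb{P}(E\oplus\mathcal{O})\) as the complement of the section at infinity \(\mathbb{P}(E)\simeq\mathbb{P}^1\times\mathbb{P}^1\), which is exactly the boundary \(\Gamma\). To upgrade the set-theoretic equality \(M_Z=\overline{M_f}\) to a scheme isomorphism \(\overline{M_f}\simeq\mathbb{P}(E\oplus\mathcal{O})\), I would glue the explicit charts of \zcref{lem:quotient_to_o2c} to the boundary using the degeneration of \zcref{lem:limit_of_gamma}: writing \(b_i=u_i/t\) and letting \(t\to0\) realises the rescaling \([u_0:u_1:t]\to[u_0:u_1:0]\) that attaches the affine fibre \(\mathbb{A}^2\) to its projective boundary \(\mathbb{P}^1\) exactly as in \(\mathbb{P}(E\oplus\mathcal{O})\) over the standard cover of \(\mathbb{P}^1\). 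Equivalently, the bundle projection \(M_f\to\mathbb{P}^1\) extends to a morphism \(M_Z\to\mathbb{P}^1\) with every fibre a \(\mathbb{P}^2\), exhibiting \(M_Z\) as a \(\mathbb{P}^2\)-bundle. Once the isomorphism \(\overline{M_f}\simeq\mathbb{P}(E\oplus\mathcal{O})\) is in hand, the identity \(\mathbb{P}(\mathcal{V})\simeq\mathbb{P}(\mathcal{V}^\vee)\) gives
\[
\mathbb{P}(\mathcal{O}(-2)^{\oplus2}\oplus\mathcal{O})\simeq\mathbb{P}\bigl((\mathcal{O}(-2)^{\oplus2}\oplus\mathcal{O})^\vee\bigr)=\mathbb{P}(\mathcal{O}(2)^{\oplus2}\oplus\mathcal{O}),
\]
which is the asserted form. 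The main obstacle is exactly this scheme-theoretic identification: the earlier steps supply only the set-theoretic decomposition and the bundle structure on the dense open \(M_f\), so the genuine work is to verify that \(M_Z\) acquires no embedded or non-reduced structure along \(\Gamma\) and that the gluing of \(M_f\) to \(\Gamma\) is precisely that of the projective completion.
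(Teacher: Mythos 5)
Your overall strategy is the same as the paper's: decompose \(M_Z(2,2C,-2)=M_f\sqcup M_f'\), identify \(M_f\) with the total space of \(\mathcal{O}_{\mathbb{P}^1}(-2)^{\oplus2}\) via \zcref{lem:quotient_to_o2c}, observe that for \(d=1\) the boundary \(\Gamma\) of \zcref{lem:limit_of_gamma} exhausts \(M_f'\simeq\Quot(\mathcal{O}_C^2,1)\simeq\mathbb{P}^1\times\mathbb{P}^1\) (your dimension count for \(\Gamma=M_f'\) is a clean variant of the paper's direct identification \(\Gamma=\Theta_1\)), and conclude that the moduli space is a compactification of the bundle \(M_f\) by a \(\mathbb{P}^1\times\mathbb{P}^1\) at infinity. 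But there is a genuine gap at exactly the point you flag as ``the main obstacle'': you never establish the scheme structure of \(M_Z(2,2C,-2)\) along \(\Gamma\). This is the content of \zcref{lem:smooth_oc_ocp}, which you do not invoke and for which you offer no substitute. The paper proves it by embedding the moduli space into \(\Gr(3,6)\) (quotients \(R^2\to L\) with \(R=k[x,y]/(x,y)^2\) and \(\dim_k L=3\)), writing the local equations near a point of \(\Gamma\) imposed by \((x,y)\)-invariance of the kernel, finding that this invariance locus is a union of two smooth components \(V(d,g)\) and \(V(d+cg,f+ci)\), and identifying the moduli space locally with the second one; only with this smoothness in hand does the final identification of the compactification (via the uniqueness statement the paper cites) go through.

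Your proposed replacement --- reading the boundary gluing off the \(t\to0\) degeneration in \zcref{lem:limit_of_gamma} --- cannot close this gap as stated. That lemma is a purely set-theoretic statement about limit points: it tells you which points of \(\Gamma\) lie in \(\overline{M_f}\), but it says nothing about whether the moduli scheme (a closed subscheme of the Quot-scheme, hence of the Grassmannian) is reduced, or smooth, or has the expected tangent space along \(\Gamma\). A priori \(M_Z(2,2C,-2)\) could carry an embedded component or a singularity along \(\Gamma\) --- and the local computation in \zcref{lem:smooth_oc_ocp} shows this worry is not idle, since the naive invariance equations do cut out a reducible locus through each boundary point, and one must argue which branch the moduli space actually is. So the missing ingredient is precisely a local (Zariski-tangent-space or explicit-equation) analysis at points of \(\Gamma\); without it the step from ``\(M_Z=\overline{M_f}\) as sets, with \(M_f\) an \(\mathbb{A}^2\)-bundle'' to ``\(M_Z\simeq\mathbb{P}_{\mathbb{P}^1}(\mathcal{O}(2)^{\oplus2}\oplus\mathcal{O})\) as schemes'' is unjustified. (A minor further point: your appeal to \(\mathbb{P}(\mathcal{V})\simeq\mathbb{P}(\mathcal{V}^\vee)\) is a projectivisation-convention issue rather than an isomorphism valid within one convention; the fibrewise completion of the total space of \(E\) should simply be written in whichever convention makes it \(\mathbb{P}_{\mathbb{P}^1}(\mathcal{O}(2)^{\oplus2}\oplus\mathcal{O})\), as the paper does.)
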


To prove \zcref{thm:moduli_space_d1}, we first analyse the locus \(\Gamma\) from \zcref{def:limiting_case}.
In the case \(d=1\),
this is equivalent to parametrize all quotients
\[
	R^2\to k\oplus k[x,y]/(x,y)^2+(ux+vy),
\]
for all \((u:v)\in \mathbb{P}^1\), where \(R\) is the Artinian ring \(k[x,y]/(x,y)^2\).
Using \zcref{prop:theta_d_and_quot} we also have \(\Theta_1\simeq\Quot(\mathcal{O}_{C}^2,1)=C\times\mathbb{P}^1\),
and therefore \(\Gamma=\Theta_1\) (\zcref{lem:structure_of_gamma}).

The free \(R\)-module \(R^2\) may be viewed as a \(k\)-vector space of dimension \(6\),
with basis
\[
	(1,0), (0,1), (x,0), (0,x), (y,0), (0,y).
\]
We will realise \(M_Z(2,2C,-2)\) as a subvariety of the Grassmannian \(\Gr(3,6)\)
and study it in a neighbourhood of such a point.

\begin{lemma}\label{lem:smooth_oc_ocp}
	The moduli space \(M_Z(2,2C,-2)\) is smooth at every point of \(\Gamma\).
\end{lemma}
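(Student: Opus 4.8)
The plan is to prove smoothness by a Zariski tangent space computation in the Grassmannian chart, exploiting that $M\coloneqq M_Z(2,2C,-2)=\overline{M_f}$ is irreducible of dimension $3$: by \zcref{lem:quotient_to_o2c} the open locus $M_f$ is the total space of $\mathcal O_{\mathbb P^1}(-2)^{\oplus 2}$, hence smooth of dimension $3$, and by \zcref{lem:limit_of_gamma} we have $\overline{M_f}\setminus M_f=\Gamma$. Since a point of an irreducible $3$-fold is smooth exactly when its tangent space is $3$-dimensional, it suffices to show $\dim_k T_{[K]}M=3$ for every $[K]\in\Gamma$. I would carry this out at one explicit representative, the computation being identical at every point of $\Gamma$ once the kernel $K$ is written down via \zcref{lem:structure_of_gamma} (and one may also invoke the $\GL_2$-symmetry coming from $\Aut(\mathcal O_Z^{2})$).

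First I would set up the chart. With $R=k[x,y]/(x,y)^2$, the module $R^{2}$ has $k$-basis $(1,0),(0,1),(x,0),(0,x),(y,0),(0,y)$, and a convenient $\Gamma$-point is $K=\langle(x,0),(y,0),(0,x)\rangle$, with $L=R^{2}/K$ spanned by the classes of $(1,0),(0,1),(0,y)$. In the standard affine chart of $\Gr(3,6)$ about $[K]$, a nearby subspace $K'$ is the graph of a homomorphism $\psi\colon K\to L$; recording $\psi$ on the three generators by coordinates $(\alpha_j,\beta_j,\gamma_j)$ (components along $(1,0),(0,1),(0,y)$) identifies the chart with $\mathbb A^{9}$. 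I would then impose that $K'$ be an $R$-submodule, i.e. $x\cdot K'\subseteq K'$ and $y\cdot K'\subseteq K'$. Computing $x\cdot g_j=\alpha_j(x,0)+\beta_j(0,x)$ and $y\cdot g_j=\alpha_j(y,0)+\beta_j(0,y)$ on each generator $g_j$ and expressing the outcome through the graph yields explicit quadratic relations. The crucial point is that $(0,y)\notin K$ while $(x,0),(y,0),(0,x)\in K$: the term $\beta_j(0,y)$ in $y\cdot g_j$ is a genuine complement component, so matching it produces the relations $\beta_j=\alpha_j\gamma_2$, whose \emph{linear} part forces $\beta_1=\beta_2=\beta_3=0$ to first order.

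Next I would isolate $M=\overline{M_f}$ among the solutions. The submodule equations split the locus at $[K]$ into two $3$-dimensional branches meeting along $\Gamma$: one consists of submodules contained in the socle $\mathfrak m R^{2}$ (whose quotient needs two generators, hence is never cyclic), and the other is the branch whose generic quotient is cyclic, i.e. isomorphic to $R$, equivalently to $\mathcal O_{2C}$ — the locally free case of \zcref{lem:E_locally_free}. Since $M_f$ consists exactly of the cyclic quotients, $M=\overline{M_f}$ is the second branch. Solving the equations there exhibits it as the smooth graph $\mathbb A^{3}\hookrightarrow\mathbb A^{9}$, $(\alpha_3,\gamma_2,\gamma_3)\mapsto\bigl(\alpha_1,\dots\bigr)$ with $\alpha_2=0,\ \alpha_1=-\gamma_2\alpha_3,\ \gamma_1=-\gamma_2\gamma_3,\ \beta_j=\alpha_j\gamma_2$; equivalently, linearising its defining equations at the origin gives the six independent conditions $\alpha_1=\alpha_2=\gamma_1=\beta_1=\beta_2=\beta_3=0$, so $T_{[K]}M=\langle\partial_{\alpha_3},\partial_{\gamma_2},\partial_{\gamma_3}\rangle$ is $3$-dimensional. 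This proves smoothness at $[K]$.

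The main obstacle is bookkeeping rather than conceptual: one must distinguish the tangent space of $M$ from that of the ambient scheme of all $R$-submodules. The latter equals $\Hom_R(K,L)$, which has dimension $6$ — since $K$ lies in the socle one has $\Hom_R(K,L)=\Hom_k(K,\operatorname{soc}L)$ with $\dim_k\operatorname{soc}L=2$ — and this overcounts, the scheme of submodules being singular (two $3$-dimensional branches with a $6$-dimensional tangent space). What rescues the count is precisely the linear term forced by $(0,y)\notin K$, which is the infinitesimal shadow of the cyclic-quotient (local freeness) condition and trims the tangent space of the relevant branch back to $3$. I would close by remarking that this calculation is carried out verbatim at every point of $\Gamma$, the only change being the explicit presentation of $K$ from \zcref{lem:structure_of_gamma}.
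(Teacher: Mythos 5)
Your proposal is correct and follows essentially the same route as the paper's proof: both embed \(M_Z(2,2C,-2)\) into \(\Gr(3,6)\) via the \(R\)-module of global sections, write a local chart at a point of \(\Gamma\), impose \((x,y)\)-invariance to find two smooth three-dimensional branches meeting along \(\Gamma\), and identify \(\overline{M_f}\) with the branch of generically cyclic quotients, concluding smoothness there. Your tangent-space framing and the remark that \(\Hom_R(K,L)\) is six-dimensional are consistent with (and a slightly sharper bookkeeping of) the paper's direct statement that the two local equations \(d+cg=f+ci=0\) cut out a smooth subvariety.
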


\begin{proof}
	Since \(M_Z(2,2C,-2)\simeq\Theta\) and \(\Theta\) parametrises certain quotients
	\[
		R^2\longrightarrow L,
	\]
	with \(L\) an \(R\)-module of dimension \(3\),
	the space \(M_Z(2,2C,-2)\) may be viewed as a subscheme of \(\Gr(3,6)\).

	We now write local equations for \(M_Z(2,2C,-2)\) inside the Grassmannian \(\Gr(3,6)\).
	Fix \((u:v)=(0:1)\) and consider the quotient \(R^2\to k\oplus k[x]/(x^2)\).
	The argument for general \((u:v)\in\mathbb{P}^1\) is identical.

	Let \(M_0\subseteq R^2\) be the submodule generated by the vectors \((x,0),(y,0),(0,y)\).
	Then \(R^2/M_0\simeq k\oplus k[x]/(x^2)\) as \(R\)-modules, and
	as a \(k\)-vector space it is spanned by the vectors \((1,0),(0,1),(0,x)\).
	Denote by \(w\) the corresponding point of \(M_Z(2,2C,-2)\),
	i.e., the quotient \(R^2\to k\oplus k[x]/(x^2)\) with kernel \(M_0\).

	Then the points in \(\Gr(3,6)\) around \(w\) represent the quotient \(R^2\to R^2/M\),
	where \(M\) is generated by the three rows \(R_1\), \(R_2\) and \(R_3\) of the matrix
	\begin{equation}
		\begin{pmatrix}
			a & 1 & 0 & b & c & 0 \\
			d & 0 & 1 & e & f & 0 \\
			g & 0 & 0 & h & i & 1
		\end{pmatrix},
	\end{equation}
	with \(a,b,\dots,i\) serving as local coordinates on \(\Gr(3,6)\) around \(w\).

	A necessary condition for such a quotient to define a point lies in \(M_Z(2,2C,-2)\)
	is that \(R^2/M\) is an \(R\)-module, i.e., when the subspace \(M\) is \((x,y)\)-invariant.
	Since \(x(a+x,b+cx)=(ax,bx)\), we have \(xR_1=aR_1\).
	Similarly, \(yR_1=aR_2+bR_3\), \(xR_2=dR_1\), \(yR_2=dR_2+eR_3\), \(xR_3=gR_1\) and \(yR_3=gR_2+hR_3\).
	These conditions translate into the equations
	\[
		\begin{cases}
			a=b=0,        \\
			e = cd,       \\
			d^2 + eg = 0, \\
			de + eh = 0,  \\
			df + ei = 0,  \\
			h = cg,       \\
			dg + gh = 0,  \\
			eg + h^2 = 0, \\
			fg + hi = 0.
		\end{cases}
	\]
	Simplify the equations, we get
	\[
		\begin{cases}
			(d+cg)d=0, \\
			(f+ci)d=0, \\
			(d+cg)g=0, \\
			(f+ci)g=0.
		\end{cases}
	\]
	Thus the locus is the union of two smooth components \(V(d,g)\) and \(V(d+cg,f+ci)\).
	The family \(M_f\) lies in the component \(V(d+cg,f+ci)\),
	hence locally around the point \(w\),
	the equations for \(M_Z(2,2C,-2)\) are given by \(d+cg=f+ci=0\).
	These two equations cut out a smooth subvariety, so
	\(M_Z(2,2C,-2)\) is smooth at \(w\).
\end{proof}

%

\begin{proof}[Proof of \zcref{thm:moduli_space_d1}]
	By \zcref{prop:M_f'_and_quot}, \(M_Z(2,2C,-2)=M_f\sqcup\Quot(\mathcal{O}_C^2,1)\).
	As observed above, \(\Gamma\) coincides with \(\Theta_1=\Quot(\mathcal{O}_{C}^2,1)\),
	and by \zcref{lem:limit_of_gamma} it parametrises precisely the limit points of \(M_f\).
	Thus \(M_Z(2,2C,-2)\) is obtained from \(M_f\) by adding the boundary \(\Gamma\).
	By \zcref{lem:quotient_to_o2c}, the locus \(M_f\) is smooth.
	By \zcref{lem:smooth_oc_ocp}, \(M_Z(2,2C,-2)\) is smooth around \(\Gamma\).
	Hence \(M_Z(2,2C,-2)\) is a smooth compactification of \(M_f\).

	Moreover, \zcref{lem:quotient_to_o2c} identifies \(M_f\) with the total space of \(\mathcal{O}(-2)^{\oplus 2}\),
	i.e., a \(\mathbb{A}^2\)-bundle over \(\mathbb{P}^1\).
	The projectivisation \(\mathbb{P}_{\mathbb{P}^1}(\mathcal{O}(2)^{\oplus 2}\oplus\mathcal{O})\)
	is also a smooth compactification of \(M_f\) by adding the divisor \(\mathbb{P}^1\times\mathbb{P}^1\) at infinity.
	Since the smooth compactification is unique by \cite[Proposition~(8.4.2)]{grothendieck1961elements},
	we get \(M_Z(2,2C,-2)=\mathbb{P}_{\mathbb{P}^1}(\mathcal{O}(2)^{\oplus 2}\oplus\mathcal{O})\).
\end{proof}

\subsection{\(d\geq 2\)}

Assume the curve \(C\) on the smooth surface \(Z\) satisfies \(C^2=-d\) with \(d\geq 2\).
Recall from \zcref{thm:M_f_theta_p} that \(\Theta_d\) is isomorphic to \(\Quot(\mathcal{O}_{C}^2,d)\).
Since \(\dim\Quot(\mathcal{O}_{C}^2,d)=2d\),
\(\Gamma\simeq\mathbb{P}^1\times\mathbb{P}^d\subseteq\Theta_d\) is a proper subset.
By embedding into \(\Gr(d+2,2(d+2))\), we have the following local description.

\begin{lemma}\label{lem:neibor_Gamma_2}
	Locally around each point of \(\Gamma\), the moduli space \(M_Z(2,2C,-2d)\) has two smooth components.
	One component with dimension \(d+2\) is in \(M_f\),
	and another component with dimension \(2d\) is in \(\Quot(\mathcal{O}_C^2,d)\).
\end{lemma}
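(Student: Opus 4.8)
The plan is to localise at a point $w\in\Gamma$ and reduce the whole statement to the smoothness of a single branch, exactly as in the rank‑$2$, $d=1$ computation of \zcref{lem:smooth_oc_ocp}. First I would record the two candidate components. By \zcref{thm:M_f_theta_p} we have $M_Z(2,2C,-2d)\cong\Theta$, and set‑theoretically $M_Z=M_f\sqcup M_f'$ with $M_f'\cong\Theta_d\cong\Quot(\mathcal O_C^2,d)$ by \zcref{prop:M_f'_and_quot,prop:theta_d_and_quot}. Since $\Gamma=\overline{M_f}\setminus M_f$ by \zcref{lem:limit_of_gamma} and $\Gamma\subseteq\Theta_d$ by \zcref{lem:structure_of_gamma}, we get $M_Z=\overline{M_f}\cup\Theta_d$ near $w$. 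The component $\Theta_d\cong\Quot(\mathcal O_C^2,d)$ is automatically smooth of dimension $2d$, being a Quot‑scheme of length‑$d$ quotients of a locally free sheaf on the smooth curve $C$; this gives one component for free. As $M_f$ (locally free locus) is disjoint from $\Theta_d$ (non‑locally‑free locus), we have $\overline{M_f}\not\subseteq\Theta_d$, so the two irreducible sets are genuinely distinct components through $w$ that exhaust $M_Z$. It therefore remains only to show that $\overline{M_f}$ is smooth of dimension $d+2$ along $\Gamma$.

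For that I would use the embedding $\Theta\hookrightarrow\Gr(d+2,2(d+2))$ from the discussion preceding the lemma, writing points as $R$‑module quotients $R^2\to L$ with $\dim_k L=d+2$, where $R=k[x_0,\dots,x_d]/(x_0,\dots,x_d)^2$. Using the $\GL_2$‑action on $\mathcal O_Z^2$, I would normalise the $\mathbb P^1$‑factor of $\Gamma$ to $(u:v)=(0:1)$ and take $w$ to be the standard quotient $L=k\oplus R/(x_d)$ with kernel spanned by $x_0e_1,\dots,x_de_1,x_de_2$ (the analysis being uniform in the remaining $\mathbb P^d$‑parameter). On the affine chart of the Grassmannian around $w$ I would express the $d+2$ generators of a nearby subspace as a matrix with a $(d+2)\times(d+2)$ identity block on the pivot positions and a $(d+2)\times(d+2)$ block of free parameters, exactly as in \zcref{lem:smooth_oc_ocp}.

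Next I would impose that the chosen subspace be an $R$‑submodule, i.e.\ invariant under each $x_j$. Because $x_ix_j=0$ in $R$, multiplication by $x_j$ sees only the $e_1,e_2$‑components of the generators, so the invariance equations are explicit and quadratic, generalising the $d=1$ system $(d+cg)d=(f+ci)d=(d+cg)g=(f+ci)g=0$. The branch of $\Theta$ containing $M_f$ is the cyclic locus, where $L\cong R$ is generated by a single element (equivalently $\dim_k\mathfrak mL=d+1$); on this branch I expect the quadratic relations to solve for the redundant coordinates in terms of $d+2$ free ones, exhibiting $\overline{M_f}$ as smooth of dimension $d+2$ at $w$ and meeting $\Theta_d$ along $\Gamma\cong\mathbb P^1\times\mathbb P^d$. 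Combined with the free smoothness of $\Theta_d$, this yields the two smooth components of the asserted dimensions.

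The main obstacle is twofold. The first is the index bookkeeping: for general $d$ there are $(d+2)^2$ coordinates and $O(d^2)$ invariance equations, and one must organise them so as to isolate precisely the cyclic branch and verify its smoothness without drowning in the general torsion stratum. The second, more serious point is that the naive module‑invariance locus is strictly larger than $M_Z=\Theta$: already the locus on which all $e_1,e_2$‑components vanish has dimension $d(d+2)$, far exceeding $\dim\Theta_d=2d$. Hence, unlike the $d=1$ case, one cannot read both components off the matrix equations alone; the torsion branch must be pinned to dimension $2d$ by invoking the identification $\Theta_d\cong\Quot(\mathcal O_C^2,d)$ rather than re‑deriving it, and the explicit computation is then deployed only to control the single cyclic branch $\overline{M_f}$ and its intersection with $\Theta_d$ along $\Gamma$. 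Carrying this division of labour through rigorously — keeping track of which equations genuinely cut out $\overline{M_f}$ as opposed to the ambient module scheme — is the technical heart of the argument.
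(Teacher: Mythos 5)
Your proposal matches the paper's proof in both strategy and substance: the paper likewise embeds $\Theta$ into the Grassmannian of $R$-module quotients, imposes $(x_0,\dots,x_d)$-invariance, reads off the cyclic branch $\overline{M_f}$ as a smooth $(d+2)$-dimensional component from the resulting quadratic equations, and leaves the second component to the identification $\Theta_d\cong\Quot(\mathcal{O}_C^2,d)$. The only difference is one of completeness: the paper carries out the matrix computation explicitly only for $d=2$ (declaring the general case ``similar but more complicated''), whereas you outline the general-$d$ computation and correctly flag the separation of the cyclic branch from the larger module-invariance locus as the remaining technical work.
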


\begin{proof}
	We only do the calculation for \(d=2\),
	the general case is similar but more complicated.

	In our setting, \(\Gamma\) parametrises all quotients
	\[
		R^2 \longrightarrow k\oplus k[x,y,z]/(x,y,z)^2+(a_1x+a_2y+a_3z)
	\]
	for all \((a_1:a_2:a_3)\in \mathbb{P}^2\),
	where \(R\) is the Artinian ring \(k[x,y,z]/(x,y,z)^2\).
	Let \(M_0\subseteq \mathbb{R}^2\) be the submodule generated by the vectors
	\[
		(x,0),(y,0),(z,0)(0,z).
	\]
	Then \(R/M_0\simeq k\oplus k[x,y]/(x,y)^2\) as \(R\)-modules,
	and as a \(k\)-vector space it is spanned by the vectors \((1,0),(0,1),(0,x),(0,y)\).
	Denote by \(w\) the corresponding point of \(M_Z(2,2C,-4)\),
	i.e., the quotient \(R^2\to k\oplus k[x,y]/(x,y)^2\) with kernel \(M_0\).

	Then the points in \(\Gr(4,8)\) around \(w\) represents the quotients \(R^2\to R^2/M\),
	where \(M\) is generated by the four rows \(R_1\), \(R_2\), \(R_3\) and \(R_4\) of the matrix
	\[
		\begin{pmatrix}
			a & 1 & 0 & 0 & b & c & d & 0 \\
			e & 0 & 1 & 0 & f & g & h & 0 \\
			i & 0 & 0 & 1 & j & k & l & 0 \\
			m & 0 & 0 & 0 & n & p & q & 1
		\end{pmatrix}
	\]
	where the entries \(a,\dots,l\) are local parameters.

	As before, for the quotient corresponding to a point of \(M_Z(2,2C,-4)\),
	the subspace \(M\) is \((x,y,z)\)-invariant.
	Then
	\[
		xR_1=(0,a,0,0,0,b,0,0)=aR_1=(a^2,a,0,0,ab,ac,ad,0),
	\]
	and we have \(a=0, b=ac=0\).
	From
	\[
		yR_2=(0,0,e,0,0,f,0,0)=eR_2=(e^2,e,0,0,ef,eg,eh,0),
	\]
	we get \(e=0, f=eg=0\).

	For simplicity, we rewrite the matrix as follows.
	\[
		\begin{pmatrix}
			0 & 1 & 0 & 0 & 0 & a & b & 0 \\
			0 & 0 & 1 & 0 & 0 & c & d & 0 \\
			e & 0 & 0 & 1 & f & g & h & 0 \\
			i & 0 & 0 & 0 & j & k & l & 1
		\end{pmatrix}
	\]
	Then
	\[
		x,y,zR_1=0=x,y,zR_2.
	\]
	Moreover, we have
	\begin{align*}
		& xR_3=(0,e,0,0,0,f,0,0)=eR_1=(0,e,0,0,0,ae,be,0), \\
		& xR_4=(0,i,0,0,0,j,0,0)=iR_1=(0,0,i,0,0,ai,bi,0), \\
		& yR_3=(0,0,e,0,0,0,f,0)=eR_2=(0,0,e,0,0,ce,de,0), \\
		& yR_4=(0,0,i,0,0,0,j,0)=iR_2=(0,0,i,0,0,ci,di,0).
	\end{align*}
	This implies
	\[
		f = ae = de, j = ai = di, 0 = be = bi = ce = ci,
	\]
	Similarly for \(z\), we get
	\[
		\begin{cases}
			e^2+fi = 0, \\
			ef+fj = 0, \\
			eg+fk = 0, \\
			eh+fl = 0,
		\end{cases}
	\]
	and
	\[
		\begin{cases}
			ei+ij = 0, \\
			fi+j^2 = 0, \\
			gi+jk = 0, \\
			hi+jl = 0.
		\end{cases}
	\]

	In the case \(b\neq 0\),
	one has \(e=i=0\), and hence \(f=j=0\).
	This is not the locus of \(\overline{M_f}\),
	since at least one of \(e,f,i,j\) is not identically zero.

	So we may assume that \(b=0\) and similarly \(c=0\).
	If \(a\neq d\) then \(e=i=0\) and hence \(f=j=0\),
	which is also not the case.
	Thus \(a=d\).
	Then if \(e=0\), one has \(f=j=0\).
	If \(i\neq 0\) then \(a=d=0\) and \(k,l\) are free.
	Similar for \(i=0\).

	Now we may also assume that \(e\neq 0\) and \(i\neq 0\).
	Then \(a=d\) and
	\[
		\begin{cases}
			f=ae, \\
			j=ai,
		\end{cases}
		\quad \text{and} \qquad
		\begin{cases}
			e+ai=0, \\
			g+ak=0, \\
			h+al=0.
		\end{cases}
	\]
	This is a 4-dimensional component with free variables \(a,i,k,l\).

	Therefore, locally the component \(\overline{M_f}\) is defined via
	\(V(b,c,a-d,e-ai,f+a^2i,g-ak,h-al,j-ai)\), which is smooth of dimension \(4\).
\end{proof}

Now we can prove the main theorem in this section.

\begin{theorem}\label{thm:main_negative_curve}
	Let \(C\) be a smooth rational curve with self-intersection \(-d\) on \(Z\) where \(d\geq 2\).
	Then the moduli space \(M_Z(2,2C,-2d)\) of
	sheaf stable pairs with the fixed Chen character
	has two smooth irreducible components,
	one is isomorphic to \(M_1=\mathbb{P}_{\mathbb{P}^1}(\mathcal{O}(2)^{\oplus d+1}\oplus\mathcal{O})\),
	the other is isomorphic to \(M_2=\Quot(\mathcal{O}_{C}^2,d)\);
	they intersects along \(\mathbb{P}^d\times\mathbb{P}^1\),
	which is the section at infinity of the \(\mathbb{P}^{d+1}\)-bundle \(M_1\to\mathbb{P}^1\).
\end{theorem}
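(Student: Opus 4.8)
The plan is to take the two components to be $M_1:=\overline{M_f}$ and $M_2:=M_f'$, where $M_f$ and $M_f'$ are the locally free and non-locally-free loci from \zcref{thm:M_f_theta_p}. Since the locally free locus is open in any family, $M_f$ is open and $M_f'$ is closed, so $M_2=M_f'$ already equals its own closure. By \zcref{thm:M_f_theta_p} (together with \zcref{prop:M_f'_and_quot,prop:theta_d_and_quot}) there is a set-theoretic decomposition
\[
	M_Z(2,2C,-2d)=M_f\sqcup M_f',\qquad M_f'\simeq\Quot(\mathcal{O}_C^2,d),
\]
and \zcref{lem:limit_of_gamma} gives $\overline{M_f}\setminus M_f=\Gamma$, with $\Gamma\simeq\mathbb{P}^1\times\mathbb{P}^d$ by \zcref{lem:structure_of_gamma} and $\Gamma\subseteq M_f'$.

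First I would identify $M_2$. On $C\simeq\mathbb{P}^1$ the scheme $\Quot(\mathcal{O}_C^2,d)$ parametrises length-$d$ torsion quotients of a trivial rank-two bundle on a smooth curve; its tangent space is $\Hom(\mathcal{K},\mathcal{Q})$ and the obstruction space $\Ext^1(\mathcal{K},\mathcal{Q})=H^1(C,\mathcal{K}^{\vee}\otimes\mathcal{Q})$ vanishes because $\mathcal{K}$ is locally free and $\mathcal{Q}$ is $0$-dimensional. Hence $M_2=\Quot(\mathcal{O}_C^2,d)$ is smooth and irreducible of dimension $2d$.

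Next I would identify $M_1$, following the argument of \zcref{thm:moduli_space_d1} verbatim. By \zcref{lem:quotient_to_o2c} the open locus $M_f$ is the total space of $\mathcal{O}_{\mathbb{P}^1}(-2)^{\oplus(d+1)}$, hence smooth and irreducible of dimension $d+2$, and by \zcref{lem:neibor_Gamma_2} the moduli space is smooth along the $(d+2)$-dimensional $M_f$-component at every point of $\Gamma$; therefore $M_1=\overline{M_f}$ is a smooth projective compactification of the total space of $\mathcal{O}(-2)^{\oplus(d+1)}$ whose boundary divisor is $\Gamma\simeq\mathbb{P}^1\times\mathbb{P}^d$. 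The projective bundle $P:=\mathbb{P}_{\mathbb{P}^1}(\mathcal{O}(2)^{\oplus(d+1)}\oplus\mathcal{O})$ is another such compactification, the complement of its divisor at infinity $\mathbb{P}_{\mathbb{P}^1}(\mathcal{O}(2)^{\oplus(d+1)})\simeq\mathbb{P}^1\times\mathbb{P}^d$ being exactly this total space. Invoking the uniqueness of the smooth compactification as in \zcref{thm:moduli_space_d1} (via \cite[Proposition~(8.4.2)]{grothendieck1961elements}) gives $M_1\simeq P$, and under this isomorphism $\Gamma$ is carried to the divisor at infinity of the $\mathbb{P}^{d+1}$-bundle $M_1\to\mathbb{P}^1$.

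Finally I would assemble the components. Since $\Gamma\subseteq M_f'=M_2$ we get $M_Z(2,2C,-2d)=M_f\cup M_f'=M_1\cup M_2$, with $M_1,M_2$ irreducible and smooth. From $M_f\cap M_f'=\emptyset$ one computes $M_1\cap M_2=(M_f\cup\Gamma)\cap M_f'=\Gamma\simeq\mathbb{P}^1\times\mathbb{P}^d$. Neither contains the other: $M_f\subseteq M_1$ is disjoint from $M_2$, while $M_2\setminus\Gamma$ is nonempty (as $\dim M_2=2d>d+1=\dim\Gamma$) and disjoint from $M_1$; hence $M_1$ and $M_2$ are exactly the irreducible components. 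I expect the main obstacle to be the identification $M_1\simeq P$: knowing $M_1$ is \emph{some} smooth compactification with boundary $\mathbb{P}^1\times\mathbb{P}^d$ does not by itself pin down the bundle, so the real work is to check that the affine-bundle projection $M_f\to\mathbb{P}^1$ extends to a $\mathbb{P}^{d+1}$-bundle structure on $M_1$ matching that of $P$ (equivalently, that the normal bundle of $\Gamma$ in $M_1$ agrees with that of the section at infinity in $P$), which is what legitimises the uniqueness step.
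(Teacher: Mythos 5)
Your proposal follows essentially the same route as the paper's proof: decompose the moduli space as $M_f\sqcup M_f'$, identify $\overline{M_f}$ as a smooth compactification of the total space of $\mathcal{O}(-2)^{\oplus(d+1)}$ via \zcref{lem:quotient_to_o2c,lem:limit_of_gamma,lem:neibor_Gamma_2}, invoke the uniqueness of smooth compactifications to conclude $\overline{M_f}\simeq\mathbb{P}_{\mathbb{P}^1}(\mathcal{O}(2)^{\oplus(d+1)}\oplus\mathcal{O})$, and intersect the two components along $\Gamma$. Your added remarks (the unobstructedness argument for smoothness of $\Quot(\mathcal{O}_C^2,d)$, and the caveat that the uniqueness-of-compactification step really requires matching the boundary's normal bundle) are sound refinements of the same argument.
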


\begin{proof}
	By \zcref{prop:M_f'_and_quot}, we have
	\(M_Z(2,2C,-2d)=M_f\sqcup\Quot(\mathcal{O}_C^2, d)\).
	We denote the closure of \(M_f\) in \(M_Z(2,2C,-2d)\) by \(\overline{M_f}\) which gives a
	compactification of \(M_f\).
	By \zcref{lem:limit_of_gamma}, we have \(\overline{M_f}=M_f\sqcup\Gamma\).
	By \zcref{lem:quotient_to_o2c}, \(M_f\) is a smooth variety.
	Also by \zcref{lem:neibor_Gamma_2}, for any point in \(\Gamma\),
	it has a smooth neighbourhood in \(\overline{M_f}\).
	Hence \(\overline{M_f}=M_f\sqcup\Gamma\) is a smooth compactification of \(M_f\).

	By \zcref{lem:quotient_to_o2c}, \(M_f\) is the total space of \(\mathcal{O}(-2)^{\oplus d+1}\),
	i.e., a \(\mathbb{A}^{d+1}\)-bundle over \(\mathbb{P}^1\).
	The projectivisation
	\(\mathbb{P}_{\mathbb{P}^1}(\mathcal{O}(2)^{\oplus {d+1}}\oplus \mathcal{O})\) is also a smooth compactification
	of \(M_f\) by gluing \(\mathbb{P}^{d}\times \mathbb{P}^1\) at infinity.
	Since the smooth compactification is unique by \cite[Proposition~8.4.2]{grothendieck1961elements},
	we get \(\overline{M_f}=\mathbb{P}_{\mathbb{P}^1}(\mathcal{O}(2)^{\oplus d+1}\oplus \mathcal{O})\)
	and \(\Gamma=\mathbb{P}^{d}\times \mathbb{P}^1\).
	Now \(M_Z(2,2C,-2d)=\overline{M_f}\cup\Quot(\mathcal{O}_C^2,d)\) consists of two components,
	the intersection of the two components \(\overline{M_f}\cap\Quot(\mathcal{O}_C^2,d)=\Gamma\).
\end{proof}


\printbibliography

@book{Alexeev1996,
	author        = {V. Alexeev},
	title         = {Moduli spaces $M_{g,n}(W)$ for surfaces, Higher-dimensional complex varieties},
	date          = {1996},
	publisher     = {de Gruyter, Berlin},
	location      = {(Trento, 1994)},
}

@online{Alexeev96,
	author        = {Alexeev, V.},
	title         = {Log canonical singularities and complete moduli of stable pairs},
	eprint        = {alg-geom/9608013},
	date          = {1996},
	eprinttype    = {arxiv},
	pubstate      = {preprint},
}

@book{fantechi2006fundamental,
	author        = {Fantechi, Barbara and G\"{o}ttsche, Lothar and Illusie, Luc and Kleiman, Steven and Nitsure, Nitin and Vistoli, Angelo},
	title         = {Fundamental Algebraic Geometry},
	date          = {2006-10-12},
	series        = {Mathematical Surveys and Monographs},
	volume        = {123},
	publisher     = {American Mathematical Society},
	location      = {Providence, Rhode Island},
}

@book{barth2004compact,
	author        = {Barth, Wolf P. and Hulek, Klaus and Peters, Chris A. M. and Ven, Antonius},
	title         = {Compact Complex Surfaces},
	date          = {2004},
	publisher     = {Springer Berlin Heidelberg},
	location      = {Berlin, Heidelberg},
	label         = {BHPV},
}

@online{birkar2022moduli,
	author        = {Birkar, Caucher},
	title         = {Moduli of Algebraic Varieties},
	eprint        = {2211.11237},
	date          = {2022-11-21},
	eprinttype    = {arxiv},
	eprintclass   = {math.AG},
	pubstate      = {preprint},
}

@online{birkar2024sheaf,
	author        = {Caucher Birkar and Jia Jia and Artan Sheshmani},
	title         = {Sheaf stable pairs, Quot-schemes, and birational geometry},
	eprint        = {2406.00230},
	date          = {2024},
	eprinttype    = {arxiv},
	eprintclass   = {math.AG},
	pubstate      = {preprint},
}

@article{BirkarCasciniHaconMcKernan2010,
	author        = {C. Birkar and P. Cascini and C. Hacon and  J. M$^c$Kernan},
	title         = {Existence of minimal models for varieties of log general type},
	volume        = {23},
	number        = {2},
	pages         = {405--468},
	date          = {2010},
	journaltitle  = {J. Amer. Math. Soc.},
	label         = {BCHM},
}

@article{Brambila-Paz2008,
	author        = {Brambila-Paz, L.},
	title         = {Non-emptiness of moduli spaces of coherent systems.},
	volume        = {19},
	pages         = {779--799},
	date          = {2008},
	journaltitle  = {Int. J. Math.},
}

@article{DeligneMumford1969,
	author        = {P. Deligne and D. Mumford},
	title         = {The irreducibility of the space of curves of given genus.},
	pages         = {75--109},
	date          = {1969},
	number        = {36},
	journaltitle  = {Inst. Hautes \'{E}tudes Sci. Publ. Math.},
}

@book{friedman1998algebraic,
	author        = {Friedman, Robert},
	title         = {Algebraic Surfaces and Holomorphic Vector Bundles},
	date          = {1998},
	series        = {Universitext},
	publisher     = {Springer New York},
	location      = {New York, NY},
}

@book{fulton1998intersection,
	author        = {Fulton, William},
	title         = {Intersection Theory},
	publisher     = {Springer},
	date          = {1998},
	edition       = {2},
	location      = {New York},
}

@article{grothendieck1961elements,
	author        = {Grothendieck, Alexander},
	year          = {1961},
	title         = {\'El\'ements de g\'eom\'etrie alg\'ebrique : II. {\'Etude} globale \'el\'ementaire de quelques classes de morphismes},
	journal       = {Publications Math\'ematiques de l'IH\'ES},
	pages         = {5--222},
	publisher     = {Institut des Hautes Etudes Scientifiques},
	volume        = {8},
}

@article{Gieseker1977,
	author        = {Gieseke, D.},
	year          = {1977},
	title         = {On the moduli of vector bundles on an algebraic surface},
	journal       = {Ann. Math.},
	volume        = {106},
	date          = {1977},
	pages         = {45--60},
}

@book{hartshorne1977algebraic,
	author        = {Hartshorne, Robin},
	title         = {Algebraic Geometry},
	publisher     = {Springer-Verlag},
	series        = {Graduate Texts in Mathematics},
	volume        = {52},
	date          = {1977},
	location      = {New York},
}

@book{huybrechts2010geometry,
	author        = {Huybrechts, Daniel and Lehn, Manfred},
	title         = {The Geometry of Moduli Spaces of Sheaves},
	publisher     = {Cambridge University Press},
	date          = {2010-05-27},
	edition       = {2},
}

@article{HaconMcKernanXu2018,
	author        = {Hacon, C. D. and M$^c$Kernan, J. and Xu, C.},
	title         = {Boundedness of moduli of varieties of general type},
	volume        = {20},
	pages         = {865--901},
	date          = {2018},
	journaltitle  = {J. Eur. Math. Soc.},
}

@book{kollar1996rational,
	author        = {Koll\'{a}r, J\'{a}nos},
	title         = {Rational Curves on Algebraic Varieties},
	date          = {1996},
	publisher     = {Springer},
	location      = {Berlin, Heidelberg},
}

@book{kollar2022,
	author        = {Koll\'{a}r, J\'{a}nos},
	title         = {Families of varieties of general type.},
	date          = {2022},
	publisher     = {Cambridge University Press},
}

@article{KollarShepherd-Barron1988,
	author        = {Koll\'{a}r, J. and Shepherd-Barron, N. I.},
	title         = {Threefolds and deformations of surface singularities},
	volume        = {91},
	number        = {2},
	pages         = {299--338},
	date          = {1988},
	journaltitle  = {Invent. Math.},
}

@book{Kontsevich1995,
	author        = {Kontsevich, M.},
	title         = {Enumeration of rational curves via torus actions, in The moduli space of curves},
	date          = {1995},
	publisher     = {R. Dijkgraaf, C. Faber, and G. van der Geer},
	location      = {Birkhauser},
}

@book{mumford1994geometric,
	author        = {Mumford, David and Fogarty, John and Kirwan, Frances},
	title         = {Geometric Invariant Theory},
	date          = {1994},
	series        = {Ergebnisse Der Mathematik Und Ihrer Grenzgebiete},
	edition       = {third enlarged edition},
	number        = {34},
	publisher     = {Springer},
	location      = {Berlin Heidelberg},
	pagetotal     = {294},
}

@article{fujino2012minimal,
	author        = {Fujino, Osamu},
	title         = {Minimal Model Theory for Log Surfaces},
	date          = {2012},
	journaltitle  = {Publications of The Research Institute for Mathematical Sciences},
	shortjournal  = {Publ. Res. I. Math. Sci.},
	pages         = {339--371},
}

@incollection{maruyama1982elementary,
	author        = {Maruyama, Masaki},
	title         = {Elementary Transformations in the Theory of Algebraic Vector Bundles},
	editor        = {Aroca, Jos\'{e} Manuel and Buchweitz, Ragnar and Giusti, Marc and Merle, Michel},
	booktitle     = {Algebraic Geometry},
	publisher     = {Springer Berlin Heidelberg},
	volume        = {961},
	pages         = {241--266},
	date          = {1982},
	location      = {Berlin, Heidelberg},
}

@article{Maruyama1977,
	author        = {Maruyama, Masaki},
	year          = {1977},
	title         = {Moduli of stable sheaves I},
	journal       = {J. Math. Kyoto Univ.},
	volume        = {17},
	date          = {1977},
	pages         = {91--126},
}

@article{Maruyama1978,
	author        = {Maruyama, Masaki},
	year          = {1978},
	title         = {Moduli of stable sheaves II},
	journal       = {J. Math. Kyoto Univ.},
	volume        = {18},
	date          = {1978},
	pages         = {557--614},
}

@article{Mumford1962,
	author        = {Mumford, D.},
	title         = {Projective invariants of projective structures and applications},
	pages         = {526--630},
	date          = {1962},
	journaltitle  = {Proc. Int. Cong. Math. Stockholm},
}

@article{pandharipande2009curve,
	author        = {Pandharipande, R. and Thomas, R. P.},
	title         = {Curve Counting via Stable Pairs in the Derived Category},
	volume        = {178},
	number        = {2},
	pages         = {407--447},
	date          = {2009-11},
	journaltitle  = {Inventiones mathematicae},
	shortjournal  = {Invent. math.},
}

@book{lepotier1993faisceaux,
	author        = {Le Potier, Joseph},
	year          = {1993},
	title         = {Faisceaux semi-stables et syst\'emes coh\'erents},
	shortauthor   = {LeP},
	publisher     = {Press, Cambridge},
	volume        = {London Math. Soc. Lecture Note Ser, vol 288},
	issue         = {Vector bundles in algebraic geometry},
}

@book{lepotier1993systemes,
	author        = {Le Potier, Joseph},
	title         = {Syst\`{e}mes coh\'{e}rents et structures de niveau},
	shortauthor   = {LeP},
	publisher     = {Soci\'et\'e math\'ematique de France},
	series        = {Ast\'erisque},
	number        = {214},
	date          = {1993},
}

@article{lepotier1993faisceauxb,
	author        = {Le Potier, Joseph},
	year          = {1993},
	title         = {Faisceaux semi-stables de dimension 1 sur le plan projectif},
	shortauthor   = {LeP},
	journal       = {Rev. Roumaine Math. Pures Appl.},
	fjournal      = {Revue Roumaine de Math\'ematiques Pures et Appliqu\'ees. Romanian Journal of Pure and Applied Mathematics},
	volume        = {38},
	number        = {7-8},
	pages         = {635--678},
	issn          = {0035-3965},
	mrclass       = {14D20 (14F05 14J60)},
	mrnumber      = {1263210},
	mrreviewer    = {Jean\ D'Almeida},
}

@article{sheshmani2016higher,
	author        = {Sheshmani, Artan},
	title         = {Higher Rank Stable Pairs and Virtual Localization},
	volume        = {24},
	number        = {1},
	pages         = {139--193},
	date          = {2016},
	journaltitle  = {Communications in Analysis and Geometry},
}

@article{Seshadri1967,
	author        = {Seshadri, C.},
	title         = {Space of unitary vector bundles on a compace Riemann surface},
	volume        = {85},
	pages         = {303--336},
	date          = {1967},
	journaltitle  = {Ann. Math.},
}

@article{Thaddeus1994,
	author        = {Thaddeus, M.},
	title         = {Stable pairs, linear systems and the Verlinde formula},
	volume        = {117},
	pages         = {317--353},
	date          = {1994},
	journaltitle  = {Invent. Math.},
}

@misc{stacks-project,
	author        = {The Stacks Project Authors},
	year          = {2024},
	title         = {Stacks Project},
	shorthand     = {Stacks},
	howpublished  = {\url{https://stacks.math.columbia.edu}},
}

@book{kawamata2024algebraic,
	author        = {Kawamata, Yujiro},
	title         = {Algebraic Varieties: Minimal Models and Finite Generation},
	translator    = {Jiang, Chen},
	date          = {2024-06-30},
	edition       = {1},
	publisher     = {Cambridge University Press},
}

@article{yuan2018motivic,
	author        = {Yuan, Yao},
	title         = {Motivic Measures of Moduli Spaces of 1-Dimensional Sheaves on Rational Surfaces},
	date          = {2018-05},
	journaltitle  = {Communications in Contemporary Mathematics},
	shortjournal  = {Commun. Contemp. Math.},
	volume        = {20},
	number        = {3},
	pages         = {1750019},
}

@article{yang2003coherent,
	author        = {Yang, Jin-Gen},
	title         = {Coherent Sheaves on a Fat Curve},
	date          = {2003},
	journaltitle  = {Japanese journal of mathematics. New series},
	shortjournal  = {Jpn. j. math., New ser.},
	volume        = {29},
	number        = {2},
	pages         = {315--333},
}

\end{document}